\begin{document}
	\onehalfspacing
	
	\title{A characteristic map for the holonomy groupoid of a foliation} 
	\author{Lachlan E. MacDonald\\School of Mathematics and Applied Statistics\\
		University of Wollongong\\
		Northfields Ave, Wollongong, NSW, 2522}

	\date{November 2019}
	
	\maketitle
	
	\begin{abstract}
		We prove a generalisation of Bott's vanishing theorem for the full transverse frame holonomy groupoid of any transversely orientable foliated manifold.  As a consequence we obtain a characteristic map encoding both primary and secondary characteristic classes.  Previous descriptions of this characteristic map are formulated for the Morita equivalent \'{e}tale groupoid obtained via a choice of complete transversal. By working with the full holonomy groupoid we obtain novel geometric representatives of characteristic classes.  In particular we give a geometric, non-\'{e}tale analogue of the codimension 1 Godbillon-Vey cyclic cocycle of Connes and Moscovici in terms of path integrals of the curvature form of a Bott connection.
	\end{abstract}

\section{Introduction}

Characteristic classes for foliation groupoids have been studied in an \'{e}tale context in \cite{cyctrans,hopf1,goro1,diffcyc,goro2,crainic1,moscorangi, backindgeom,mosc1}.  In this paper, we give analogous constructions in the context of the non-\'{e}tale, or ``full", holonomy groupoid of a foliation.  In working with such groupoids we stay close to Bott's Chern-Weil construction of the characteristic classes of a foliation \cite{bott2}, and obtain a novel, global geometric interpretation of the Godbillon-Vey cyclic cocycle of Connes and Moscovici \cite[Proposition 19]{backindgeom}.

Associated to any (regular) foliated manifold $(M,\FF)$, of codimension $q$, with leafwise tangent bundle $T\FF$ and normal bundle $N=TM/T\FF$, are characteristic classes associated to $N$ living in the de Rham cohomology $H^{*}_{dR}(M)$ of $M$.  Among these classes are the usual Pontryagin classes for the real vector bundle $N$, as well as certain \emph{secondary classes}, such as the Godbillon-Vey invariant, which is tied to the dynamical behaviour of the foliation $\FF$ \cite{hh}.

Classically, representatives of these classes are obtained either via Gelfand-Fuks cohomology \cite{bott3} or via Chern-Weil theory \cite{bott2}.  The Chern-Weil theory in particular makes critical use of connections on $N$ that, in any foliated coordinate system, coincide with the trivial connection on $N$ along leaves. Such connections are called \emph{Bott connections}, named after their originator R. Bott \cite{bott1}.  Bott connections give rise to \emph{Bott's vanishing theorem}, which states that elements of degree greater than $2q$ in the Pontryagin ring of $N$ must vanish.  It is precisely this vanishing phenomenon that guarantees the existence of the secondary classes.

To any such foliated manifold $(M,\FF)$ is associated its \emph{holonomy groupoid} $\GG$.  As a space, $\GG$ may be thought of as a quotient of the space of all smooth paths in leaves of $\FF$, with two such paths identifying in $\GG$ if and only if their parallel transport maps, defined with respect to any Bott connection on $N$, coincide.  The holonomy groupoid $\GG$ carries a natural, locally Hausdorff differentiable structure, and is thus a Lie groupoid \cite{wink}.  Moreover the normal bundle $N$ carries a natural action of $\GG$, and one may therefore be interested in extending the ``static" characteristic classes appearing in $H^{*}_{dR}(M)$ to ``dynamic" characteristic classes for the holonomy groupoid $\GG$.

In order to obtain such classes, it has become standard practice in the literature \cite{cyctrans,hopf1,goro1,diffcyc,goro2,crainic1,moscorangi, backindgeom,mosc1} to ``\'{e}talify" the holonomy groupoid as follows.  Letting $q$ be the codimension of $(M,\FF)$, one takes any $q$-dimensional submanifold $\TT\subset M$ which intersects each leaf of $\FF$ at least once, and which is everywhere transverse to $\FF$ in the sense that $T_{x}\TT\oplus T_{x}\FF = T_{x}M$ for all $x\in \TT$.  Such a submanifold is called a \emph{complete transversal} for $(M,\FF)$.  Having chosen such a complete transversal $\TT$ we consider the subgroupoid
\[
\GG^{\TT}_{\TT}:=\{u\in\GG:r(u),s(u)\in\TT\}
\]
of $\GG$.  The subgroupoid $\GG^{\TT}_{\TT}$ inherits from $\GG$ a differential topology for which it is a (generally non-Hausdorff) \emph{\'{e}tale Lie groupoid} \cite[Lemma 2]{crainic2} - that is, a Lie groupoid whose range (and therefore source) are local diffeomorphisms.  For any choice of complete transversal $\TT$, the $C^{*}$-algebras of the groupoids $\GG$ and $\GG_{\TT}^{\TT}$ are Morita equivalent \cite{hs2,ben7}, so are the same as far as $K$-theory is concerned.  Moreover, the groupoids $\GG$ and $\GG^{\TT}_{\TT}$ are themselves Morita equivalent \cite[Lemma 2]{crainic2}, and consequently they are (co)homologically identical \cite{crainic2, crainic1} also.

This paper provides an analogous characteristic map to those defined in \cite{diffcyc,crainic1} in the context of the \emph{full} holonomy groupoid $\GG$ of a foliated manifold.  Our characteristic map will be constructed in a Chern-Weil fashion from Bott connections for $N$, staying as close as possible to the classical geometric approach.  Section \ref{sc1} consists of necessary background on differential graded algebras, Weil algebras, classical characteristic classes for foliations, and Chern-Weil theory for Lie groupoids as developed in \cite{chgpd}.  This background material suffices to access the Pontryagin classes of $N$, for which a choice of connection form $\alpha$ on the positively oriented frame bundle $\Fr^{+}(N)$ of $N$ determines a characteristic map $\psi^{\GG}_{\alpha}$ sending the invariant polynomials on $\mathfrak{gl}(q,\RB)$ to the de Rham cohomology $\Omega^{*}(\GG^{(*)}_{1})$ of the action groupoid $\GG_{1}:=\GG\ltimes\Fr^{+}(N)$.

In order to access the \emph{secondary classes} of the foliation, such as the Godbillon-Vey invariant, in Section \ref{sc2} we introduce and prove the following generalisation of Bott's vanishing theorem.

\begin{thm}\label{bottintro}
	Let $\alpha^{\flat}$ be the connection form associated to a Bott connection on $N$.  Then the image of $\psi^{\GG}_{\alpha^{\flat}}$ in $\Omega^{*}(\GG_{1}^{(*)})$ vanishes in total degree greater than $2q$.
\end{thm}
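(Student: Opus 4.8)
The strategy is to extend the classical mechanism behind Bott's theorem---that the curvature of a Bott connection lies in the ideal generated by the transverse $1$-forms of $\FF$, an ideal that is nilpotent of order $q+1$---from $M$ to every level of the nerve of $\GG_{1}$. Recall from Section~\ref{sc1} (following \cite{chgpd}) that for a connection form $\alpha$ on $\pi\colon\Fr^{+}(N)\to M$ the characteristic map is computed on the $n$-th level of the nerve by the simplicial Chern--Weil (Dupont/Bott--Shulman--Stasheff) formula
\[
\psi^{\GG}_{\alpha}(P)\big|_{\GG_{1}^{(n)}}=\int_{\Delta^{n}}P(\Omega_{\alpha_{n}}),\qquad \alpha_{n}:=\sum_{i=0}^{n}t_{i}\,\varepsilon_{i}^{*}\alpha,\quad \Omega_{\alpha_{n}}:=d\alpha_{n}+\tfrac12[\alpha_{n},\alpha_{n}],
\]
where $\varepsilon_{0},\dots,\varepsilon_{n}\colon\GG_{1}^{(n)}\to\Fr^{+}(N)$ are the vertex maps and the $t_{i}$ are barycentric coordinates on $\Delta^{n}$. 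Since an invariant polynomial $P$ homogeneous of degree $k$ is sent to an element of total degree $2k$ (the level-$0$ contribution being the usual Chern--Weil form $P(\Omega_{\alpha})$ on $\Fr^{+}(N)$), the theorem is equivalent to $\psi^{\GG}_{\alpha^{\flat}}(P)=0$ whenever $k>q$.

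First I would record the foliation data. The flat partial connection along $T\FF$ lifts $\FF$ to a codimension-$q$ foliation $\hat\FF$ of $\Fr^{+}(N)$ whose leaves are exactly the orbits of the $\GG$-action; because holonomy transport carries $\hat\FF$-leaves to $\hat\FF$-leaves and $\FF$-leaves to $\FF$-leaves, all $n+1$ vertex maps pull $\hat\FF$, respectively $\pi^{*}\FF$, back to one and the same codimension-$q$ foliation $\hat\FF_{n}$, respectively $\FF_{n}$, on $\GG_{1}^{(n)}$. Let $\mathcal I_{n}:=(\pi\circ\varepsilon_{0})^{*}I(\FF)\cdot\Omega^{*}(\GG_{1}^{(n)})$ be the conormal ideal of $\FF_{n}$, where $I(\FF)\subset\Omega^{*}(M)$ is generated by $1$-forms annihilating $T\FF$; as $\mathcal I_{n}$ is pulled back from $M$ and $I(\FF)^{q+1}=0$, we have $\mathcal I_{n}^{\,q+1}=0$.

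The heart of the argument is to show that for $\alpha=\alpha^{\flat}$ every local monomial of $\Omega_{\alpha_{n}}$ carries at least one $\FF_{n}$-transverse $1$-form. This rests on two ``Bott-type'' facts. \emph{(a)} The curvature $\Omega_{\alpha^{\flat}}$ on $\Fr^{+}(N)$ lies in $\pi^{*}I(\FF)\cdot\Omega^{*}(\Fr^{+}(N))$: this is the frame-bundle form of Bott's curvature identity, using that $\Omega_{\alpha^{\flat}}$ is $\pi$-semibasic and leafwise flat, and it gives $\varepsilon_{i}^{*}\Omega_{\alpha^{\flat}}\in\mathcal I_{n}$. \emph{(b)} Each difference $\alpha_{0i}:=\varepsilon_{i}^{*}\alpha^{\flat}-\varepsilon_{0}^{*}\alpha^{\flat}$ lies in $\mathcal I_{n}\otimes\mathfrak{gl}(q,\RB)$: it vanishes on the $GL^{+}(q,\RB)$-fundamental vector fields (the vertex maps being equivariant, so their pullbacks of $\alpha^{\flat}$ agree there) and on $T\hat\FF_{n}$ (since $\alpha^{\flat}$ vanishes on $T\hat\FF$ and each vertex map is foliate), and $T\hat\FF_{n}$ together with the fundamental fields spans $T\FF_{n}$. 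Substituting (a) and (b) into the structure equation and using $\sum_{i}t_{i}=1$, $\sum_{i}dt_{i}=0$ yields
\[
\Omega_{\alpha_{n}}=\sum_{i}dt_{i}\wedge\alpha_{0i}+\sum_{i}t_{i}\,\varepsilon_{i}^{*}\Omega_{\alpha^{\flat}}-\tfrac12\sum_{i,j}t_{i}t_{j}\,[\,\varepsilon_{i}^{*}\alpha^{\flat},\,\alpha_{0i}-\alpha_{0j}\,],
\]
in which every term has an $\FF_{n}$-transverse factor. Hence, for $P$ invariant of degree $k>q$, the form $P(\Omega_{\alpha_{n}})$---a wedge of $k$ copies of $\Omega_{\alpha_{n}}$---would require more than $q$ independent transverse $1$-forms on $\GG_{1}^{(n)}$ and so vanishes identically on $\Delta^{n}\times\GG_{1}^{(n)}$; integrating over $\Delta^{n}$ gives $\psi^{\GG}_{\alpha^{\flat}}(P)|_{\GG_{1}^{(n)}}=0$ for every $n$, which is the assertion.

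I expect the main obstacle to be precisely the structural points (a) and (b): verifying that the lifted foliations are compatible with \emph{all} vertex maps of the (only locally Hausdorff) nerve $\GG_{1}^{(\bullet)}$, and, above all, the phenomenon absent from Bott's original argument---which has no higher simplices---that the pullbacks of one Bott connection along different vertex maps differ only by a form transverse to $\FF_{n}$. Once these are in place, the passage through the simplicial Chern--Weil formula and the final degree count run exactly as in the classical proof.
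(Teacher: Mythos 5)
Your argument is correct, and it reaches the same two pillars as the paper's proof --- the simplicial Chern--Weil formula of Lemma \ref{con1}/Construction \ref{pki}, and the fact that every factor of the simplicial curvature of a Bott connection carries a transverse $1$-form --- but it organizes the second pillar invariantly rather than by the paper's explicit coordinate expansion. The paper works in a foliated chart on $\GG_{1}^{(k)}$, writes each $(p^{k}_{i})^{*}\alpha^{\flat}$ as a transverse local form plus one and the same Maurer--Cartan term, cancels the Maurer--Cartan contributions by hand using $\sum t_{i}=1$ and $d\omega^{MC}+\omega^{MC}\wedge\omega^{MC}=0$, and then counts $dz^{i}$'s summand by summand (separating the $k$ factors needed to survive integration over $\Delta^{k}$ from the remaining $\deg(P)-k$). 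You instead isolate two coordinate-free lemmas: (a) the Bott curvature lies in the conormal ideal of $\FF$ pulled back to $\Fr^{+}(N)$, and (b) the differences of vertex pullbacks of $\alpha^{\flat}$ lie in the conormal ideal on $\GG_{1}^{(n)}$, whence $P(\Omega_{\alpha_{n}})$ lies in the $(q{+}1)$-fold power of a locally rank-$q$ ideal and vanishes identically, even before integrating over the simplex. Your fact (b) is exactly the observation the paper proves separately later (Proposition \ref{integralprop1}, for the $n=1$ face maps $r,s$: the integrated curvature $R^{\GG}=\partial\alpha^{\flat}$ annihilates the range/source-fibre directions and the vertical directions), but does not isolate inside the vanishing proof; there it is extracted implicitly from the coordinate formula. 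The trade-off is that your route front-loads the verification that the lifted foliation and its conormal ideal are compatible with \emph{all} face maps of the (locally Hausdorff) nerve --- which does hold, since holonomy diffeomorphisms of the transversal preserve the ideal of transverse forms --- in exchange for a cleaner final step; the paper's coordinate computation is, in effect, the proof of your (a) and (b) carried out simultaneously. One cosmetic caveat: as in Theorem \ref{bottvan2}, the statement to be proved concerns the image of the invariant-polynomial subalgebra $\RB[c_{1},\dots,c_{q}]$ in degree $>2q$ (the $h_{i}$'s do not vanish), which is indeed how you have read it.
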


The vanishing of certain cocycles implied by Theorem \ref{bottintro} enables us to refine $\psi_{\alpha^{\flat}}$ to a characteristic map of the truncated Weil algebra so as to obtain secondary classes, in a manner entirely analogous to the classical setting.  More precisely we have the following theorem.

\begin{thm}\label{charintro}
	Let $\underline{WO}_{q}$ denote the truncated Weil algebra.  If $\alpha^{\flat}$ is the connection form associated to any Bott connection on $N$, then we obtain a characteristic map
	\[
	\psi^{\GG}_{\alpha^{\flat}}:\underline{WO}_{q}\rightarrow\Omega^{*}(\GG^{(*)}_{1}/\SO(q,\RB)).
	\]
\end{thm}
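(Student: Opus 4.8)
The plan is to obtain $\psi^{\GG}_{\alpha^{\flat}}$ by first extending the primary characteristic map of Section \ref{sc1} to a homomorphism of differential graded algebras out of the $\SO(q,\RB)$-basic part of the full Weil algebra, and then using Theorem \ref{bottintro} to show that this extension kills the differential ideal by which $\underline{WO}_{q}$ is a quotient. Indeed, the Chern--Weil theory for Lie groupoids recalled in Section \ref{sc1} (following \cite{chgpd}) produces from the connection form $\alpha^{\flat}$ not merely the map on invariant polynomials but a homomorphism of differential graded algebras
\[
\Phi_{\alpha^{\flat}}\colon W(\mathfrak{gl}(q,\RB))\longrightarrow\Omega^{*}(\GG^{(*)}_{1}),
\]
out of the Weil algebra of $\mathfrak{gl}(q,\RB)$, sending the degree-one generators to the components of the (simplicially corrected) connection form and the degree-two generators to those of its curvature, and restricting on the subalgebra of invariant polynomials to $\psi^{\GG}_{\alpha^{\flat}}$.

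First I would establish the requisite $\SO(q,\RB)$-equivariance. The structure group of $\Fr^{+}(N)$, and in particular its subgroup $\SO(q,\RB)$, acts on $\Fr^{+}(N)$ on the right commuting with the action of $\GG$, the latter being induced from the $\GG$-action on $N$ and hence fibrewise over $M$; so $\SO(q,\RB)$ acts on the action groupoid $\GG_{1}=\GG\ltimes\Fr^{+}(N)$ by groupoid automorphisms, hence simplicially on the nerve $\GG^{(*)}_{1}$, with free and proper action in each simplicial degree. Because a principal connection form reproduces the fundamental vector fields of its structure group and has horizontal curvature, one checks that in each simplicial degree $\Phi_{\alpha^{\flat}}$ intertwines the Cartan operations --- the $\SO(q,\RB)$-action together with contraction by $\mathfrak{so}(q,\RB)$ --- on $W(\mathfrak{gl}(q,\RB))$ with the corresponding operations on $\Omega^{*}(\GG^{(*)}_{1})$. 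It therefore restricts to the basic subcomplexes and descends to a homomorphism of differential graded algebras
\[
\Phi_{\alpha^{\flat}}\colon W(\mathfrak{gl}(q,\RB))_{\SO(q,\RB)}\longrightarrow\Omega^{*}(\GG^{(*)}_{1}/\SO(q,\RB)),
\]
the target being the $\SO(q,\RB)$-basic forms on the nerve, equivalently the forms on the quotient simplicial manifold $\GG^{(*)}_{1}/\SO(q,\RB)$.

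It then remains to pass to the truncation. The algebra $\underline{WO}_{q}$ is, by definition, the differential graded algebra $\RB[c_{1},\dots,c_{q}]\otimes\Lambda(h_{1},h_{3},\dots)$, with $c_{i}$ of degree $2i$, $h_{i}$ of degree $2i-1$, $dc_{i}=0$ and $dh_{i}=c_{i}$, truncated by the differential ideal $\mathcal{J}$ killing every monomial whose polynomial part has degree exceeding $2q$; it maps canonically to $W(\mathfrak{gl}(q,\RB))_{\SO(q,\RB)}$, sending $c_{i}$ to the $i$-th invariant polynomial and $h_{i}$ to its transgression. Each generator of $\mathcal{J}$ is a monomial in the $c_{i}$ whose degrees sum to more than $q$, hence is (the image of) a homogeneous invariant polynomial $P$ of $\mathfrak{gl}(q,\RB)$ of degree greater than $q$; it is $d$-closed, and by Theorem \ref{bottintro} we have $\Phi_{\alpha^{\flat}}(P)=\psi^{\GG}_{\alpha^{\flat}}(P)=0$, this image sitting in total degree $2\deg P>2q$. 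Since $\Phi_{\alpha^{\flat}}$ is a homomorphism of differential graded algebras this forces $\Phi_{\alpha^{\flat}}(\mathcal{J})=0$, so the composite factors through the quotient, yielding
\[
\psi^{\GG}_{\alpha^{\flat}}\colon\underline{WO}_{q}\longrightarrow\Omega^{*}(\GG^{(*)}_{1}/\SO(q,\RB)),
\]
a homomorphism of differential graded algebras whose restriction to the polynomial part is the Pontryagin characteristic map, and which carries each secondary generator $h_{i}$ to a Chern--Simons transgression form $\Phi_{\alpha^{\flat}}(h_{i})\in\Omega^{2i-1}(\GG^{(*)}_{1}/\SO(q,\RB))$ with $d\,\Phi_{\alpha^{\flat}}(h_{i})=\psi^{\GG}_{\alpha^{\flat}}(c_{i})$; no data beyond $\alpha^{\flat}$ enters.

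I expect the main obstacle to lie not in this formal descent but in the second step: verifying that the groupoid Chern--Weil homomorphism of Section \ref{sc1} genuinely assembles, simplicial degree by simplicial degree, into a morphism of \emph{Cartan} differential graded algebras for the residual $\SO(q,\RB)$-action, so that the Chern--Simons forms representing the images of the $h_{i}$ are honestly $\SO(q,\RB)$-horizontal on each $\GG^{(p)}_{1}$ and thus descend to $\GG^{(*)}_{1}/\SO(q,\RB)$. One must also match the truncation ideal of $\underline{WO}_{q}$ precisely against the vanishing supplied by Theorem \ref{bottintro}, checking in particular that every generator of that ideal --- including any Euler-class generator that must be adjoined when $q$ is even --- maps into total degree exceeding $2q$ and is hence killed. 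Once these compatibilities are in place the remainder of the argument is purely algebraic.
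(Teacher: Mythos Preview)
Your proposal is correct and follows essentially the same route as the paper. The paper packages your steps 1 and 2 into the background Section~\ref{sc1}: Proposition~\ref{dgaofsimp} shows that the simplicial forms $\Omega^{*}_{\Delta}(\GG_{1})$ form a $\GL^{+}(q,\RB)$-differential graded algebra, Lemma~\ref{con1} produces a connection on it from $\alpha^{\flat}$, and Theorem~\ref{weiluni} then gives the $G$-DGA homomorphism from $W(\mathfrak{gl}(q,\RB))$, which by construction restricts to $K$-basic elements for any $K\subset\GL^{+}(q,\RB)$ (this is Theorem~\ref{charmap}). Thus the ``main obstacle'' you anticipate --- checking that the Cartan operations are intertwined simplicial degree by simplicial degree --- is exactly what the background material already supplies, following \cite{chgpd}; the body of the proof (Theorem~\ref{characteristicmap}) is then the one-line observation that Theorem~\ref{bottvan2} kills the ideal $\mathcal{J}$, precisely your final step.

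Two small technical points are worth noting. First, the DGA homomorphism out of $W(\mathfrak{gl}(q,\RB))$ lands in the simplicial forms $\Omega^{*}_{\Delta}(\GG_{1})$; the passage to the Bott--Shulman--Stasheff complex $\Omega^{*}(\GG^{(*)}_{1})$ is via the integration map $I$ of Proposition~\ref{intoverfibre}, which is only a cochain map (multiplicative only on cohomology), so your $\Phi_{\alpha^{\flat}}$ targeting $\Omega^{*}(\GG^{(*)}_{1})$ is not literally a DGA homomorphism --- though this does not affect the factoring argument. Second, the paper's $WO_{q}$ contains no Euler-class generator, so that particular worry is moot here.
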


Theorems \ref{bottintro} and \ref{charintro} should be thought of as the non-\'{e}tale analogues of \cite[Theorem 2 (iv)]{crainic1} and \cite[Lemma 17]{diffcyc} respectively.  We conclude Section \ref{sc2} by discussing how the lack of an invariant Euclidean structure on $N$ obstructs the na\"{i}ve construction of a characteristic map for the de Rham complex $\Omega^{*}(\GG^{(*)})$ associated to $\GG$.

Finally in Section \ref{sc3}, we restrict ourselves to the codimension 1 case, and use the explicit formulae provided by Theorem \ref{charintro} to derive a Godbillon-Vey cyclic cocycle for the convolution algebra $C_{c}^{\infty}(\GG_{1};\Omega^{\frac{1}{2}})$ of smooth leafwise half-densities $\Omega^{\frac{1}{2}}$ associated to $\GG_{1}$.  Our cocycle should be thought of as a non-\'{e}tale analogue of the Connes-Moscovici formula \cite[Proposition 19]{backindgeom}.  In contrast with the Connes-Moscovici formula obtained in the \'{e}tale setting, the cocycle we obtain in this paper has a novel geometric interpretation in terms of path integrals of the curvature form associated to a Bott connection.  These facts can be summarised as follows.

\begin{thm}
	Let $R^{\flat}$ be the curvature form on $\Fr^{+}(N)$ associated to a Bott connection on $N$, and let $R^{\GG}$ denote the differential 1-form on $\GG_{1}$ defined by
	\begin{equation}\label{intcurvintro}
	R^{\GG}_{u}:=\int_{\gamma}R^{\flat},\hspace{7mm}u\in\GG_{1}
	\end{equation}
	where $\gamma$ is any leafwise path in $\Fr^{+}(N)$ representing $u$. Assume that $\Fr^{+}(N)\cong M\times\RB^{*}_{+}$ has been trivialised by a choice of trivialisation for $N$.  Then the formula
	\[
	\varphi_{gv}(a^{0},a^{1}):=\int_{(x,t)\in\Fr^{+}(N)}\int_{u\in(\GG_{1})_{(x,t)}}a^{0}(u^{-1})\,a^{1}(u)\,\frac{dt}{t}\wedge R^{\GG}_{u},\hspace{7mm}a^{0},a^{1}\in C_{c}^{\infty}(\GG_{1};\Omega^{\frac{1}{2}})
	\]
	defines a cyclic cocycle $\varphi_{gv}$ for the convolution algebra $C_{c}^{\infty}(\GG_{1};\Omega^{\frac{1}{2}})$.
\end{thm}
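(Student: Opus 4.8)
\emph{Proof strategy.} The plan is to realise $\varphi_{gv}$ as the image of the Godbillon--Vey class of $\GG_1$, supplied by Theorem~\ref{charintro}, under the canonical ``integration over source fibres'' map from the de Rham complex of the simplicial manifold $\GG_1^{(\bullet)}$ into the cyclic cohomology of $C_{c}^{\infty}(\GG_1;\Omega^{\frac{1}{2}})$, and then to check the two cyclic cocycle axioms. First I recall that for $q=1$ the truncated Weil algebra $\underline{WO}_1$ is generated by classes $h_1$ and $c_1$ with $c_1^2=0$, so that $h_1c_1$ is a cocycle of total degree $3$; applying $\psi^{\GG}_{\alpha^\flat}$ (with $\SO(1)$ trivial) yields a degree $3$ cocycle $\Psi_{gv}:=\psi^{\GG}_{\alpha^\flat}(h_1c_1)$ in the double complex $\Omega^{\bullet}(\GG_1^{(\bullet)})$. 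In the trivialisation $\Fr^+(N)\cong M\times\RB^{*}_{+}$ a Bott connection form reads $\alpha^\flat=\frac{dt}{t}+\pi^*\bar\theta$ for a transverse $1$-form $\bar\theta$ on $M$, with curvature $R^\flat=\pi^*d\bar\theta$; unwinding $\psi^{\GG}_{\alpha^\flat}$ I would write $\Psi_{gv}$ out componentwise and observe that its component on $\GG_1^{(0)}=\Fr^+(N)$ is a constant multiple of $\frac{dt}{t}\wedge R^\flat$ (the classical Godbillon--Vey form, which lives on $\Fr^+(N)$ precisely because $\bar\theta\wedge d\bar\theta=0$ in codimension $1$), while its simplicial-degree-$1$ partner on $\GG_1^{(1)}$ involves $\frac{dt}{t}\wedge R^{\GG}$, the remaining components being controlled by Theorem~\ref{bottintro}. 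Pairing this simplicial-degree-$1$ component with two algebra elements and integrating over $\Fr^+(N)$ and the source fibres then reproduces the stated formula.

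The first delicate point is that $R^{\GG}$ is well defined: if $\gamma,\gamma'$ are leafwise paths in $\Fr^+(N)$ representing the same $u\in\GG_1$, they differ by a leafwise loop of trivial Bott holonomy, and since $R^\flat$ is closed (Bianchi identity, $\mathfrak{gl}(1,\RB)$ abelian) and is annihilated by leafwise vectors of the lifted foliation, Stokes' theorem on a leafwise homotopy gives $\int_\gamma R^\flat=\int_{\gamma'}R^\flat$; here the defining property of $\GG$ --- that leafwise paths are identified exactly when their Bott parallel transports agree --- is precisely what makes the argument run. The same reasoning for reversed and concatenated paths gives the transformation rule of $R^{\GG}$ under inversion (a sign together with a pullback) and its additivity under composition along pairs in $\GG_1^{(2)}$, as well as a transgression identity expressing $dR^{\GG}$ through the range and source pullbacks of $R^\flat$.

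Granting these, the axioms come out as follows. Cyclicity, $\varphi_{gv}(a^1,a^0)=-\varphi_{gv}(a^0,a^1)$, follows from the substitution $u\mapsto u^{-1}$ in the source-fibre integral (the usual trace manipulation for convolution algebras), using the inversion rule for $R^{\GG}$ together with the fact that $\frac{dt}{t}$ is \emph{invariant} under the multiplicative action of $\GG$ on the $\RB^{*}_{+}$-factor of $\Fr^+(N)$; this invariance is exactly why $\frac{dt}{t}$, rather than some non-invariant transverse volume, appears, and is the codimension $1$ surrogate for the invariant Euclidean structure whose absence is discussed at the close of Section~\ref{sc2}. For the Hochschild identity $b\varphi_{gv}=0$ one expands $\varphi_{gv}(a^0a^1,a^2)-\varphi_{gv}(a^0,a^1a^2)+\varphi_{gv}(a^2a^0,a^1)$ using the definition of the convolution product as a source-fibre integral over composable pairs, applies Fubini, and finds the required cancellation dictated by the composition rule for $R^{\GG}$ on $\GG_1^{(2)}$, the residual boundary contributions being eliminated by a Stokes argument on $\Fr^+(N)$ that invokes $d\frac{dt}{t}=0$ and $\iota_X R^\flat=0$ for leafwise $X$. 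Conceptually, once $\varphi_{gv}$ is identified with the image of the \emph{closed} form $\Psi_{gv}$ under the integration map, both axioms become formal consequences of that map being a morphism of complexes into the cyclic bicomplex; I would present the hands-on verification only as corroboration.

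The step I expect to be the main obstacle is neither axiom in isolation but the construction and bookkeeping of the integration-over-source-fibres map in the \emph{non-\'{e}tale} setting: in the \'{e}tale case source fibres are discrete and one merely sums, whereas here they are positive-dimensional leaves, so one must integrate the form coming from the simplicial direction against the half-density data of $a^0$ and $a^1$, control supports so that Fubini applies, and check that the result descends to a genuinely well-defined functional on the convolution algebra, independent of the chosen representing half-densities. This is tightly bound up with the well-definedness of $R^{\GG}$, where the structure of the holonomy groupoid does the real work; once these foundations are secured, what remains is a bookkeeping exercise driven by the invariance of $\frac{dt}{t}$ and the additivity of $R^{\GG}$ under concatenation of leafwise paths.
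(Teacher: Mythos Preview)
Your overall plan---realise $\varphi_{gv}$ via the characteristic map applied to $h_1c_1\in\underline{WO}_1$ and let closedness of $h_1c_1$ drive the cocycle identities---is the paper's strategy as well. But your well-definedness argument for $R^{\GG}$ has a genuine gap, and the paper executes the Hochschild and cyclic checks more directly than you propose.

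\textbf{The gap.} You argue that $\int_\gamma R^\flat$ is independent of the representing path by invoking Stokes' theorem ``on a leafwise homotopy'' between two such paths. The problem is that the holonomy groupoid identifies leafwise paths exactly when their Bott parallel transports agree, and this is strictly coarser than leafwise homotopy: a leafwise loop can have trivial holonomy without being null-homotopic in its leaf, so the homotopy you need is simply not available in general. The paper avoids this entirely by a direct computation (Proposition~\ref{integralprop}): using $\alpha^\flat(\dot\gamma)=0$ (Bott property) and parallelism of the extended normal field $\tilde{X}$, one finds $R^\flat(\dot\gamma,\tilde{X})=\dot\gamma\,\alpha^\flat(\tilde{X})$, so the path integral collapses to $\alpha^\flat(\tilde{X})|_{\gamma(1)}-\alpha^\flat(\tilde{X})|_{\gamma(0)}$. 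This identifies $R^{\GG}$ with the manifestly well-defined form $r^*\alpha^\flat-s^*\alpha^\flat=\partial\alpha^\flat$, from which path-independence, the inversion rule, and your ``additivity under composition'' ($\partial R^{\GG}=\partial^2\alpha^\flat=0$) all follow for free.

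\textbf{The cocycle identities.} The paper's verification of $b\varphi_{gv}=0$ is cleaner than your additivity-of-$R^{\GG}$-plus-Stokes-for-residuals outline. One does not treat $R^{\GG}$ and the $\tfrac{dt}{t}$ factor separately: writing the integrand as $\alpha^{\GG}\wedge R^{\GG}$ with $\alpha^{\GG}:=-\tfrac{1}{2}(r^*\alpha^\flat+s^*\alpha^\flat)$, this 2-form is the $\Omega^2(\GG_1^{(1)})$-component of the $\delta$-closed cocycle $\psi^{\GG}_{\alpha^\flat}(h_1c_1)$, and a simplex-degree count (each $\alpha^{(k)}\wedge R^{(k)}$ carries at most one $dt_i$) shows there is no $\GG_1^{(2)}$-component, hence $\partial(\alpha^{\GG}\wedge R^{\GG})=0$ on the nose. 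The three terms of $b\varphi_{gv}(a^0,a^1,a^2)$ are then precisely the integrals against $(\epsilon^2_i)^*(\alpha^{\GG}\wedge R^{\GG})$ for $i=0,1,2$, and their alternating sum vanishes---no residual boundary terms appear and no Stokes argument on $\Fr^{+}(N)$ is required. For cyclicity the paper again uses $\partial$-closedness rather than a direct substitution: evaluating $\partial(\alpha^{\GG}\wedge R^{\GG})=0$ at the pair $(v^{-1},v)$ and using that $R^{\GG}$ vanishes at units yields $(\alpha^{\GG}\wedge R^{\GG})_{v}+(\alpha^{\GG}\wedge R^{\GG})_{v^{-1}}=0$ in one line, which is slicker than tracking the inversion behaviour of $R^{\GG}$ and the invariance of $\tfrac{dt}{t}$ separately. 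Your concern that the ``integration over source fibres'' map is the main obstacle is therefore somewhat misplaced: the paper never constructs such a map as a general morphism into the cyclic bicomplex, but instead checks by hand (Lemma~\ref{gvdensity}) that the specific integrand is a compactly supported density and then verifies the two axioms directly from $\partial$-closedness.
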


We conclude the paper by demonstrating that the cyclic cocycle $\varphi_{gv}$ coincides with that obtained as Chern character of a semifinite spectral triple constructed using groupoid equivariant $KK$-theory in \cite[Section 4.3]{macr1}.  In doing so we give a (non-\'{e}tale) geometric interpretation for the off-diagonal term appearing in the triangular structures considered by Connes \cite[Lemma 5.2]{cyctrans} and Connes-Moscovici \cite[Part I]{CM}, in terms of the integrated curvature of Equation \eqref{intcurvintro}.

Let us stress that the approach taken in this paper has the advantage of being \emph{intrinsically geometric}, giving representatives of cohomological data that are expressed in terms of \emph{global} geometric data for $(M,\FF)$.  For instance, the Godbillon-Vey cyclic cocycle obtained in this paper has a completely novel interpretation in terms of line integrals of the Bott curvature over paths in $\FF$ representing elements of $\GG$ (see Proposition \ref{integralprop}). This is to be contrasted with the approaches taken in the \'{e}tale context, in which the geometry of $M$ has necessarily been lost by ``chopping up" $\GG$ into $\GG^{\TT}_{\TT}$.  In the \'{e}tale context, explicit formulae have so far tended to be obtained by tracking the displacement of \emph{local} geometric data (trivial connections in local transversals) \cite[Section 5.1, p. 47]{crainic1, backindgeom} under the action of $\GG^{\TT}_{\TT}$, which will in general not be easily relatable to the global geometry of $M$.

\subsection{Acknowledgements}

I wish to thank the Australian Federal Government for a Research Training Program scholarship.  I also thank Moulay Benameur for supporting a visit to Montpellier in late 2018, and Magnus Goffeng for supporting a visit to Gothenburg in early 2019, where parts of this research were conducted.  I also thank Magnus Goffeng and James Stasheff for helpful comments on the paper.  Finally, I would like to extend deep thanks to Adam Rennie, whose consistent (but never overbearing) guidance and support have greatly benefited my growth as a mathematician.

\section{Background}\label{sc1}

\subsection{Differential graded algebras and the Weil algebra}

For the entirety of this subsection, denote by $G$ a Lie group with Lie algebra $\mathfrak{g}$.  One of the key tools in Chern-Weil theory is the notion of a $G$-differential graded algebra.

\begin{defn}\label{gdga}
	A \textbf{$G$-differential graded algebra} $(A,d,i)$ is a differential graded algebra $(A,d)$ equipped with a $G$-action that preserves the grading of $A$ and commutes with the differential, and a linear map $i$ sending $\mathfrak{g}$ to the derivations of degree -1 on $(A,d)$ such that for all $X,Y\in\mathfrak{g}$ and for all $g\in G$ one has
		\[
		i_{X}\circ i_{Y} = -i_{Y}\circ i_{X},
		\]
		\[
		g\circ i_{X}\circ g^{-1} = i_{\Ad_{g}(X)},
		\]
		and
		\[
		i_{X}\circ d+ d\circ i_{X} = L_{X},
		\]
	where $L_{X}$ denotes the infinitesimal $G$-action defined by
	\[
	L_{X}(a):=\frac{d}{dt}\bigg|_{t=0}(\exp(tX)\cdot a),\hspace{7mm}a\in A.
	\]
	If $(A,d,i^{A})$ and $(B,b,i^{B})$ are two $G$-differential graded algebras, a homomorphism $\phi:A\rightarrow B$ of differential graded algebras is said to be a \textbf{homomorphism of $G$-differential graded algebras} if it commutes with the action of $G$ and with the maps $i^{A}$ and $i^{B}$
\end{defn}

Since $G$-differential graded algebras are in particular differential graded algebras, they admit a natural cochain complex and associated cohomology.  For geometric applications however, one is usually more interested in the associated \emph{basic} cohomology.

\begin{defn}\label{basic}
	Let $(A,d,i)$ be a $G$-differential graded algebra, and suppose that $K$ is a Lie subgroup of $G$, with Lie algebra $\mathfrak{k}$.  We say that an element $a\in A$ is \textbf{$G$-invariant} if $g\cdot a = a$ for all $g\in G$.  We say that $a\in A$ is \textbf{$K$-basic} if it is $G$-invariant and if $i_{X}a = 0$ for all $X\in\mathfrak{k}$, and denote the space of $K$-basic elements by $A_{K-basic}$.  The $G$-basic elements will be referred to simply as \textbf{basic}.
\end{defn}

If $(A,d,i)$ is a $G$-differential graded algebra, then it is an easy consequence of the commutativity of $d$ with the action of $G$, as well as the fact that $i_{X}\circ d+d\circ i_{X} = L_{X}$ for all $X\in\mathfrak{g}$, that $d$ preserves the space $A_{K-basic}$.  Therefore we obtain a cochain complex
\[
0\rightarrow A^{0}_{K-basic}\xrightarrow{d} A^{1}_{K-basic}\xrightarrow{d} A^{2}_{K-basic}\xrightarrow{d}\cdots
\]
whose cohomology we will be mostly interested in for geometric applications.

\begin{defn}
	Let $(A,d,i)$ be a $G$-differential graded algebra, and let $K$ be a Lie subgroup of $G$.  For each $n\in\NB$, the $n^{th}$ \textbf{$K$-basic cohomology group} of $(A,d,i)$ is the group
	\[
	H^{n}_{K-basic}(A):=\ker(d:A^{n}_{K-basic}\rightarrow A^{n+1}_{K-basic})/\im(d:A^{n-1}_{K-basic}\rightarrow A^{n}_{K-basic}).
	\]
	The \textbf{$K$-basic cohomology} of $(A,d,i)$ is the collection $H^{*}_{K-basic}(A)$ of all $K$-basic cohomology groups.
\end{defn}

Because a homomorphism $\phi:A_{1}\rightarrow A_{2}$ of $G$-differential graded algebras commutes with the respective actions of $G$ and $\mathfrak{g}$, it naturally induces a homomorphism $\phi:H^{*}_{K-basic}(A_{1})\rightarrow H^{*}_{K-basic}(A_{2})$ of their $K$-basic cohomologies for any Lie subgroup $K$ of $G$.  It will be useful to have an analogue of cochain homotopy to decide when the maps of cohomology induced by two such $G$-differential graded algebra homomorphisms coincide.

\begin{defn}
	Let $\phi_{0},\phi_{1}:A_{1}\rightarrow A_{2}$ be two homomorphisms of $G$-differential graded algebras $(A_{1},d_{1},i^{1})$ and $(A_{2},d_{2},i^{2})$.  We say that $\phi_{0}$ and $\phi_{1}$ are \textbf{$G$-cochain homotopic} if there exists a cochain homotopy $C:A_{1}^{*}\rightarrow A^{*-1}_{2}$ for which $C\circ g = g\circ C$ and $C\circ i^{1}_{X} = i^{2}_{X}\circ C$ for all $g\in G$ and $X\in\mathfrak{g}$.
\end{defn}

The equivariance properties of $G$-cochain homotopies can be used to verify the following homotopy invariance result.

\begin{prop}
	If $\phi_{0}$ and $\phi_{1}$ are $G$-cochain homotopic homomorphisms $A_{1}\rightarrow A_{2}$ of $G$-differential graded algebras then the maps $H^{*}_{K-basic}(A_{1})\rightarrow H^{*}_{K-basic}(A_{2})$ induced by $\phi_{0}$ and $\phi_{1}$ coincide for all Lie subgroups $K$ of $G$.\qed
\end{prop}

Let us now consider some important examples.

\begin{ex}\label{princ1}\normalfont
	Just as the immediate geometric example of a differential graded algebra is the algebra of differential forms on a manifold, the first geometric example of a $G$-differential graded algebra is given by the algebra of differential forms $\Omega^{*}(P)$ on the total space of a principal $G$-bundle $\pi:P\rightarrow M$ over a manifold $M$.
	
	The algebra $(\Omega^{*}(P),d)$ is regarded as a differential graded algebra in the usual way, while the action of $G$ on $\Omega^{*}(P)$ is obtained by pulling back differential forms under the canonical right action $R:P\times G\rightarrow P$.  That is, the action of $g\in G$ on $\omega\in\Omega^{*}(P)$ is given by
	\[
	g\cdot\omega:=R_{g^{-1}}^{*}\omega.
	\]
	To obtain the linear map $i$ from $\mathfrak{g}$ into the derivations of degree -1 on $\Omega^{*}(P)$ we recall that any $X\in\mathfrak{g}$ is associated with the fundamental vector field $V^{X}$ on $P$ defined by
	\[
	V^{X}_{p}:=\frac{d}{dt}\bigg|_{t=0}(p\cdot\exp(tX)),\hspace{7mm}p\in P.
	\]
	For any $X\in\mathfrak{g}$ we then define $i_{X}$ on $\Omega^{*}(P)$ to be the interior product operator with $V^{X}$.  The usual properties of the interior product together with the fact that
	\[
	(dR_{g})_{p}(V^{X}_{p}) = V^{\Ad_{g^{-1}}(X)}_{p\cdot g}
	\]
	for all $p\in P$ and $g\in G$ then show that $(\Omega^{*}(P),d,i)$ is indeed a $G$-differential graded algebra.  Since $P/G\cong M$, the basic complex $(\Omega^{*}(P)_{basic},d)$ identifies naturally with $(\Omega^{*}(M),d)$.  More generally, if $K$ is a Lie subgroup of $G$ with Lie algebra $\mathfrak{k}$, then $K$-basic elements of $\Omega^{*}(P)$ of degree $m$ are precisely those forms $\tilde{\omega}\in\Omega^{m}(P)$ for which
	\[
	\tilde{\omega}(X_{1}+\mathfrak{k},\dots,X_{m}+\mathfrak{k}) = \tilde{\omega}(X_{1},\dots,X_{m})
	\]
	is well-defined for all $X_{1},\dots,X_{m}\in TP$.  Any $\omega\in\Omega^{*}(P/K)$ pulls back therefore to a $K$-basic form on $P$, while any $K$-basic form $\tilde{\omega}$ on $P$ determines a form $\omega$ on $P/K$ by the formula
	\[
	\omega(X_{1},\dots,X_{m}):=\tilde{\omega}(X_{1}+\mathfrak{k},\dots,X_{m}+\mathfrak{k}).
	\]
	Thus the space of $K$-basic elements in $\Omega^{*}(P)$ coincides with the differential graded algebra $\Omega^{*}(P/K)$.
\end{ex}

Chern-Weil theory is obtained for principal $G$-bundles over manifolds by considering connection and curvature forms.  Our next example will be key in formalising the properties of such forms.

\begin{ex}\label{weil1}\normalfont
	One of the most important examples of a $G$-differential graded algebra is the \emph{Weil algebra} $W(\mathfrak{g})$ associated to the Lie algebra $\mathfrak{g}$ of $G$ \cite[Chapter 3]{sed}.  The Weil algebra should be thought of as being the home of ``universal" connection and curvature forms, and is constructed as follows.
	
	For each $k\in\NB$, denote by $S^{k}(\mathfrak{g}^{*})$ the space of functions
	\[
	\underbrace{\mathfrak{g}\times\cdots\times\mathfrak{g}}_{k\text{ times}}\rightarrow\RB
	\]
	which are invariant under the action of the symmetric group on the $k$ factors.  Denote by $S(\mathfrak{g}^{*})$ the sum over $k\in\NB$ of the $S^{k}(\mathfrak{g}^{*})$.  Consider also the exterior algebra $\Lambda(\mathfrak{g}^{*})$ defined in the usual way.  The \emph{Weil algebra} associated to $\mathfrak{g}$ is
	\[
	W(\mathfrak{g}):=S(\mathfrak{g}^{*})\otimes\Lambda(\mathfrak{g}^{*}).
	\]
	We endow $W(\mathfrak{g})$ with a grading by declaring any element $a\otimes b\in S^{k}(\mathfrak{g}^{*})\otimes\Lambda^{l}(\mathfrak{g}^{*})$ to have degree $2k+l$, under which $W(\mathfrak{g})$ is a graded-commutative algebra.  
	
	To define a differential on $W(\mathfrak{g})$ we choose a basis $(X_{i})_{i=1}^{\dim(\mathfrak{g})}$ for $\mathfrak{g}$ (for a basis-free definition, see \cite[Section 3.1]{sed}), with associated structure constants $f^{i}_{jk}$ defined by the equation
	\[
	[X_{i},X_{j}] = \sum_{i}f^{k}_{ij}X_{k}.
	\]
	The corresponding dual basis $(\xi^{i})_{i=1}^{\dim(\mathfrak{g})}$ of $\mathfrak{g}^{*}$ determines generators $\omega^{i}:=1\otimes\xi^{i}\in S^{0}(\mathfrak{g}^{*})\otimes\Lambda^{1}(\mathfrak{g}^{*})$ of degree 1 and $\Omega^{i}:=\xi^{i}\otimes1\in S^{1}(\mathfrak{g}^{*})\otimes\Lambda^{0}(\mathfrak{g}^{*})$ of degree 2, with respect to which the differential $d$ is defined by
	\[
	d\Omega^{i}:=\sum_{j,k}f^{i}_{jk}\Omega^{j}\omega^{k} \hspace{5mm} d\omega^{i} := \Omega^{i}-\frac{1}{2}\sum_{j,k}f^{i}_{jk}\omega^{j}\omega^{k}
	\]
	Extending $d$ to all of $W(\mathfrak{g})$ turns $(W(\mathfrak{g}),d)$ into the graded-commutative differential graded algebra that is freely generated by the $\omega^{i}$ and $\Omega^{i}$.  Note that by definition of $d\omega^{i}$, we can equally regard $W(\mathfrak{g})$ as being freely generated by the $\omega^{i}$ and $d\omega^{i}$.
	
	The coadjoint action of $G$ on $\mathfrak{g}^{*}$ given by
	\[
	(g\cdot\xi)(X):=(\Ad_{g^{-1}}^{*}\xi)(X) = \xi(\Ad_{g^{-1}}X)
	\]
	for $g\in G$, $\xi\in\mathfrak{g}^{*}$ and $X\in\mathfrak{g}$ extends to an action of $G$ on the generators $\alpha^{i}$, $\Omega^{i}$ and hence to an action of $G$ on all of $W(\mathfrak{g})$.  For $X\in\mathfrak{g}$, we define a derivation $i_{X}$ of degree -1 by
	\[
	i_{X}(\Omega^{i}) := 0 \hspace{5mm} i_{X}(\omega^{i}) := \omega^{i}(X),
	\]
	and the corresponding map $i$ from $\mathfrak{g}$ to the derivations of degree -1 of $W(\mathfrak{g})$ satisfies the required properties to make $(W(\mathfrak{g}),d,i)$ a $G$-differential graded algebra.
	
	By definition of $i$, the basic elements of $W(\mathfrak{g})$ identify with the space $I(G) = S(\mathfrak{g}^{*})^{G}$ of symmetric polynomials that are invariant under the coadjoint action of $G$.  If more generally $K$ is a Lie subgroup of $G$, then we use $W(\mathfrak{g},K)$ to denote the subalgebra of $K$-basic elements.
\end{ex}

The notions of connection and curvature can be formulated at the abstract algebraic level of $G$-differential graded algebras.

\begin{defn}\label{algcon}
	Let $(A,d,i)$ be a $G$-differential graded algebra.  A \textbf{connection} on $A$ is a degree 1 element $\alpha\in A^{1}\otimes\mathfrak{g}$ such that:
	\begin{enumerate}
		\item $g\cdot\alpha = \alpha$ for all $g\in G$, where the action of $g$ on $A\otimes\mathfrak{g}$ is given by $g\cdot(a\otimes X) = (g\cdot a)\otimes\Ad_{g}(X)$ and,
		\item $i_{X}\alpha = 1\otimes X$ for all $X\in\mathfrak{g}$.
	\end{enumerate}
	If $\alpha$ is a connection on $A$, then its \textbf{curvature} is the degree 2 element $R\in A^{2}\otimes\mathfrak{g}$ defined by the formula
	\[
	R:=d\alpha+\frac{1}{2}[\alpha,\alpha],
	\]
	where $[\cdot,\cdot]$ denotes graded Lie bracket of elements in $A\otimes\mathfrak{g}$.
\end{defn}

\begin{ex}\label{princon}\normalfont
	Let $\pi:P\rightarrow M$ be a principal $G$-bundle, with associated $G$-differential graded algebra $(\Omega^{*}(P),d,i)$ as defined in Example \ref{princ1}.  Then a connection $\alpha$ on $\Omega^{*}(P)$ is precisely a connection 1-form $\alpha\in\Omega^{1}(P;\mathfrak{g})$.  By definition, such a connection 1-form is invariant under the action of $G$
	\[
	g\cdot\alpha = \Ad_{g}(R_{g^{-1}}^{*}\alpha) = \alpha,\hspace{7mm}g\in G,
	\]
	and is vertical in the sense that $\alpha(V^{X}) = X$ for all $X\in\mathfrak{g}$, where $V^{X}$ denotes the fundamental vector field on $P$ associated to $X$.  The curvature of $\alpha$ is of course just the usual curvature 2-form $R\in\Omega^{2}(P;\mathfrak{g})$ defined by $\alpha$.
\end{ex}

\begin{ex}\label{weilcon}\normalfont
	The Weil algebra $W(\mathfrak{g})$ constructed in Example \ref{weil1} admits a canonical connection.  Given a basis $(X_{i})_{i=1}^{\dim(\mathfrak{g})}$ for $\mathfrak{g}$, with associated dual basis $(\xi^{i})_{i=1}^{\dim(\mathfrak{g})}$ and generators $\omega^{i} = 1\otimes\xi^{i}$ of degree 1 and $\Omega^{i} = \xi^{i}\otimes 1$ of degree 2 respectively, we define $\omega\in W(\mathfrak{g})^{1}\otimes\mathfrak{g}$ by the formula
	\[
	\omega:=\sum_{i}\omega^{i}\otimes X_{i}.
	\]
	Because the $X_{i}$ transform covariantly and the $\omega^{i}$ contravariantly this $\omega$ does not depend on the basis chosen, and for the same reason is invariant under the action of $G$.  By construction we have $i_{X}\omega= 1\otimes X$ for all $X\in\mathfrak{g}$.
\end{ex}

The Weil algebra enjoys the following universal property as a classifying algebra for connections on $G$-differential graded algebras.

\begin{thm}\cite[Theorem 3.3.1]{sed}\label{weiluni}
	Let $(A,d)$ be a $G$-differential graded algebra, and suppose that $\alpha\in A^{1}\otimes\mathfrak{g}$ is a connection on $A$.  Then there exists a unique homomorphism $\phi_{\alpha}:W(\mathfrak{g})\rightarrow A$ of $G$-differential graded algebras such that $\phi(\omega) = \alpha$.  Moreover if $\alpha_{0}$, $\alpha_{1}$ are two different connections on $A$, the corresponding maps $\phi_{\alpha_{0}}$ and $\phi_{\alpha_{1}}$ are $G$-cochain homotopic.\qed
\end{thm}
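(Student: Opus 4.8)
The plan is to reproduce the classical argument, exploiting that, as a graded-commutative algebra, $W(\mathfrak{g})$ is freely generated by the degree-one elements $\omega^{i}$ and the degree-two elements $\Omega^{i}$. Write $\alpha = \sum_{i}\alpha^{i}\otimes X_{i}$ with $\alpha^{i}\in A^{1}$, and let $R = d\alpha + \frac{1}{2}[\alpha,\alpha] = \sum_{i}R^{i}\otimes X_{i}$ be its curvature. By freeness there is a unique homomorphism of graded algebras $\phi_{\alpha}\colon W(\mathfrak{g})\to A$ with $\phi_{\alpha}(\omega^{i}) = \alpha^{i}$ and $\phi_{\alpha}(\Omega^{i}) = R^{i}$, and by construction $\phi_{\alpha}(\omega) = \alpha$. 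It then suffices to check that $\phi_{\alpha}$ commutes with $d$, with the $G$-action and with every $i_{X}$; since all three are compatible with products, it is enough to verify this on the generators $\omega^{i},\Omega^{i}$. Uniqueness is then automatic: any morphism $\phi$ of $G$-differential graded algebras with $\phi(\omega) = \alpha$ must send $\omega^{i}\mapsto\alpha^{i}$ and, commuting with $d$, must send $\Omega^{i} = d\omega^{i} + \frac{1}{2}\sum_{j,k}f^{i}_{jk}\omega^{j}\omega^{k}$ to $R^{i}$, and the $\omega^{i},\Omega^{i}$ generate $W(\mathfrak{g})$.

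For compatibility with $d$: applying $\phi_{\alpha}$ to the Weil relation $d\omega^{i} = \Omega^{i} - \frac{1}{2}\sum_{j,k}f^{i}_{jk}\omega^{j}\omega^{k}$ gives $R^{i} - \frac{1}{2}\sum_{j,k}f^{i}_{jk}\alpha^{j}\alpha^{k}$, which equals $d\alpha^{i} = d\phi_{\alpha}(\omega^{i})$, this being precisely $R = d\alpha + \frac{1}{2}[\alpha,\alpha]$ written componentwise; applying $\phi_{\alpha}$ to $d\Omega^{i} = \sum_{j,k}f^{i}_{jk}\Omega^{j}\omega^{k}$ gives $\sum_{j,k}f^{i}_{jk}R^{j}\alpha^{k}$, which equals $dR^{i}$ by the Bianchi identity $dR = [R,\alpha]$, itself obtained by applying $d$ to $R = d\alpha + \frac{1}{2}[\alpha,\alpha]$ and using $d\circ d = 0$, the graded Leibniz rule for the bracket and the graded Jacobi identity $[[\alpha,\alpha],\alpha] = 0$. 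For the $G$-action, the hypothesis $g\cdot\alpha = \alpha$ in $A\otimes\mathfrak{g}$ says exactly that each $\alpha^{i}$ transforms under $G$ according to the same coadjoint matrix as $\omega^{i}$, and since $d$ and the bracket are $G$-equivariant, $R$ is $G$-invariant as well, so $R^{i}$ transforms as $\Omega^{i}$; hence $\phi_{\alpha}$ is $G$-equivariant on generators and therefore everywhere. For the contractions, reading off $i_{X}\alpha = 1\otimes X$ in components gives $i_{X}\alpha^{i} = \xi^{i}(X) = \omega^{i}(X) = \phi_{\alpha}(i_{X}\omega^{i})$, while $i_{X}R = 0 = \phi_{\alpha}(i_{X}\Omega^{i})$ follows by writing $i_{X}R = i_{X}(d\alpha) + [i_{X}\alpha,\alpha]$ and using the identity $i_{X}\circ d + d\circ i_{X} = L_{X} - 1\otimes\mathrm{ad}_{X}$ on $A\otimes\mathfrak{g}$ together with $L_{X}\alpha = 0$ and $d(1\otimes X) = 0$ to obtain $i_{X}(d\alpha) = -[i_{X}\alpha,\alpha]$. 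This establishes that $\phi_{\alpha}$ is a homomorphism of $G$-differential graded algebras.

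For the homotopy statement I would run the standard mapping-cylinder argument. Give $A' := A\otimes\Omega^{*}([0,1])$ the tensor-product differential, the $G$-action on the first factor only, and the contractions $i_{X}\otimes 1$; this is again a $G$-differential graded algebra. If $\alpha_{0},\alpha_{1}$ are connections on $A$, then $\tilde{\alpha} := (1-t)\,\alpha_{0} + t\,\alpha_{1}$ is a connection on $A'$: it is $G$-invariant since $\alpha_{0},\alpha_{1}$ are and $(1-t),t$ are $G$-invariant functions, and $(i_{X}\otimes 1)\tilde{\alpha} = (1-t)(1\otimes X) + t(1\otimes X) = 1\otimes X$. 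The part of the theorem already proved yields $\phi_{\tilde{\alpha}}\colon W(\mathfrak{g})\to A'$. The fibre restrictions $\mathrm{ev}_{0},\mathrm{ev}_{1}\colon A'\to A$ at $t = 0,1$ are morphisms of $G$-differential graded algebras with $\mathrm{ev}_{j}\circ\tilde{\alpha} = \alpha_{j}$, so by the uniqueness clause $\mathrm{ev}_{j}\circ\phi_{\tilde{\alpha}} = \phi_{\alpha_{j}}$. Composing $\phi_{\tilde{\alpha}}$ with the de Rham fibre-integration operator $K\colon (A')^{*}\to A^{*-1}$ along $[0,1]$ gives $C := K\circ\phi_{\tilde{\alpha}}\colon W(\mathfrak{g})^{*}\to A^{*-1}$; the fibre-integration identity $d\circ K + K\circ d = \mathrm{ev}_{1} - \mathrm{ev}_{0}$, together with $\phi_{\tilde{\alpha}}$ being a chain map, gives $d\circ C + C\circ d = \phi_{\alpha_{1}} - \phi_{\alpha_{0}}$, while the equivariance of $C$ with respect to the $G$-action and the contractions follows from that of $\phi_{\tilde{\alpha}}$ together with the fact that $K$ only involves the $\Omega^{*}([0,1])$-direction, on which $G$ acts trivially.

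I expect the genuine difficulty to lie in the sign bookkeeping of this last step: one must fix a single convention for the graded bracket on $A\otimes\mathfrak{g}$, the Weil differential, the relation $i_{X}d + di_{X} = L_{X} - 1\otimes\mathrm{ad}_{X}$, the Bianchi identity and the fibre-integration formula, and in particular arrange that the equivariance of the homotopy operator $C$ with respect to the contractions $i_{X}$ holds with the sign demanded by the definition of $G$-cochain homotopy (the interior product entering $K$ is transverse to the one defining $i_{X}$, so a priori these anticommute, and reconciling this with the stated convention requires care). The structural content of every other step is routine, and these verifications are carried out in \cite[Theorem 3.3.1]{sed}, to which one may defer for the details.
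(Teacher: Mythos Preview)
The paper does not give a proof of this theorem: it is stated with a citation to \cite[Theorem 3.3.1]{sed} and closed with a \qed. Your write-up is the standard argument found in that reference (free generation of $W(\mathfrak{g})$ to define $\phi_{\alpha}$ on generators, verification on generators that it respects $d$, the $G$-action and the $i_{X}$, and the affine interpolation $\tilde{\alpha}=(1-t)\alpha_{0}+t\alpha_{1}$ in $A\otimes\Omega^{*}([0,1])$ followed by fibre integration for the homotopy), and it is correct.

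Your own flagged concern is the only genuinely delicate point: with the usual Koszul sign in the definition of the fibre-integration operator $K$, one finds $K\circ(i_{X}\otimes 1)=-\,i_{X}\circ K$ rather than strict commutation, so $C=K\circ\phi_{\tilde{\alpha}}$ satisfies $C\circ i_{X}=-\,i_{X}\circ C$. This is harmless for the intended application (it still sends $K$-basic elements to $K$-basic elements, since $i_{X}a=0$ forces $i_{X}C(a)=0$), but it does not literally match the equality $C\circ i^{1}_{X}=i^{2}_{X}\circ C$ in the paper's Definition of $G$-cochain homotopy. One fixes this either by inserting the sign $(-1)^{\deg}$ into the definition of $K$ on the appropriate factor, or by weakening the definition of $G$-cochain homotopy to allow anticommutation; either way the conclusion for $K$-basic cohomology is unaffected.
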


In this paper we will mostly be interested in the case $G = \GL^{+}(q,\RB)$ of invertible $q\times q$ matrices with positive determinant, and where $K = \SO(q,\RB)$.  Regard an element $\xi$ of $\mathfrak{gl}(q,\RB)^{*}$ as a matrix in the usual way, and let $\Omega:=\xi\otimes 1\in S^{1}(\mathfrak{gl}(q,\RB)^{*})\otimes\Lambda^{0}(\mathfrak{gl}(q,\RB)^{*})$ and $\omega:=1\otimes\xi\in S^{0}(\mathfrak{gl}(q,\RB)^{*})\otimes\Lambda^{1}(\mathfrak{gl}(q,\RB)^{*})$ denote the corresponding elements of $W(\mathfrak{gl}(q,\RB))$ of degrees 2 and 1 respectively.  Using matrix multiplication, the differential on $W(\mathfrak{gl}(q,\RB))$ acts by the simple formulae
\[
d\omega=\Omega-\omega^{2},\hspace{7mm}d\Omega = \Omega\omega-\omega\Omega.
\]
It is well-known \cite[p. 187]{gue} that for $1\leq i\leq q$, the elements $c_{i}\in W(\mathfrak{gl}(q,\RB))$ defined by
\begin{equation}\label{ci}
c_{i}:=\Tr(\Omega^{i})
\end{equation}
are all cocycles and are all $\GL^{+}(q,\RB)$-basic.  With respect to the decomposition $\mathfrak{gl}(q,\RB) = \mathfrak{so}(q,\RB)\oplus\mathfrak{s}(q,\RB)$ of all $q\times q$ matrices into antisymmetric matrices and symmetric matrices respectively, our elements $\omega$ and $\Omega$ defined above decompose as
\[
\omega = \omega_{o}+\omega_{s},\hspace{7mm}\Omega = \Omega_{o}+\Omega_{s}.
\]
Here the subscript $o$ denotes the antisymmetric part, while the subscript $s$ denotes the symmetric part.  It can then be shown \cite[Proposition 5]{gue} that for $1\leq i\leq q$, the elements
\[
h_{i}:=i\Tr\bigg(\int_{0}^{1}\omega_{s}(t\Omega_{s}+\Omega_{o}+(t^{2}-1)\omega^{2}_{s})^{i-1}dt\bigg)
\]
of $W(\mathfrak{gl}(q,\RB))$ satisfy $dh_{i} = c_{i}$, and are $\SO(q,\RB)$-basic for $i$ odd.  We can assemble the $c_{i}$ and $h_{i}$ into a new, simpler subalgebra of $W(\mathfrak{gl}(q,\RB),\SO(q,\RB))$ which is useful for obtaining explicit formulae.

\begin{defn}\label{WOq}
	Let $q\geq 1$ and let $o$ be the largest odd integer that is less than or equal to $q$.  We denote by $WO_{q}$ the differential graded subalgebra of $W(\mathfrak{gl}(q,\RB),\SO(q,\RB))$ generated by odd degree elements $h_{1},h_{3},\dots,h_{o}$, with $\deg(h_{i}) = i$, and even degree elements $c_{1},c_{2},\dots,c_{q}$ with $\deg(c_{i}) = 2i$.
\end{defn}

We will abuse terminology in referring to the algebra $WO_{q}$ as the Weil algebra.  We pre-empt its use for foliations with the following refinement of Theorem \ref{weiluni}.

\begin{cor}\label{glweiluni}
	Let $(A,d)$ be a $\GL^{+}(q,\RB)$-differential graded algebra, and suppose that $\alpha\in A\otimes\mathfrak{gl}(q,\RB)$ is a connection on $A$, with curvature $R$.  Decompose $\alpha=\alpha_{o}+\alpha_{s}$ and $R = R_{o}+R_{s}$ into their antisymmetric and symmetric components respectively.  Then the formulae
	\[
	c_{i}\mapsto \Tr(R^{i}),\hspace{7mm}\text{for $1\leq i\leq q$},
	\]
	\[
	h_{i}\mapsto i\Tr\bigg(\int_{0}^{1}\alpha_{s}(tR_{s}+R_{o}+(t^{2}-1)\alpha_{s}^{2})^{i-1}dt\bigg),\hspace{7mm}\text{ for $1\leq i\leq q$, $i$ odd},
	\]
	define a homomorphism $\psi_{\alpha}:WO_{q}\rightarrow A_{\SO(q,\RB)-basic}$ of differential graded algebras.  Moreover if $\beta$ is any other choice of connection on $A$, the maps induced by $\psi_{\alpha}$ and $\psi_{\beta}$ on cohomology coincide.\qed
\end{cor}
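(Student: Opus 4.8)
The plan is to deduce Corollary \ref{glweiluni} directly from the universal property of the full Weil algebra in Theorem \ref{weiluni}, by showing that the proposed formulae for $c_i$ and $h_i$ are nothing but the restriction to $WO_q$ of the canonical homomorphism $\phi_\alpha\colon W(\mathfrak{gl}(q,\RB))\to A$ classified by the connection $\alpha$. First I would invoke Theorem \ref{weiluni} to obtain the unique $\GL^+(q,\RB)$-differential graded algebra homomorphism $\phi_\alpha\colon W(\mathfrak{gl}(q,\RB))\to A$ with $\phi_\alpha(\omega)=\alpha$; since $\phi_\alpha$ commutes with $d$, with the $\GL^+(q,\RB)$-action and with the contractions $i_X$, it sends $K$-basic elements to $K$-basic elements for $K=\SO(q,\RB)$, hence restricts to a homomorphism $W(\mathfrak{gl}(q,\RB),\SO(q,\RB))\to A_{\SO(q,\RB)\text{-basic}}$ of differential graded algebras. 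Composing with the inclusion $WO_q\hookrightarrow W(\mathfrak{gl}(q,\RB),\SO(q,\RB))$ of Definition \ref{WOq} gives the desired $\psi_\alpha$, provided I check that $\psi_\alpha$ acts on the generators $c_i$, $h_i$ by the stated formulae.

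That verification is the computational heart of the argument, but it is essentially bookkeeping. Because $\phi_\alpha$ is a homomorphism of $G$-differential graded algebras with $\phi_\alpha(\omega)=\alpha$, the compatibility with $d$ forces $\phi_\alpha(\Omega)=\phi_\alpha(d\omega+\omega^2)=d\alpha+\alpha^2=R$ (using $R=d\alpha+\tfrac12[\alpha,\alpha]$ and that the graded bracket of the degree-$1$ element $\alpha$ with itself is $2\alpha^2$ in matrix notation). Multiplicativity then gives $\phi_\alpha(c_i)=\phi_\alpha(\Tr(\Omega^i))=\Tr(R^i)$, which is the first formula. For the $h_i$, one uses that $\phi_\alpha$ respects the decomposition $\mathfrak{gl}(q,\RB)=\mathfrak{so}(q,\RB)\oplus\mathfrak{s}(q,\RB)$ — indeed this decomposition is $\SO(q,\RB)$-equivariant, so $\phi_\alpha(\omega_o)=\alpha_o$, $\phi_\alpha(\omega_s)=\alpha_s$, and likewise for $\Omega_o,\Omega_s,R_o,R_s$ — and that $\phi_\alpha$ commutes with the formal integration $\int_0^1(\cdot)\,dt$ appearing in the definition of $h_i$, since that integral is just a finite $\RB$-linear combination of monomials in the generators. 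Applying $\phi_\alpha$ termwise to
\[
h_i=i\Tr\bigg(\int_0^1\omega_s(t\Omega_s+\Omega_o+(t^2-1)\omega_s^2)^{i-1}\,dt\bigg)
\]
then yields exactly the stated image of $h_i$, and the fact that $h_i$ is $\SO(q,\RB)$-basic (for $i$ odd) guarantees its image lands in $A_{\SO(q,\RB)\text{-basic}}$.

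Finally, for the independence of the cohomology class on the choice of connection, I would appeal to the second half of Theorem \ref{weiluni}: if $\alpha$ and $\beta$ are two connections on $A$, then $\phi_\alpha$ and $\phi_\beta$ are $G$-cochain homotopic, and by the homotopy invariance proposition recorded earlier in the excerpt they induce the same maps $H^*_{\SO(q,\RB)\text{-basic}}(W(\mathfrak{gl}(q,\RB)))\to H^*_{\SO(q,\RB)\text{-basic}}(A)$. Precomposing with the inclusion $WO_q\hookrightarrow W(\mathfrak{gl}(q,\RB),\SO(q,\RB))$ — which induces a well-defined map on $\SO(q,\RB)$-basic cohomology — shows $\psi_\alpha$ and $\psi_\beta$ agree on $H^*(WO_q)$. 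The only genuinely delicate point is ensuring that $\phi_\alpha(\Omega)=R$ with the correct constant and sign, i.e.\ reconciling the structure-constant definition of $d$ on $W(\mathfrak{gl}(q,\RB))$ with the matrix-multiplication shorthand $d\omega=\Omega-\omega^2$ and with the graded-bracket convention in $R=d\alpha+\tfrac12[\alpha,\alpha]$; everything else is a routine consequence of multiplicativity and equivariance of $\phi_\alpha$, so I expect the matrix-identity normalisation to be where care is needed but not where difficulty lies.
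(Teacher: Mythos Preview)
Your proposal is correct and follows exactly the approach the paper intends: the corollary is stated with a \qed and no proof, as it is meant to be an immediate consequence of Theorem \ref{weiluni} together with the explicit description of $WO_q$ as a subalgebra of $W(\mathfrak{gl}(q,\RB),\SO(q,\RB))$, and you have supplied precisely those details. The computational checks you outline (that $\phi_\alpha(\Omega)=R$, that $\phi_\alpha$ respects the symmetric/antisymmetric decomposition, and that the $G$-cochain homotopy descends to $\SO(q,\RB)$-basic cohomology) are the right ones and are routine.
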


\subsection{The classical Chern-Weil homomorphism for foliations}\label{ssc2}

Recall that a foliated manifold $(M,\FF)$ of codimension $q$ is \emph{transversely orientable} if its normal bundle $\pi_{N}:N:=TM/T\FF\rightarrow M$ is an orientable vector bundle.  Given such a foliated manifold, we can mimic the classical Chern-Weil construction of Bott \cite{bott2} using Corollary \ref{glweiluni} as follows.   Let $\pi_{\Fr^{+}(N)}:\Fr^{+}(N)\rightarrow M$ denote the positively oriented transverse frame bundle of $N$, a principal $\GL^{+}(q,\RB)$-bundle whose fibre $\Fr^{+}(N)_{x}$ over $x\in M$ consists of all positively oriented linear isomorphisms $\phi:\RB^{q}\rightarrow N_{x}$.  Letting $\GG$ denote the holonomy groupoid \cite{wink} of $(M,\FF)$, recall \cite[Section 2.2]{macr1} that there is a natural action $\GG\times_{s,\pi_{N}}N\rightarrow N$ of $\GG$ on $N$ by linear isomorphisms, which we denote
\[
\GG\times_{s,\pi_{N}}N\ni(u,n)\mapsto u_{*}n\in N.
\]
We obtain an induced action $\GG\times_{s,\pi_{\Fr^{+}(N)}}\Fr^{+}(N)\rightarrow\Fr^{+}(N)$ of $\GG$ on $\Fr^{+}(N)$ defined by
\[
u\cdot\phi:=u_{*}\circ\phi:\RB^{q}\rightarrow N_{r(u)},\hspace{7mm}(u,\phi)\in\GG\times_{s,\pi_{\Fr^{+}(N)}}\Fr^{+}(N).
\]
By associativity of composition, this action of $\GG$ commutes with the canonical right action of $\GL^{+}(q,\RB)$ on the principal $\GL^{+}(q,\RB)$-bundle $\Fr^{+}(N)$.  Moreover, the orbits of $\GG$ in $\Fr^{+}(N)$ define a foliation $\FF_{\Fr^{+}(N)}$ of $\Fr^{+}(N)$ for which the differential of the projection $\pi_{\Fr^{+}(N)}$ maps $T\FF_{\Fr^{+}(N)}$ fibrewise-isomorphically onto $T\FF$.  The following definition does not appear explicitly in the literature, but as we will show it is nonetheless essentially classical.

\begin{defn}\label{bottconform}
	A connection form $\alpha^{\flat}\in\Omega^{*}(\Fr^{+}(N);\mathfrak{gl}(q,\RB))$ is called a \textbf{Bott connection form} if $T\FF_{\Fr^{+}(N)}\subset\ker(\alpha^{\flat})$.
\end{defn}

Let us justify this terminology.  First, let $p:TM\rightarrow N$ denote the projection onto the normal bundle, and recall that a connection $\nabla^{\flat}:\Gamma^{\infty}(M;N)\rightarrow\Gamma^{\infty}(M;T^{*}M\otimes N)$ for $N$ is called a \emph{Bott connection} if it satisfies
\begin{equation}\label{bottconclass}
\nabla^{\flat}_{X}p(Y) = p[X,Y], \hspace{7mm}Y\in\Gamma^{\infty}(M;TM),
\end{equation}
whenever $X$ is a leafwise vector field.  The next result establishes the relationship between Bott connections in the sense of Equation \eqref{bottconclass} and Bott connection forms in the sense of Definition \ref{bottconform}.  This relationship is essentially classical, so although the next result does not appear explicitly in the literature and its proof is independent, we make no claim to originality.

\begin{prop}\label{bottconnections}
	Bott connections $\nabla^{\flat}$ on $N$ are in bijective correspondence with Bott connection forms $\alpha^{\flat}$ on $\Fr^{+}(N)$.  Moreover, any Bott connection form $\alpha^{\flat}$ on $\Fr^{+}(N)$ canonically determines a Bott connection $\nabla^{\Fr^{+}(N)}$ for the foliated manifold $(\Fr^{+}(N),\FF_{\Fr^{+}(N)})$.
\end{prop}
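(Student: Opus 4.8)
The plan is to exploit the standard dictionary between linear connections on a vector bundle $N\to M$ and principal connection forms on its frame bundle, and then to check that the ``Bott'' conditions on each side correspond under this dictionary. First I would recall the correspondence in general: a connection form $\alpha\in\Omega^{1}(\Fr^{+}(N);\mathfrak{gl}(q,\RB))$ determines a horizontal distribution $H=\ker(\alpha)\subset T\Fr^{+}(N)$, and hence a covariant derivative $\nabla$ on $N=\Fr^{+}(N)\times_{\GL^{+}(q,\RB)}\RB^{q}$ via horizontal lifts; conversely a linear connection $\nabla$ on $N$ has a well-defined horizontal lift and so determines a connection form. This is a bijection, and it is natural, so all that remains is to match the two notions of ``Bott''. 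The key computation is the following: if $\phi\in\Fr^{+}(N)_{x}$, viewed as an isomorphism $\RB^{q}\to N_{x}$, and $X$ is a leafwise vector field on $M$ with horizontal lift $\tilde X$ at $\phi$, then for $v\in\RB^{q}$ the section $y\mapsto \phi_{y}(v)$ (defined by parallel-transporting $\phi$ and applying to $v$) along an integral curve of $X$ computes $\nabla^{\flat}_{X}(\phi(v))$, and one checks that $T\FF_{\Fr^{+}(N)}\subset\ker(\alpha^{\flat})$ is equivalent to the leafwise parallel transport of $N$ agreeing with the canonical flat structure of $N$ along leaves. Unwinding this in foliated coordinates, where $N$ is canonically trivialised along the plaque and the projection $p[X,\,\cdot\,]$ is exactly the Lie-derivative/flat derivative of a section in the leaf direction, gives precisely Equation \eqref{bottconclass}.

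Concretely I would work in a foliated chart $U\cong V\times W$, $V\subset\RB^{p}$ (leaf directions), $W\subset\RB^{q}$ (transverse), in which $N|_{U}$ is canonically trivial with frame $\partial/\partial w^{1},\dots,\partial/\partial w^{q}$ and the flat (Bott) derivative $\nabla^{0}$ in the leaf directions is given by $\nabla^{0}_{X}p(Y)=p[X,Y]$ for leafwise $X$. In such a chart $\Fr^{+}(N)|_{U}\cong U\times\GL^{+}(q,\RB)$, the foliation $\FF_{\Fr^{+}(N)}$ is $V\times\{\mathrm{pt}\}\times\{\mathrm{pt}\}$ (since the $\GG$-action, hence holonomy, is trivial along a plaque), and one writes $\alpha^{\flat}=g^{-1}dg + g^{-1}\theta g$ where $\theta$ is the $\mathfrak{gl}(q,\RB)$-valued local connection $1$-form of $\nabla^{\flat}$ pulled back to $U$. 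Then $T\FF_{\Fr^{+}(N)}\subset\ker(\alpha^{\flat})$ says exactly that $\theta$ annihilates leafwise vectors, i.e.\ the Christoffel symbols of $\nabla^{\flat}$ in the leaf directions vanish in this adapted frame, which is exactly \eqref{bottconclass}. Well-definedness across overlapping charts is automatic because both conditions are coordinate-free; so the restricted correspondence $\nabla^{\flat}\leftrightarrow\alpha^{\flat}$ is still a bijection.

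For the second assertion I would take a Bott connection form $\alpha^{\flat}$ on $\Fr^{+}(N)$ and produce a Bott connection on $N_{\Fr^{+}(N)}:=T\Fr^{+}(N)/T\FF_{\Fr^{+}(N)}$ for the foliated manifold $(\Fr^{+}(N),\FF_{\Fr^{+}(N)})$. The natural candidate is the pullback connection $\nabla^{\Fr^{+}(N)}:=\pi_{\Fr^{+}(N)}^{*}\nabla^{\flat}$, using that $d\pi_{\Fr^{+}(N)}$ carries $T\FF_{\Fr^{+}(N)}$ isomorphically onto $T\FF$ (stated in the excerpt), so that $N_{\Fr^{+}(N)}\cong\pi_{\Fr^{+}(N)}^{*}N$. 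Since pullback of a connection along a map intertwines covariant derivatives with $f$-related vector fields, and leafwise vector fields on $\Fr^{+}(N)$ are $\pi_{\Fr^{+}(N)}$-related to leafwise vector fields on $M$, the defining identity \eqref{bottconclass} pulls back verbatim; hence $\nabla^{\Fr^{+}(N)}$ is a Bott connection on $(\Fr^{+}(N),\FF_{\Fr^{+}(N)})$.

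I expect the main obstacle to be purely bookkeeping rather than conceptual: namely, verifying carefully that the orbit foliation $\FF_{\Fr^{+}(N)}$ really is the ``horizontal lift'' foliation one expects in an adapted chart (equivalently, that holonomy acts trivially within a single plaque, so that $T\FF_{\Fr^{+}(N)}$ is spanned by the canonical lifts of leafwise coordinate vector fields in the trivialisation $\Fr^{+}(N)|_{U}\cong U\times\GL^{+}(q,\RB)$), and then translating the two ``Bott'' conditions through the change-of-frame term $g^{-1}dg$ without sign or placement errors. Once the adapted local model is pinned down, both equivalences are immediate.
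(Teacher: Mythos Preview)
Your first part is correct and matches the paper's argument essentially verbatim: work in a foliated chart, write the connection form locally as $g^{-1}dg + g^{-1}\theta g$ (the paper phrases this as $\nabla = d + \alpha_{U}$ with $\alpha_{U} = \chi_{U}^{*}\alpha$), and observe that $T\FF_{\Fr^{+}(N)}\subset\ker(\alpha^{\flat})$ is equivalent to $\theta$ annihilating leafwise vectors, which is exactly \eqref{bottconclass}.

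Your second part, however, contains a genuine error. The claimed isomorphism $N_{\Fr^{+}(N)}\cong\pi_{\Fr^{+}(N)}^{*}N$ is false: the foliation $\FF_{\Fr^{+}(N)}$ has the \emph{same} leaf dimension as $\FF$ (since $d\pi_{\Fr^{+}(N)}$ restricts to a fibrewise isomorphism $T\FF_{\Fr^{+}(N)}\to T\FF$), so its codimension is $q+q^{2}$, not $q$. The vertical bundle $V=\ker(d\pi_{\Fr^{+}(N)})$, of rank $q^{2}$, is transverse to $T\FF_{\Fr^{+}(N)}$ and therefore contributes to $N_{\Fr^{+}(N)}$. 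Your pullback $\pi_{\Fr^{+}(N)}^{*}\nabla^{\flat}$ only defines a connection on the rank-$q$ summand and says nothing about the vertical directions.

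The paper's fix is to decompose $N_{\Fr^{+}(N)} = V\oplus H$, where $H:=\ker(\alpha^{\flat})/T\FF_{\Fr^{+}(N)}$ is the horizontal piece (this is the part that does identify with $\pi_{\Fr^{+}(N)}^{*}N$, and carries the tautological trivialisation $h\mapsto\phi^{-1}(d\pi_{\Fr^{+}(N)}(h))$), while $V$ is trivialised by fundamental vector fields. With respect to the resulting global trivialisation $N_{\Fr^{+}(N)}\cong\Fr^{+}(N)\times(\RB^{q^{2}}\oplus\RB^{q})$, one sets $\nabla^{\Fr^{+}(N)}:=d+\id_{\RB^{q^{2}}}\oplus\,\alpha^{\flat}$, and the vanishing of $\alpha^{\flat}$ on $T\FF_{\Fr^{+}(N)}$ makes this a Bott connection. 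So the missing idea is precisely to account for the vertical summand $V$ and to use the canonical trivialisations of both summands rather than attempting a naive pullback.
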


\begin{proof}
	For the first part, let $\nabla$ be a connection on $N$ and let $\alpha$ be the connection form on $\Fr^{+}(N)$ determined by $\nabla$.  Notice that in foliated coordinates $(x^{1},\dots,x^{p};z^{1},\dots,z^{q})$ over $U\subset M$, defining a local section $\chi_{U}:U\rightarrow\Fr^{+}(N)|_{U}$ of $\pi_{\Fr^{+}(N)}$, $\nabla$ can be written
	\[
	\nabla = d+\alpha_{U}
	\]
	where $\alpha_{U}:=\chi_{U}^{*}\alpha\in\Omega^{1}(U;\mathfrak{gl}(q,\RB))$.  Let $\sigma = \sigma^{i}\frac{\partial}{\partial z^{i}}$ be a normal vector field over $U$, and let $X$ be a leafwise vector field over $U$.  Then
	\[
	\nabla_{X}\sigma = d\sigma^{i}(X)\frac{\partial}{\partial z^{i}}+\alpha_{U}(X)\sigma = p[X,\sigma]+\alpha_{U}(X)\sigma
	\]
	is equal to $p[X,\sigma]$ if and only if $\alpha_{U}(X)$ vanishes.  Thus a connection $\nabla$ on $N$ is a Bott connection if and only if its local connection form $\alpha_{U}$ vanishes on leafwise vectors in any foliated coordinate neighbourhood $U$.  Since $\pi_{\Fr^{+}(N)}$ maps $T\FF_{\Fr^{+}(N)}$ fibrewise isomorphically to $T\FF$ we see that every $\alpha_{U}$ vanishes on $T\FF$ if and only if $\alpha$ vanishes on $T\FF_{\Fr^{+}(N)}$.  Consequently $\nabla =\nabla^{\flat}$ is a Bott connection on $N$ if and only if its associated connection form $\alpha = \alpha^{\flat}$ on $\Fr^{+}(N)$ is a Bott connection form.
	
	For the second part, suppose that $\alpha^{\flat}$ is a Bott connection form on $\Fr^{+}(N)$.  By hypothesis we have $T\FF_{\Fr^{+}(N)}\subset\ker(\alpha^{\flat})$, and since $\ker(\alpha^{\flat})$ projects fibrewise-isomorphically onto $TM$, the quotient bundle $H:=\ker(\alpha^{\flat})/T\FF_{\Fr^{+}(N)}$ projects fibrewise-isomorphically onto $N = TM/T\FF$.  Consequently, $H$ admits a tautological trivialisation $H\cong\Fr^{+}(N)\times\RB^{q}$ defined by
	\[
	H_{\phi}\ni h\mapsto\phi^{-1}(d\pi_{\Fr^{+}(N)}(h))\in\RB^{q},\hspace{7mm}\phi\in\Fr^{+}(N).
	\]
	The vertical bundle $V:=\ker(d\pi_{\Fr^{+}(N)})$ over $\Fr^{+}(N)$ is canonically trivialised by the fundamental vector fields, so we obtain the canonical trivialisation
	\[
	N_{\Fr^{+}(N)}:=T\Fr^{+}(N)/T\FF_{\Fr^{+}(N)} = V\oplus H\cong \Fr^{+}(N)\times(\RB^{q^{2}}\oplus\RB^{q})
	\]
	of the normal bundle for the foliated manifold $(\Fr^{+}(N),\FF_{\Fr^{+}(N)})$.  With respect to this global trivialisation, the vanishing of $\alpha^{\flat}$ on $T\FF_{\Fr^{+}(N)}$ implies that
	\[
	\nabla^{\Fr^{+}(N)}:=d+\id_{\RB^{q^{2}}}\oplus\,\alpha^{\flat}
	\]
	defines a Bott connection on $N_{\Fr^{+}(N)}$.
\end{proof}

Bott connections are important because of the following vanishing result, known as Bott's vanishing theorem \cite[p. 34]{bott1}.  Its proof follows from an easy local coordinate calculation.

\begin{thm}[Bott's vanishing theorem]
	Let $\alpha^{\flat}\in\Omega^{1}(\Fr^{+}(N);\mathfrak{gl}(q,\RB))$ be a Bott connection form, and let $R^{\flat}:=d\alpha^{\flat}+\alpha^{\flat}\wedge\alpha^{\flat}$ be its curvature.  Then any polynomial of degree greater than $q$ in the components of $R^{\flat}$ vanishes.\qed
\end{thm}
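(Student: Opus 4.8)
The plan is to reduce everything to a single pointwise computation in the exterior algebra of $\Fr^{+}(N)$: I want to show that, at each point, every matrix entry of $R^{\flat}$ lies in the ideal generated by a fixed $q$-dimensional space of $1$-forms, after which the degree bound is forced by pure linear algebra. To set this up I would first record two vanishing properties of $R^{\flat}$. The first is standard and has nothing to do with the foliation: as the curvature of a principal connection, $R^{\flat}$ is horizontal, $\iota_{V^{X}}R^{\flat}=0$ for every fundamental vector field $V^{X}$, which follows from $\iota_{V^{X}}\alpha^{\flat}=X$, equivariance of $\alpha^{\flat}$, and the structure equation $R^{\flat}=d\alpha^{\flat}+\alpha^{\flat}\wedge\alpha^{\flat}$. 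The second is where Definition \ref{bottconform} is used: $R^{\flat}(Y,Z)=0$ whenever $Y,Z$ are leafwise, i.e. sections of $T\FF_{\Fr^{+}(N)}$. This is immediate from $T\FF_{\Fr^{+}(N)}\subseteq\ker(\alpha^{\flat})$, since then $\iota_{Y}\alpha^{\flat}=\iota_{Z}\alpha^{\flat}=0$ kills the $\alpha^{\flat}\wedge\alpha^{\flat}$ term, while $d\alpha^{\flat}(Y,Z)=-\alpha^{\flat}([Y,Z])=0$ because $T\FF_{\Fr^{+}(N)}$, being a tangent distribution to a foliation, is involutive.

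Next I would package these at a fixed $\phi\in\Fr^{+}(N)$. Writing $V_{\phi}$ for the vertical tangent space, the fact (recalled in the text preceding Definition \ref{bottconform}) that $d\pi_{\Fr^{+}(N)}$ maps $T\FF_{\Fr^{+}(N)}$ isomorphically onto $T\FF$ gives $V_{\phi}\cap T\FF_{\Fr^{+}(N),\phi}=0$, so $V_{\phi}\oplus T\FF_{\Fr^{+}(N),\phi}$ has dimension $q^{2}+p$ and its annihilator $W_{\phi}\subseteq T_{\phi}^{*}\Fr^{+}(N)$ has dimension exactly $q$. Picking a basis of $T_{\phi}\Fr^{+}(N)$ adapted to a splitting $V_{\phi}\oplus T\FF_{\Fr^{+}(N),\phi}\oplus H_{\phi}$, horizontality says $R^{\flat}_{\phi}$ has no term involving the covectors dual to $V_{\phi}$, and the second vanishing property says it has no term lying in $\Lambda^{2}$ of the covectors dual to $H_{\phi}$; what remains is that every entry of $R^{\flat}_{\phi}$ lies in the ideal $\mathcal{I}_{\phi}$ of $\Lambda^{*}T_{\phi}^{*}\Fr^{+}(N)$ generated by $W_{\phi}$. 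A polynomial of degree $k$ in the entries of $R^{\flat}$ therefore evaluates at $\phi$ inside $\mathcal{I}_{\phi}^{k}$, and $\mathcal{I}_{\phi}^{k}=0$ as soon as $k>q$ because $\Lambda^{k}W_{\phi}=0$ for a $q$-dimensional $W_{\phi}$. As $\phi$ was arbitrary, this proves the theorem. (The same count admits a coordinate phrasing: in foliated coordinates $(x^{i};z^{j})$ on $U\subseteq M$ with an adapted local section $\chi_{U}$, the computation in the proof of Proposition \ref{bottconnections} shows $\chi_{U}^{*}\alpha^{\flat}=\sum_{k}\omega_{U}^{(k)}\,dz^{k}$, so $\chi_{U}^{*}R^{\flat}$ lands entrywise in the ideal of $\Omega^{*}(U)$ generated by $dz^{1},\dots,dz^{q}$, and horizontality of $R^{\flat}$ lets one recover $R^{\flat}$ from such pullbacks.)

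The step I would flag as the main obstacle is not any single calculation but the bookkeeping of which directions the count actually runs over. The naive temptation --- choosing foliated coordinates directly on $\Fr^{+}(N)$ for the foliation $\FF_{\Fr^{+}(N)}$ --- fails, because that foliation has codimension $q+q^{2}$, not $q$, so it would only give vanishing in degrees above $q+q^{2}$. The point is that two features must be combined: horizontality of the principal curvature form removes the $q^{2}$ fibre directions, and the Bott condition removes the $p$ leafwise directions, leaving exactly the $q$ transverse directions of $(M,\FF)$. A lesser subtlety is that the entries of $R^{\flat}$ are not $\GL^{+}(q,\RB)$-invariant, so the identity should be proved pointwise on $\Fr^{+}(N)$ (or transported through local sections as above) rather than pushed down to $\mathrm{End}(N)$-valued forms on $M$ except for invariant polynomials. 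Everything else is routine manipulation of interior products and the structure equation.
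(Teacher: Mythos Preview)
Your argument is correct and is exactly the ``easy local coordinate calculation'' the paper alludes to; the paper does not give its own proof but simply cites Bott, so there is nothing further to compare against. The invariant version you give --- horizontality removes the $q^{2}$ vertical directions, the Bott condition plus involutivity of $T\FF_{\Fr^{+}(N)}$ removes the pure-leafwise component of $R^{\flat}$, and what survives lies in the ideal generated by the $q$-dimensional annihilator $W_{\phi}$ --- is the standard argument, and your coordinate rephrasing at the end is precisely the one the paper has in mind.

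One slip to fix: where you write that the second vanishing property gives ``no term lying in $\Lambda^{2}$ of the covectors dual to $H_{\phi}$'', you mean dual to $T\FF_{\Fr^{+}(N),\phi}$. With your labelling, $R^{\flat}(Y,Z)=0$ for leafwise $Y,Z$ kills the $\Lambda^{2}(T\FF_{\Fr^{+}(N),\phi})^{*}$ component, so that (after horizontality) every surviving term contains at least one factor from $W_{\phi}=H_{\phi}^{*}$; this is what makes $R^{\flat}_{\phi}\in\mathcal{I}_{\phi}$ and hence $\mathcal{I}_{\phi}^{k}=0$ for $k>q$ go through. As written, your sentence would instead force each term to contain a leafwise covector, giving only the useless bound $k>p$.
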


Bott's vanishing theorem motivates the following refinement of the Weil algebra $WO_{q}$.

\begin{defn}
	Let $\mathcal{J}$ be the differential ideal in $WO_{q}$ consisting of elements of $\RB[c_{1},\dots,c_{q}]\subset WO_{q}$ that are of degree greater than $2q$.  The \textbf{truncated Weil algebra} is the quotient $\underline{WO}_{q}:=WO_{q}/\mathcal{J}$.
\end{defn}

The following result is now an immediate consequence of Bott's vanishing theorem, together with Corollary \ref{glweiluni}.

\begin{thm}\label{classchar}
	A choice of Bott connection form $\alpha^{\flat}$ determines a homomorphism $\psi_{\alpha^{\flat}}:WO_{q}\rightarrow\Omega^{*}(\Fr^{+}(N)/\SO(q,\RB))$ of differential graded algebras, that factors through the truncated Weil algebra $\underline{WO}_{q}$.  That is, letting $p:WO_{q}\rightarrow\underline{WO}_{q}$ denote the projection, there is a homomorphism $\phi_{\alpha^{\flat}}:\underline{WO}_{q}\rightarrow\Omega^{*}(\Fr^{+}(N)/\SO(q,\RB))$ of differential graded algebras such that the diagram
	\begin{center}
		\begin{tikzcd}
			WO_{q} \ar[r,"\psi_{\alpha^{\flat}}"] \ar[d,"p"] & \Omega^{*}(\Fr^{+}(N)/\SO(q,\RB)) \\ \underline{WO}_{q} \ar[ur,"\phi_{\alpha^{\flat}}" below]
		\end{tikzcd}
	\end{center}
	commutes.  The maps on cohomology induced by $\psi_{\alpha^{\flat}}$ and $\phi_{\alpha^{\flat}}$ do not depend on the Bott connection chosen.\qed
\end{thm}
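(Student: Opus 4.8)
The plan is to obtain $\psi_{\alpha^{\flat}}$ directly from Corollary~\ref{glweiluni}, to use Bott's vanishing theorem to push it down to the truncated Weil algebra, and then to prove connection‑independence by a geometric homotopy over an interval, which — in contrast to the abstract cochain homotopy furnished by the Weil algebra — is forced to respect the truncation.

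First I would note that, by Example~\ref{princ1}, $\Omega^{*}(\Fr^{+}(N))$ is a $\GL^{+}(q,\RB)$-differential graded algebra, and, by Example~\ref{princon}, a Bott connection form $\alpha^{\flat}$ is in particular a connection on it in the sense of Definition~\ref{algcon}, whose curvature is $R^{\flat}=d\alpha^{\flat}+\alpha^{\flat}\wedge\alpha^{\flat}$. Corollary~\ref{glweiluni} then applies verbatim and yields a homomorphism of differential graded algebras from $WO_{q}$ into the $\SO(q,\RB)$-basic subalgebra of $\Omega^{*}(\Fr^{+}(N))$, which by the final part of Example~\ref{princ1} is exactly $\Omega^{*}(\Fr^{+}(N)/\SO(q,\RB))$; this homomorphism is $\psi_{\alpha^{\flat}}$, and for what follows I only need its explicit value $c_{i}\mapsto\Tr((R^{\flat})^{i})$ on the generators $c_{i}$. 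Next, an element of the ideal $\mathcal{J}$ is a sum of terms $\xi\,c_{i_{1}}\cdots c_{i_{k}}$ with $\xi\in WO_{q}$ and $i_{1}+\cdots+i_{k}>q$, and $\psi_{\alpha^{\flat}}$ sends such a term to a wedge product in which the factor $\Tr((R^{\flat})^{i_{1}})\cdots\Tr((R^{\flat})^{i_{k}})$ is a polynomial of degree $i_{1}+\cdots+i_{k}>q$ in the components of $R^{\flat}$, hence vanishes by Bott's vanishing theorem. Therefore $\psi_{\alpha^{\flat}}$ annihilates $\mathcal{J}=\ker p$, and the universal property of the quotient produces a unique homomorphism $\phi_{\alpha^{\flat}}\colon\underline{WO}_{q}\to\Omega^{*}(\Fr^{+}(N)/\SO(q,\RB))$ of differential graded algebras with $\phi_{\alpha^{\flat}}\circ p=\psi_{\alpha^{\flat}}$, which is the asserted commuting triangle.

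For the independence statement, let $\alpha_{0}^{\flat}$ and $\alpha_{1}^{\flat}$ be two Bott connection forms and set $\tilde{\alpha}:=(1-t)\,\mathrm{pr}^{*}\alpha_{0}^{\flat}+t\,\mathrm{pr}^{*}\alpha_{1}^{\flat}$ on $\Fr^{+}(N)\times[0,1]$, with $\mathrm{pr}$ the projection onto $\Fr^{+}(N)$ and $t$ the coordinate on $[0,1]$. Since an affine combination of connection forms on a principal bundle is again a connection form, $\tilde{\alpha}$ is a connection form on the principal $\GL^{+}(q,\RB)$-bundle $\Fr^{+}(N)\times[0,1]\to M\times[0,1]$, which is the positively oriented normal frame bundle of the transversely orientable, codimension $q$ foliated manifold $(M\times[0,1],\FF\times[0,1])$; and because $\mathrm{pr}^{*}\alpha_{0}^{\flat}$ and $\mathrm{pr}^{*}\alpha_{1}^{\flat}$ annihilate both the $\partial_{t}$ direction and the leafwise directions of $\Fr^{+}(N)$, so does $\tilde{\alpha}$, making it a Bott connection form on $\Fr^{+}(N)\times[0,1]$. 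Applying the previous two steps on the product yields $\phi_{\tilde{\alpha}}\colon\underline{WO}_{q}\to\Omega^{*}((\Fr^{+}(N)\times[0,1])/\SO(q,\RB))$. Integration over the interval fibre descends to $\int_{[0,1]}\colon\Omega^{k}((\Fr^{+}(N)\times[0,1])/\SO(q,\RB))\to\Omega^{k-1}(\Fr^{+}(N)/\SO(q,\RB))$, and naturality of the construction under the $\GL^{+}(q,\RB)$-equivariant slice inclusions $\iota_{j}\colon\Fr^{+}(N)\to\Fr^{+}(N)\times\{j\}$ gives $\iota_{j}^{*}\circ\phi_{\tilde{\alpha}}=\phi_{\alpha_{j}^{\flat}}$ for $j=0,1$. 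The fibre‑integration Stokes formula then shows $C:=\int_{[0,1]}\circ\,\phi_{\tilde{\alpha}}$ satisfies $\phi_{\alpha_{1}^{\flat}}-\phi_{\alpha_{0}^{\flat}}=dC+Cd$ on $\underline{WO}_{q}$, so $\phi_{\alpha_{0}^{\flat}}$ and $\phi_{\alpha_{1}^{\flat}}$, and with them $\psi_{\alpha_{0}^{\flat}}=\phi_{\alpha_{0}^{\flat}}\circ p$ and $\psi_{\alpha_{1}^{\flat}}=\phi_{\alpha_{1}^{\flat}}\circ p$, induce the same maps on cohomology.

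The crux is this last step. The cochain homotopy between $\psi_{\alpha_{0}^{\flat}}$ and $\psi_{\alpha_{1}^{\flat}}$ supplied for free by Corollary~\ref{glweiluni} lives on $WO_{q}$ and need not descend to $\underline{WO}_{q}$, because the map $H^{*}(WO_{q})\to H^{*}(\underline{WO}_{q})$ is far from surjective — its image misses exactly the secondary classes of interest. The homotopy built above is forced to respect the truncation since the product foliation $\FF\times[0,1]$ again has codimension $q$, so Bott's vanishing theorem continues to apply along it and $C$ is defined on $\underline{WO}_{q}$ from the outset; the remaining points (that $\tilde{\alpha}$ is a Bott connection form, that fibre integration and the slice pullbacks descend to the $\SO(q,\RB)$-quotients, and the fibre‑integration Stokes identity) are routine.
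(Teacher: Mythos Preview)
Your argument is correct. The construction of $\psi_{\alpha^{\flat}}$ and its factorisation through $\underline{WO}_{q}$ are exactly what the paper invokes: it offers no proof beyond declaring the theorem ``an immediate consequence of Bott's vanishing theorem, together with Corollary~\ref{glweiluni}'', and your first two paragraphs spell out precisely that deduction.

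Where you differ is in the treatment of connection independence for $\phi_{\alpha^{\flat}}$. The paper's bare citation of Corollary~\ref{glweiluni} only literally gives a $G$-cochain homotopy on $WO_{q}$, and you are right that this does not automatically descend to $\underline{WO}_{q}$: the secondary cocycles such as $h_{1}c_{1}^{q}$ are closed only after truncation, so $H^{*}(WO_{q})\to H^{*}(\underline{WO}_{q})$ is not surjective and the abstract homotopy says nothing about them. Your remedy---passing to the codimension-$q$ product foliation $(M\times[0,1],\FF\times[0,1])$, observing that the affine path $\tilde{\alpha}$ is again a Bott connection form there, and then using fibre integration over $[0,1]$ to build a cochain homotopy directly on $\underline{WO}_{q}$---is the classical argument in the foliation literature (essentially Bott's original proof of independence for secondary classes). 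It is strictly more complete than what the paper writes down; presumably the paper expects the reader to supply exactly this step, but your explicit version closes the gap cleanly.
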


Given a choice of Bott connection form $\alpha^{\flat}$ on $\Fr^{+}(N)$, those classes determined by the range of $\phi_{\alpha^{\flat}}$ that are not contained in the Pontryagin ring $[\phi_{\alpha}(\RB[c_{1},\dots,c_{q}])]\subset H^{ev}_{dR}(M)$ for $N$ are called \emph{secondary characteristic classes}.  In particular, the \emph{Godbillon-Vey class} is the class $[\phi_{\alpha^{\flat}}(h_{1}c_{1}^{q})]\in H^{2q+1}_{dR}(\Fr^{+}(N)/\SO(q,\RB))$.

\begin{rmk}\label{pldn}\normalfont
	The fibre $\GL^{+}(q,\RB)/\SO(q,\RB)$ of $\Fr^{+}(N)/\SO(q,\RB)$ is contractible, so the total space of $\Fr^{+}(N)/\SO(q,\RB)$ has the same cohomology as $M$.  More specifically, a choice of Euclidean metric on $N$ determines a smooth section $\sigma:M\rightarrow\Fr^{+}(N)/\SO(q,\RB)$, which, together with a choice of Bott connection $\alpha^{\flat}$ on $\Fr^{+}(N)$, determines a characteristic map $\sigma^{*}\circ\phi_{\alpha^{\flat}}:\underline{WO}_{q}\rightarrow\Omega^{*}(M)$ for $M$.  The arguments of \cite[Remarque (c)]{gue} show that this characteristic map agrees on the level of differential forms with that defined by Bott \cite{bott2}.
\end{rmk}

\subsection{Chern-Weil homomorphism for Lie groupoids}

The groupoid Chern-Weil material we present in this subsection is sourced primarily from the paper \cite{chgpd}, whose historical antecedents are to be found in the papers \cite{dupont2,bss}.

Just as the classical Chern-Weil theory can be simplified and systematised by using principal $G$-bundles, Chern-Weil theory at the level of Lie groupoids is most easily studied using principal bundles over groupoids.  For the entirety of this section we let $\GG$ be a (not necessarily Hausdorff) Lie groupoid, with unit space $\GG^{(0)}$ and range and source maps $r,s$ respectively, and let $G$ be a Lie group.

\begin{defn}
	A \textbf{principal $G$-bundle over $\GG$} consists of a principal $G$-bundle $\pi:\PP^{(0)}\rightarrow\GG^{(0)}$ over $\GG^{(0)}$ together with an action $\sigma:\PP:=\GG\times_{s,\pi}\PP^{(0)}\rightarrow \PP^{(0)}$ that commutes with the right action of $G$ on $\PP^{(0)}$.  We will often refer to $\PP$ as a \textbf{principal $G$-groupoid} over $\GG$.
\end{defn}

Let us for the rest of this subsection fix a principal $G$-bundle $\pi:\PP^{(0)}\rightarrow\GG^{(0)}$ over $\GG$.  We will primarily consider the associated action groupoid $\PP:=\GG\times_{s,\pi}\PP^{(0)}$, and the resulting spaces $\PP^{(k)}$ of composable $k$-tuples of elements of $\PP$.  In order to make notation less cumbersome, for $(u_{1},\dots,u_{k})\in\GG^{(k)}$ and $p\in \PP^{(0)}_{s(u_{k})}$ we will denote the composable $k$-tuple
\[
\big((u_{1},(u_{2}\cdots u_{k})\cdot p),\,(u_{2},(u_{3}\cdots u_{k})\cdot p),\dots,(u_{k},p)\big)\in\PP^{(k)}
\]
by simply
\[
(u_{1},\dots,u_{k})\cdot p.
\]
The next result tells us that the $\PP^{(k)}$ fibre over the $\GG^{(k)}$ as principal $G$-bundles, and is a straightforward consequence of the fact that $\PP^{(0)}\rightarrow\GG^{(0)}$ is a principal $G$-bundle, together with the fact that the action of $G$ commutes with that of $\GG$.

\begin{lemma}\label{princgpd}
	Let $\pi:\PP^{(0)}\rightarrow\GG^{(0)}$ be a principal $G$-bundle over $\GG$.  Then for each $k\in\NB$ and $(u_{1},\dots,u_{k})\cdot p\in\PP^{(k)}$, the formula
	\[
	\pi^{(k)}((u_{1},\dots,u_{k})\cdot p):=(u_{1},\dots,u_{k})
	\]
	defines a principal $G$-bundle $\pi^{(k)}:\PP^{(k)}\rightarrow\GG^{(k)}$.\qed
\end{lemma}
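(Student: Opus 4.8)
The plan is to identify $\PP^{(k)}$ with an iterated fibre product over $\GG^{(0)}$ and then reduce the statement to the standard fact that a pullback of a principal $G$-bundle is again a principal $G$-bundle. First I would unwind the definition of a composable tuple in the action groupoid $\PP=\GG\times_{s,\pi}\PP^{(0)}$: an arrow of $\PP$ is a pair $(u,p)$ with $s(u)=\pi(p)$, having source $p$ and range $\sigma(u,p)=u\cdot p$ (so $\pi(u\cdot p)=r(u)$). Hence a composable $k$-tuple $\big((u_{1},p_{1}),\dots,(u_{k},p_{k})\big)\in\PP^{(k)}$ is precisely one with $p_{i}=u_{i+1}\cdot p_{i+1}$ for $1\le i\le k-1$; setting $p:=p_{k}$ one reads off that $p_{i}=u_{i+1}\cdots u_{k}\cdot p$ is determined by $(u_{1},\dots,u_{k})$ and $p$, that $(u_{1},\dots,u_{k})\in\GG^{(k)}$, and that $\pi(p)=s(u_{k})$. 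This yields a bijection
\[
\PP^{(k)}\;\xrightarrow{\ \cong\ }\;\GG^{(k)}\,{}_{\,s\circ\mathrm{pr}_{k}}\!\times_{\pi}\PP^{(0)},\qquad (u_{1},\dots,u_{k})\cdot p\;\mapsto\;\big((u_{1},\dots,u_{k}),\,p\big),
\]
where $\mathrm{pr}_{k}(u_{1},\dots,u_{k})=u_{k}$, and under this bijection the formula defining $\pi^{(k)}$ becomes the first-coordinate projection; in particular $\pi^{(k)}$ is well defined.

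Next I would verify that this bijection is a diffeomorphism. Both sides are assembled as fibre products over $\GG^{(0)}$ along the principal-bundle projection $\pi$: the manifold structure on $\PP^{(k)}$ comes from $\PP$ being a Lie groupoid (the fibre product $\GG\times_{s,\pi}\PP^{(0)}$ is transverse since $\pi$ is a submersion) and $\PP^{(k)}$ being the space of composable $k$-tuples, and inductively unwinding this presentation reproduces exactly the iterated fibre product on the right. The comparison map respects these presentations, so smoothness and smoothness of the inverse follow from associativity of fibre products along submersions; since $\GG$ need not be Hausdorff one argues locally, but no new difficulty arises. I expect this bookkeeping — matching the two smooth structures — to be the only genuinely non-formal point, and even it is routine.

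Finally, I would invoke the fact that the pullback of a principal $G$-bundle along a smooth map is a principal $G$-bundle: pulling $\pi:\PP^{(0)}\to\GG^{(0)}$ back along the smooth map $s\circ\mathrm{pr}_{k}:\GG^{(k)}\to\GG^{(0)}$ produces a principal $G$-bundle with total space $\GG^{(k)}\times_{\GG^{(0)}}\PP^{(0)}\cong\PP^{(k)}$ and projection $\pi^{(k)}$, inheriting freeness, properness and local triviality (over preimages of a trivialising cover for $\pi$) from $\PP^{(0)}\to\GG^{(0)}$. The right $G$-action is $\big((u_{1},\dots,u_{k})\cdot p\big)\cdot g=(u_{1},\dots,u_{k})\cdot(p\cdot g)$, and here one uses precisely the hypothesis that the $G$-action on $\PP^{(0)}$ commutes with the $\GG$-action: this is what guarantees the operation above preserves composability, so that it coincides with the $G$-action on $\PP^{(k)}$ induced coordinatewise from $\PP$, and $\pi^{(k)}$ is $G$-invariant. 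This completes the proof.
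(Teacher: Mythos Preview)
Your proof is correct and follows essentially the same approach as the paper, which simply remarks that the result is a straightforward consequence of $\PP^{(0)}\to\GG^{(0)}$ being a principal $G$-bundle together with the commutativity of the $G$- and $\GG$-actions, and then omits the details. Your identification of $\PP^{(k)}$ with the pullback $\GG^{(k)}\times_{\GG^{(0)}}\PP^{(0)}$ along $s\circ\mathrm{pr}_{k}$ is exactly the mechanism behind that remark.
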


\begin{rmk}\normalfont
	Note that the definition and properties of the exterior derivative $d$ on a manifold $Y$ depend only on the local structure of the manifold.  Consequently, the differential forms $(\Omega^{*}(Y),d)$ on $Y$ are a differential graded algebra even if $Y$ is only locally Hausdorff.  We will use this fact freely and without further comment in what follows.
\end{rmk}

We now have the following immediate consequence of Lemma \ref{princgpd} and Example \ref{princ1}.

\begin{cor}
	For all $k\in\NB$, the differential forms $\Omega^{*}(\PP^{(k)})$ on $\PP^{(k)}$ form a $G$-differential graded algebra.  If $K\subset G$ is a Lie subgroup, then the $K$-basic elements of $\Omega^{*}(\PP^{(k)})$ identify with the differential forms $\Omega^{*}(\PP^{(k)}/K)$ on the quotient of $\PP^{(k)}$ by the right action of $K$.\qed
\end{cor}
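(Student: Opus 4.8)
\emph{Proof proposal.} The plan is to deduce the statement directly by feeding the principal $G$-bundle $\pi^{(k)}:\PP^{(k)}\rightarrow\GG^{(k)}$ from Lemma \ref{princgpd} into the machinery of Example \ref{princ1}. First I would recall, exactly as in the remark preceding the corollary, that the differential graded algebra structure on $\Omega^{*}(Y)$ for a smooth manifold $Y$ is built entirely from local data, so that $(\Omega^{*}(\PP^{(k)}),d)$ is a bona fide differential graded algebra even though $\PP^{(k)}$ may only be locally Hausdorff. The constructions of Example \ref{princ1} then transfer verbatim: $G$ acts on $\Omega^{*}(\PP^{(k)})$ by $g\cdot\omega:=R_{g^{-1}}^{*}\omega$ via the fibrewise right $G$-action, and for $X\in\mathfrak{g}$ one sets $i_{X}$ equal to interior product with the fundamental vector field $V^{X}$. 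The axioms of Definition \ref{gdga} — graded anticommutativity of the $i_{X}$, the Cartan identity $i_{X}d+di_{X}=L_{X}$, and the equivariance $g\circ i_{X}\circ g^{-1}=i_{\Ad_{g}(X)}$ — are checked precisely as in Example \ref{princ1}, the last of these following from the relation $(dR_{g})_{p}(V^{X}_{p})=V^{\Ad_{g^{-1}}(X)}_{p\cdot g}$; since each of these is a purely local assertion, local Hausdorffness of $\PP^{(k)}$ is no obstruction.

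For the identification of basic forms, I would observe that since $\pi^{(k)}:\PP^{(k)}\rightarrow\GG^{(k)}$ is locally trivial with fibre $G$, the quotient $\PP^{(k)}/K$ by the restricted right $K$-action is locally modelled on $U\times(G/K)$ for $U$ an open subset of $\GG^{(k)}$, hence is again a (locally Hausdorff) smooth manifold carrying a principal $K$-bundle structure $\PP^{(k)}\rightarrow\PP^{(k)}/K$. The final paragraph of Example \ref{princ1} then applies word for word: a form $\tilde{\omega}$ on $\PP^{(k)}$ is $K$-basic if and only if the expression $\tilde{\omega}(X_{1}+\mathfrak{k},\dots,X_{m}+\mathfrak{k})$ is well-defined for all $X_{1},\dots,X_{m}\in T\PP^{(k)}$, pullback along $\PP^{(k)}\rightarrow\PP^{(k)}/K$ maps $\Omega^{*}(\PP^{(k)}/K)$ isomorphically onto this $K$-basic subalgebra, and the inverse is given by the formula $\omega(X_{1},\dots,X_{m}):=\tilde{\omega}(X_{1}+\mathfrak{k},\dots,X_{m}+\mathfrak{k})$.

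The only point requiring any genuine attention — and thus the notional main obstacle — is confirming that the terms ``principal $G$-bundle'', ``quotient by $K$'', and ``$K$-basic form'' continue to behave as expected over a base that need not be Hausdorff. This is dispatched by the uniform observation that all three notions, and all the identities above, are local over the base, so that Example \ref{princ1} and its proof apply on each Hausdorff chart and the conclusions patch together. No argument beyond those already contained in Lemma \ref{princgpd} and Example \ref{princ1} is needed.
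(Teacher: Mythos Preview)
Your proposal is correct and follows precisely the route the paper takes: the corollary is stated with a \qed and is introduced as ``the following immediate consequence of Lemma \ref{princgpd} and Example \ref{princ1}'', which is exactly the two-step reduction you carry out. Your additional remarks on local Hausdorffness simply make explicit the caveat the paper handles in the preceding remark.
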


Let us now recall the definition of the de Rham cohomology of $\PP$ together with its relative versions.  Observe that there exist \emph{face maps} $\epsilon^{k}_{i}:\PP^{(k)}\rightarrow\PP^{(k-1)}$ defined for all $k>1$ and $0\leq i\leq k$ by the formulae
\[
\epsilon^{k}_{0}\big((u_{1},u_{2},\dots,u_{k})\cdot p\big):=(u_{2},\dots,u_{k})\cdot p,
\]
\[
\epsilon^{k}_{k}\big((u_{1},\dots,u_{k})\cdot p\big):=(u_{1},\dots,u_{k-1})\cdot (u_{k}\cdot p),
\]
and
\[
\epsilon^{k}_{i}\big((u_{1},\dots,u_{k})\cdot p\big):=(u_{1},\dots,u_{i}u_{i+1},\dots,u_{k})\cdot p
\]
for $1\leq i\leq k-1$.  For $k=1$, we obtain the range and source maps $\epsilon^{1}_{0}:=s:\PP\rightarrow \PP^{(0)}$ and $\epsilon^{1}_{1}:=r:\PP\rightarrow \PP^{(0)}$. 

\begin{rmk}\normalfont
	The collection $\{\PP^{(k)}\}_{k\geq 0}$, taken together with the face maps $\epsilon^{k}_{i}:\PP^{(k)}\rightarrow\PP^{(k-1)}$, is known in the literature as the \emph{nerve} $N\PP$ of $\PP$.  The nerve $N\PP$ is an example of a (semi) \emph{simplicial manifold} (the terminology used depends on the author).  It is in the general setting of simplicial manifolds that most of the technology in this section was developed by Bott-Shulman-Stasheff \cite{bss} and Dupont \cite{dupont,dupont2}.
\end{rmk}

The next result is an immediate consequence of the fact that the action of $\GG$ on $\PP^{(0)}$ commutes with the right action of $G$ on $\PP^{(0)}$.

\begin{lemma}\label{actionsonk}
	The face maps $\epsilon^{k}_{i}:\PP^{(k)}\rightarrow \PP^{(k-1)}$ commute with the actions of $G$ on $\PP^{(k)}$ and $\PP^{(k-1)}$.\qed
\end{lemma}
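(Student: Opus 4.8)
The plan is to first spell out the right $G$-action on each $\PP^{(k)}$ in the shorthand notation introduced above, and then observe that every face map is manifestly equivariant for it. By Lemma \ref{princgpd} each $\PP^{(k)}$ is a principal $G$-bundle over $\GG^{(k)}$ via $\pi^{(k)}$, and unwinding the shorthand shows that the right $G$-action is simply
\[
\big((u_{1},\dots,u_{k})\cdot p\big)\cdot g \;=\; (u_{1},\dots,u_{k})\cdot(p\cdot g),
\]
acting only on the ``base point'' $p\in\PP^{(0)}_{s(u_{k})}$. The fact that the right-hand side is again a well-defined composable $k$-tuple over $(u_{1},\dots,u_{k})$ already uses the standing hypothesis that the $\GG$-action $\sigma$ on $\PP^{(0)}$ commutes with the right $G$-action: since $(u_{j}\cdots u_{k})\cdot(p\cdot g) = \big((u_{j}\cdots u_{k})\cdot p\big)\cdot g$ for every $1\leq j\leq k$, each entry $(u_{j},(u_{j+1}\cdots u_{k})\cdot p)$ is sent to $(u_{j},(u_{j+1}\cdots u_{k})\cdot p)\cdot g$, so the $G$-action is indeed entry-by-entry.

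Granting this description, the verification is a line-by-line inspection. For $\epsilon^{k}_{0}$ and for $\epsilon^{k}_{i}$ with $1\leq i\leq k-1$, the defining formula leaves $p$ untouched and merely deletes or concatenates some of the $u_{i}$; equivariance is then immediate from the displayed formula for the action. The only face map whose formula alters $p$ is $\epsilon^{k}_{k}$, which replaces $p$ by $u_{k}\cdot p$; here
\[
\epsilon^{k}_{k}\big(((u_{1},\dots,u_{k})\cdot p)\cdot g\big) = (u_{1},\dots,u_{k-1})\cdot\big(u_{k}\cdot(p\cdot g)\big) = (u_{1},\dots,u_{k-1})\cdot\big((u_{k}\cdot p)\cdot g\big),
\]
using commutativity of the $\GG$- and $G$-actions once more, and the last expression is exactly $\big(\epsilon^{k}_{k}((u_{1},\dots,u_{k})\cdot p)\big)\cdot g$. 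The boundary case $k=1$ is identical: $\epsilon^{1}_{0}=s$ sends $(u,p)\mapsto p$ and $\epsilon^{1}_{1}=r$ sends $(u,p)\mapsto u\cdot p$, and both are $G$-equivariant by the same two observations. Since all maps in sight are defined pointwise, no Hausdorffness hypothesis on $\GG$ is needed.

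There is no genuine obstacle here; the entire content of the lemma is the commutativity of $\sigma$ with the right $G$-action on $\PP^{(0)}$, which is part of the definition of a principal $G$-bundle over $\GG$. The one point that requires care is bookkeeping: one must keep track of the $\GG$-actions appearing implicitly in the other slots of a composable tuple when checking that $(u_{1},\dots,u_{k})\cdot(p\cdot g)$ really is the result of acting by $g$ on $(u_{1},\dots,u_{k})\cdot p$ in each coordinate.
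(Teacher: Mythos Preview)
Your proof is correct and follows exactly the approach the paper indicates: the lemma is stated there as an immediate consequence of the commutativity of the $\GG$-action on $\PP^{(0)}$ with the right $G$-action, and you have simply made that verification explicit for each face map. There is nothing to add.
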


Since each $\PP^{(k)}$ is a manifold, the exterior derivative $d:\Omega^{*}(\PP^{(k)})\rightarrow\Omega^{*+1}(\PP^{(k)})$ is defined for all $k\geq0$ and satisfies the usual property $d^{2} = 0$.  The exterior derivative will form the vertical differential of the double complex from which we will construct the de Rham cohomology of $\PP$.  To obtain the horizontal differential, notice that the face maps $\epsilon^{k}_{i}:\PP^{(k)}\rightarrow\PP^{(k-1)}$ allow us to define a natural map $\partial:\Omega^{*}(\PP^{(k-1)})\rightarrow\Omega^{*}(\PP^{(k)})$ via an alternating sum of pullbacks
\[
\partial\omega:=\sum_{i=0}^{k}(-1)^{i}(\epsilon^{k}_{i})^{*}\omega.
\]
A routine calculation shows that $\partial^{2} = 0$.  By Lemma \ref{actionsonk} the coboundary mapping $\partial:\Omega^{*}(\PP^{(k-1)})\rightarrow\Omega^{*}(\PP^{(k)})$ preserves $K$-basic elements for any Lie subgroup $K$ of $G$, and we therefore obtain the double complex

\begin{center}
	\begin{tikzpicture}
	\matrix (m) [matrix of math nodes,row sep=3em,column sep=4em,minimum width=2em]
	{
		\vdots & \vdots & \vdots & \\
		\Omega^{1}(\PP^{(0)}/K) & \Omega^{1}(\PP^{(1)}/K) & \Omega^{1}(\PP^{(2)}/K) & {}\cdots\\
		\Omega^{0}(\PP^{(0)}/K) & \Omega^{0}(\PP^{(1)}/K) & \Omega^{0}(\PP^{(2)}/K) & \cdots\\
	};
	\path[-stealth]
	(m-2-1) edge node [right] {$d$} (m-1-1)
	(m-3-1) edge node [right] {$d$} (m-2-1)
	(m-2-2) edge node [right] {$d$} (m-1-2)
	(m-2-3) edge node [right] {$d$} (m-1-3)
	(m-3-2) edge node [right] {$d$} (m-2-2)
	(m-3-3) edge node [right] {$d$} (m-2-3)
	(m-2-1) edge node [above] {$\partial$} (m-2-2)
	(m-2-2) edge node [above] {$\partial$} (m-2-3)
	(m-2-3) edge node [above] {$\partial$} (m-2-4)
	(m-3-1) edge node [above] {$\partial$} (m-3-2)
	(m-3-2) edge node [above] {$\partial$} (m-3-3)
	(m-3-3) edge node [above] {$\partial$} (m-3-4);
	\end{tikzpicture}
\end{center}

\begin{defn}\label{gpdderham}
	Let $K$ be a Lie subgroup of $G$.  The double complex $(\Omega^{*}(\PP^{(*)}/K),d,\partial)$ is called the \textbf{$K$-basic de Rham complex} of the principal $G$-groupoid $\PP$.  The associated total complex is given by
	\[
	\Tot^{*}\Omega(\PP/K) = \bigoplus_{n+m = *}\Omega^{n}(\PP^{(m)}/K), \hspace{7mm} \delta|_{\Omega^{n}(\PP^{(m)}/K)}:=(-1)^{m}d+\partial.
	\]
	The cohomology of $(\Tot^{*}\Omega(\PP/K),\delta)$ is denoted by $H^{*}_{dR}(\PP/K)$ and is called the \textbf{$K$-basic de Rham cohomology} of the principal $G$-groupoid $\PP$.
\end{defn}

\begin{rmk}\normalfont
	The de Rham complex of Definition \ref{gpdderham} is frequently referred to in the literature as the \emph{Bott-Shulman-Stasheff complex} associated to the simplicial manifold $N(\PP/K)$, named after its originators \cite{bss}.  
\end{rmk}

In the same way that the exterior product of differential forms induces a multiplication in de Rham cohomology groups of any manifold, the exterior product of differential forms also induces a multiplication in the de Rham cohomology of $\GG$.

\begin{defn}Let $K$ be a Lie subgroup of $G$.  Given $\omega_{1}\in\Omega^{k}(\PP^{(m)}/K)$ and $\omega_{2}\in\Omega^{l}(\PP^{(n)}/K)$ we define the \textbf{cup product} $\omega_{1}\vee\omega_{2}\in\Omega^{k+l}(\PP^{(m+n)}/K)$ of $\omega_{1}$ and $\omega_{2}$ by
	\[
	(\omega_{1}\vee\omega_{2})_{(u_{1},\dots,u_{m+n})}:=(-1)^{kn}(p_{1}^{*}\omega_{1})_{(u_{1},\dots,u_{m+n})}\wedge (p_{2}^{*}\omega_{2})_{(u_{1},\dots,u_{m+n})},
	\]
	where $p_{1}:\PP^{(m+n)}/K\rightarrow\PP^{(m)}/K$ is given by
	\[
	p_{1}(u_{1},\dots,u_{m+n}) = \begin{cases}
	(u_{1},\dots,u_{m})&\text{ if $m\geq1$}\\
	r(u_{1})&\text{ if $m=0,  n\geq1$}\\
	\id&\text{ if $m=n=0$}
	\end{cases}
	\]
	and $p_{2}:\PP^{(m+n)}/K\rightarrow\PP^{(n)}/K$ is given by
	\[
	p_{2}(u_{1},\dots,u_{m+n}) = \begin{cases}
	(u_{m+1},\dots,u_{m+n})&\text{ if $n\geq1$}\\
	s(u_{m})&\text{ if $n=0,  m\geq1$}\\
	\id&\text{ if $m=n=0$}
	\end{cases}
	\]
\end{defn}

\begin{prop}
	The cup product descends to give a well-defined multiplication on the cohomology $H^{*}_{dR}(\PP/K)$, gifting $H^{*}_{dR}(\PP/K)$ the structure of a graded ring.
\end{prop}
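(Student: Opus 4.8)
The plan is to show that the cup product obeys the graded Leibniz rule with respect to the total differential $\delta$, so that it descends to cohomology, and then to deduce the ring axioms from their cochain-level analogues. Note first that the projections $p_{1},p_{2}$ and the face maps $\epsilon^{k}_{i}$ all commute with the right $G$-action (Lemma \ref{actionsonk}), so every identity below already makes sense on the quotients $\PP^{(\ast)}/K$.

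The central step is to establish, for $\omega_{1}\in\Omega^{k}(\PP^{(m)}/K)$ and $\omega_{2}\in\Omega^{l}(\PP^{(n)}/K)$, the Leibniz identity
\[
\delta(\omega_{1}\vee\omega_{2}) = (\delta\omega_{1})\vee\omega_{2} + (-1)^{k+m}\,\omega_{1}\vee(\delta\omega_{2}).
\]
Since $\delta = (-1)^{(\mathrm{simplicial\ degree})}d+\partial$, this splits into an exterior-derivative part and a simplicial part. The exterior-derivative part is formal: $p_{1},p_{2}$ are smooth, so $d$ commutes with $p_{1}^{\ast},p_{2}^{\ast}$, and combining the graded Leibniz rule for $\wedge$ with the Koszul sign $(-1)^{kn}$ in the definition of $\vee$ gives $d(\omega_{1}\vee\omega_{2}) = (-1)^{n}(d\omega_{1})\vee\omega_{2} + (-1)^{k}\omega_{1}\vee(d\omega_{2})$. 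The simplicial part is the combinatorial heart of the argument. I would first record the identities expressing the face maps $\epsilon^{m+n+1}_{i}$ of the nerve in terms of $p_{1},p_{2}$: denoting by $q_{1},q_{1}'$ the front projections of $\PP^{(m+n+1)}/K$ onto $\PP^{(m+1)}/K$ and $\PP^{(m)}/K$ respectively, and by $q_{2},q_{2}'$ the back projections onto $\PP^{(n)}/K$ and $\PP^{(n+1)}/K$ respectively, one has $p_{1}\circ\epsilon^{m+n+1}_{i}=\epsilon^{m+1}_{i}\circ q_{1}$ and $p_{2}\circ\epsilon^{m+n+1}_{i}=q_{2}$ for $0\leq i\leq m$, and $p_{1}\circ\epsilon^{m+n+1}_{i}=q_{1}'$ and $p_{2}\circ\epsilon^{m+n+1}_{i}=\epsilon^{n+1}_{i-m}\circ q_{2}'$ for $m+1\leq i\leq m+n+1$, with the two boundary faces matching via $\epsilon^{m+1}_{m+1}\circ q_{1}=q_{1}'$ and $\epsilon^{n+1}_{0}\circ q_{2}'=q_{2}$. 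Substituting these into $\partial(\omega_{1}\vee\omega_{2})=\sum_{i}(-1)^{i}(\epsilon^{m+n+1}_{i})^{\ast}(\omega_{1}\vee\omega_{2})$ and splitting the sum at $i=m$, the front terms reassemble into $(\partial\omega_{1})\vee\omega_{2}$, the back terms into $(-1)^{k+m}\omega_{1}\vee(\partial\omega_{2})$, and the two leftover boundary contributions cancel because $(-1)^{m}+(-1)^{m+1}=0$; this yields $\partial(\omega_{1}\vee\omega_{2})=(\partial\omega_{1})\vee\omega_{2}+(-1)^{k+m}\omega_{1}\vee(\partial\omega_{2})$. Combining the two parts, with the sign $(-1)^{(\mathrm{simplicial\ degree})}$ carried by $\delta$, produces the displayed identity. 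I expect the index-and-sign bookkeeping of this simplicial computation, in particular verifying the face identities and dealing with the degenerate clauses $m=0$, $n=0$ in the definition of $p_{1},p_{2}$, to be the only genuine obstacle; everything else is routine.

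Granting the Leibniz rule, the standard homological argument applies verbatim: $\vee$ carries a pair of $\delta$-cocycles to a $\delta$-cocycle, and carries a cocycle cupped with a coboundary, or a coboundary cupped with a cocycle, into the image of $\delta$; hence $\vee$ descends to a well-defined bilinear pairing $H^{\ast}_{dR}(\PP/K)\otimes H^{\ast}_{dR}(\PP/K)\to H^{\ast}_{dR}(\PP/K)$ respecting the total grading, and bilinearity of $\vee$ on cochains gives the distributive laws at once. For associativity I would compute $(\omega_{1}\vee\omega_{2})\vee\omega_{3}$ and $\omega_{1}\vee(\omega_{2}\vee\omega_{3})$ straight from the definition: both equal, up to the product of the relevant Koszul signs, the wedge of the pullbacks of $\omega_{1},\omega_{2},\omega_{3}$ along the three consecutive-block projections of $\PP^{(m+n+p)}/K$ onto $\PP^{(m)}/K,\PP^{(n)}/K,\PP^{(p)}/K$, and one checks that the accumulated signs coincide, so $\vee$ is already associative on cochains. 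Finally the constant function $1\in\Omega^{0}(\PP^{(0)}/K)$ is a $\delta$-cocycle and a two-sided unit for $\vee$ on cochains, hence its class is a unit. Together these facts exhibit $\big(H^{\ast}_{dR}(\PP/K),\vee\big)$ as a graded ring, which is the assertion of the proposition.
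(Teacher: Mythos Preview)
Your proposal is correct and follows essentially the same strategy as the paper: the paper's proof is a terse sketch that points to the Leibniz rule for $d$ and to the fact that $p_{1}$ and $p_{2}$ factor as compositions of face maps, which is precisely the ingredient underlying the simplicial identities you write out. You have simply filled in the combinatorics and the standard homological consequences that the paper leaves to the reader.
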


\begin{proof}
	The cup product respects the bi-grading of $\Omega^{*}(\PP^{(*)}/K)$ by definition.  Well-definedness on $H^{*}_{dR}(\PP/K)$ follows from the Liebniz rule for the exterior derivative, and from noting that we may rewrite $p_{1}:\PP^{(m+n)}/K\rightarrow\PP^{(m)}/K$ as
	\[
	p_{1} = \epsilon^{m+1}_{m+1}\circ\cdots\circ\epsilon^{m+i}_{m+i}\circ\cdots\circ\epsilon^{m+n}_{m+n}
	\]
	and $p_{2}:\PP^{(m+n)}/K\rightarrow \PP^{(n)}/K$ as
	\[
	p_{2} = \epsilon^{n+1}_{0}\circ\cdots\circ\epsilon^{n+i}_{0}\circ\cdots\circ\epsilon^{n+m}_{0},
	\]
	where the $\epsilon^{k}_{i}:\PP^{(k)}\rightarrow\PP^{(k-1)}$ are the face maps.
\end{proof}

For our characteristic map we will also need a particular differential graded algebra which encodes the face maps of the $\PP^{(k)}$ into those of the standard simplices.  For $k\in\NB$, let $\Delta^{k}$ denote the standard $k$-simplex
\begin{equation}\label{simplex}
\Delta^{k}:=\bigg\{(t_{0},t_{1},\dots,t_{k})\in[0,1]^{k+1}:\sum_{i=1}^{k}t_{i} = 1\bigg\}.
\end{equation}
We have face maps $\tilde{\epsilon}^{k}_{i}:\Delta^{k-1}\rightarrow\Delta^{k}$ defined for all $k>1$ and $1\leq i\leq k$ by the formulae
\[
\tilde{\epsilon}^{k}_{i}(t_{0},\dots,t_{k-1}):=(t_{0},\dots,t_{i-1},0,t_{i},\dots,t_{k-1}).
\]
and for $i=0$ by simply
\[
\tilde{\epsilon}^{k}_{0}(t_{0},\dots,t_{k-1}):=(0,t_{0},\dots,t_{k-1}).
\]

\begin{defn}
	For $l\in\NB$, a \textbf{simplicial $l$-form} on $\PP$ is a sequence $\omega = \{\omega^{(k)}\}_{k\in\NB}$ of differential $l$-forms $\omega^{(k)}\in\Omega^{l}(\Delta^{k}\times\PP^{(k)})$ such that
	\[
	(\tilde{\epsilon}^{k}_{i}\times\id)^{*}\omega^{(k)} = (\id\times\epsilon^{k}_{i})^{*}\omega^{(k-1)}\in\Omega^{l}(\Delta^{(k-1)}\times\PP^{(k)})
	\]
	for all $i = 0,\dots,k$ and for all $k\in\NB$.  We denote the space of all simplicial $l$-forms on $\PP$ by $\Omega^{l}_{\Delta}(\PP)$.
\end{defn}

\begin{rmk}\normalfont
	One identifies $(t,\epsilon^{k}_{i}(v))\in\Delta^{k-1}\times\PP^{(k-1)}$ with $(\tilde{\epsilon}^{k}_{i}(t),v)\in\Delta^{k}\times\PP^{(k)}$ for all $k>0$, and defines the \emph{fat realisation} of $\|N\PP\|$ of $N\PP$ to be the space
	\[
	\|N\PP\|:=\bigsqcup_{k\geq0}\bigg(\Delta^{k}\times\PP^{(k)}\bigg)/\sim.
	\]
	The fat realisation is a geometric realisation of the \emph{classifying space} $B\PP$ of the groupoid $\PP$ \cite{segal}, and is not generally a manifold even though each of its ``layers" $\Delta^{k}\times\PP^{(k)}$ is.  Simplicial differential forms were defined by Dupont \cite{dupont2} so as to descend to ``forms on $B\PP$".  We will see shortly that together with the usual de Rham differential, simplicial differential forms define a differential graded algebra whose cohomology can be taken as the definition of the cohomology of the classifying space.
\end{rmk}

Importantly, simplicial differential forms on $\PP$ determine a differential graded algebra which will be instrumental in the construction of our characteristic map.  The next result follows from routine verification.

\begin{prop}\label{dgaofsimp}
	The wedge product and exterior derivative of differential forms on the manifolds $\Delta^{k}\times\PP^{(k)}$ together with the action of $G$ on the principal $G$-bundle $\PP^{(k)}$, make the space $\Omega^{*}_{\Delta}(\PP)$ of all simplicial differential forms on $\PP$ into a $G$-differential graded algebra.  If $K$ is any Lie subgroup of $G$, then the subcomplex $(\Omega^{*}_{\Delta}(\PP)_{K-basic},d)$ of $(\Omega^{*}_{\Delta}(\PP),d)$ coincides with the complex $(\Omega^{*}_{\Delta}(\PP/K),d)$ of simplicial differential forms on the groupoid $\PP/K$.\qed
\end{prop}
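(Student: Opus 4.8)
The plan is to install every piece of structure on $\Omega^{*}_{\Delta}(\PP)$ \emph{levelwise}, so that the $G$-differential graded algebra axioms reduce to facts already known on each space $\Delta^{k}\times\PP^{(k)}$, and then to check only that the levelwise operations respect the simplicial compatibility condition. Since $\PP^{(k)}\to\GG^{(k)}$ is a principal $G$-bundle by Lemma \ref{princgpd}, so is $\Delta^{k}\times\PP^{(k)}\to\Delta^{k}\times\GG^{(k)}$ (with $G$ acting trivially on the simplex factor), and hence by Example \ref{princ1} each $(\Omega^{*}(\Delta^{k}\times\PP^{(k)}),d,i)$ is a $G$-differential graded algebra, the fundamental vector field of $X\in\mathfrak{g}$ being $(0,V^{X})$ and $i_{X}$ being interior multiplication by it. For simplicial forms $\omega=\{\omega^{(k)}\}$, $\eta=\{\eta^{(k)}\}$ I would then define $(\omega\wedge\eta)^{(k)}:=\omega^{(k)}\wedge\eta^{(k)}$, $(d\omega)^{(k)}:=d\omega^{(k)}$, $(g\cdot\omega)^{(k)}:=g\cdot\omega^{(k)}$ and $(i_{X}\omega)^{(k)}:=i_{X}\omega^{(k)}$.

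The only thing requiring proof for the first assertion is that each resulting sequence is again a simplicial form, i.e.\ satisfies the face compatibility relation defining $\Omega^{*}_{\Delta}(\PP)$. For $\wedge$ and $d$ this is immediate, since pullback is an algebra homomorphism commuting with $d$. For the $G$-action and the contractions it follows from Lemma \ref{actionsonk}: because $\epsilon^{k}_{i}$ is $G$-equivariant, pullback along $\id\times\epsilon^{k}_{i}$ commutes with the $G$-action; differentiating $\epsilon^{k}_{i}(p\cdot g)=\epsilon^{k}_{i}(p)\cdot g$ shows $d\epsilon^{k}_{i}$ carries the fundamental field $V^{X}$ on $\PP^{(k)}$ to that on $\PP^{(k-1)}$, so pullback along $\id\times\epsilon^{k}_{i}$ commutes with $i_{X}$ too; and the maps $\tilde{\epsilon}^{k}_{i}\times\id$ are trivially $G$-equivariant with differentials sending $(0,V^{X})$ to $(0,V^{X})$, giving the same conclusions on the simplex side. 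Substituting these compatibilities into the simplicial identity for $\omega$ (and $\eta$) produces the simplicial identity for $\omega\wedge\eta$, $d\omega$, $g\cdot\omega$ and $i_{X}\omega$. Every axiom of Definition \ref{gdga} --- graded-commutativity, the Leibniz rule, $d^{2}=0$, $G$ acting by grading-preserving automorphisms that commute with $d$, and $i_{X}i_{Y}=-i_{Y}i_{X}$, $g\circ i_{X}\circ g^{-1}=i_{\Ad_{g}(X)}$, $i_{X}d+di_{X}=L_{X}$ --- then holds because it holds levelwise on each $\Delta^{k}\times\PP^{(k)}$ and all of the operations, including $L_{X}$ (which equals $\{L_{X}\omega^{(k)}\}$ since $\exp(tX)\cdot\omega=\{\exp(tX)\cdot\omega^{(k)}\}$), are defined levelwise.

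For the second assertion, note that since the $G$-action and the $i_{X}$ are levelwise, $\omega=\{\omega^{(k)}\}$ is $K$-basic in the sense of Definition \ref{basic} precisely when every $\omega^{(k)}$ is $K$-basic in $\Omega^{*}(\Delta^{k}\times\PP^{(k)})$, which by Example \ref{princ1} happens precisely when every $\omega^{(k)}$ is the pullback $q_{k}^{*}\bar{\omega}^{(k)}$ of a form on $\Delta^{k}\times(\PP^{(k)}/K)$ along the $K$-quotient map $q_{k}$. The $k$-th space of the nerve of $\PP/K$ is $\PP^{(k)}/K$, with face maps $\bar{\epsilon}^{k}_{i}$ induced from the $\epsilon^{k}_{i}$ (well-defined because the face maps commute with the $G$-, hence $K$-, action). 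Using the commuting squares $q_{k}\circ(\tilde{\epsilon}^{k}_{i}\times\id)=(\tilde{\epsilon}^{k}_{i}\times\id)\circ q$ and $q_{k-1}\circ(\id\times\epsilon^{k}_{i})=(\id\times\bar{\epsilon}^{k}_{i})\circ q$, where $q$ denotes the $K$-quotient over $\Delta^{k-1}\times\PP^{(k)}$, together with the injectivity of pullback along $q$, the simplicial identity for $\{\omega^{(k)}\}$ is seen to be equivalent to that for $\{\bar{\omega}^{(k)}\}$. Hence $\omega\mapsto\{\bar{\omega}^{(k)}\}$ is a bijection $\Omega^{*}_{\Delta}(\PP)_{K-basic}\to\Omega^{*}_{\Delta}(\PP/K)$, and it intertwines the de Rham differentials since $q_{k}^{*}d=dq_{k}^{*}$, which is the asserted identification of complexes.

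The only point I expect to require genuine care is the repeated verification that pullback along the face maps $\tilde{\epsilon}^{k}_{i}\times\id$ and $\id\times\epsilon^{k}_{i}$ commutes both with the contraction operators $i_{X}$ and with the $K$-quotient maps; each such instance reduces to $G$-equivariance of the face maps (Lemma \ref{actionsonk}) and the elementary naturality of pullback, interior multiplication and $d$, so nothing deep is involved. Everything else in the proof is formal bookkeeping across the levels of the nerve.
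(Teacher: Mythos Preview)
Your proof is correct and follows essentially the same approach as the paper's: install the wedge product, exterior derivative, $G$-action and contractions $i_{X}$ levelwise on each $\Delta^{k}\times\PP^{(k)}$, then verify that the simplicial compatibility relation is preserved using naturality of pullback together with the $G$-equivariance of the face maps (Lemma \ref{actionsonk}), and finally identify $K$-basic simplicial forms with simplicial forms on $\PP/K$ via Example \ref{princ1}. Your treatment of the second assertion is in fact somewhat more explicit than the paper's, which simply refers back to Example \ref{princ1} without spelling out the commuting squares with the quotient maps.
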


In order to relate simplicial differential forms to the de Rham cohomology of $\PP$, we make use of the integration over the fibres map.  The next result is proved in \cite[Theorem 2.3, Theorem 2.14]{dupont2}.

\begin{prop}\label{intoverfibre}
	Let $K$ be a Lie subgroup of $G$. Then the map $I:\Omega^{*}_{\Delta}(\PP/K)\rightarrow\Tot^{*}\Omega(\PP/K)$ defined by
	\[
	I(\omega):=\sum_{l\in\NB}\int_{\Delta^{l}}\omega^{(l)}
	\]
	is a map of cochain complexes.  Moreover the map determined by $I$ on cohomology is a homomorphism of rings, where the ring structure on $H^{*}(\Omega^{*}_{\Delta}(\PP/K))$ is induced by the wedge product and where the ring structure on $H^{*}_{dR}(\PP/K)$ is induced by the cup product.\qed
\end{prop}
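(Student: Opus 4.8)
The plan is to prove that $I$ is a cochain map by a direct computation, and then to upgrade to the ring-homomorphism statement by a standard argument using the product structure on the fat realisation. The two assertions are genuinely separate in character: the first is a Stokes-type bookkeeping argument, while the second is essentially the Eilenberg--Zilber philosophy adapted to simplicial manifolds.

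For the cochain-map claim, fix a simplicial $l$-form $\omega = \{\omega^{(k)}\}_{k\in\NB}$ with $\omega^{(k)}\in\Omega^{l}(\Delta^{k}\times\PP^{(k)})$. First I would decompose each $\omega^{(k)}$ into its bidegree components with respect to the product $\Delta^{k}\times\PP^{(k)}$, writing $d = d_{\Delta} + d_{\PP}$ for the two partial exterior derivatives. Then $\int_{\Delta^{k}}d\omega^{(k)} = \int_{\Delta^{k}}d_{\PP}\omega^{(k)} + \int_{\Delta^{k}}d_{\Delta}\omega^{(k)}$, and since integration over $\Delta^{k}$ commutes with $d_{\PP}$ this first term contributes $(-1)^{k}d$ applied to $\int_{\Delta^{k}}\omega^{(k)}$ (the sign coming from pushing $d_{\PP}$ past the $\Delta^{k}$-degree, matching the sign in Definition \ref{gpdderham}). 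For the second term, Stokes' theorem on $\Delta^{k}$ gives $\int_{\Delta^{k}}d_{\Delta}\omega^{(k)} = \pm\int_{\partial\Delta^{k}}\omega^{(k)} = \pm\sum_{i=0}^{k}(-1)^{i}\int_{\Delta^{k-1}}(\tilde\epsilon^{k}_{i}\times\id)^{*}\omega^{(k)}$; the simplicial compatibility relation $(\tilde\epsilon^{k}_{i}\times\id)^{*}\omega^{(k)} = (\id\times\epsilon^{k}_{i})^{*}\omega^{(k-1)}$ then converts this into $\pm\sum_{i}(-1)^{i}(\epsilon^{k}_{i})^{*}\int_{\Delta^{k-1}}\omega^{(k-1)}$, which is precisely $\pm\partial$ applied to $I(\omega)$ in simplicial degree $k-1$. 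Assembling over all $k$ and checking that the signs align with the total differential $\delta = (-1)^{m}d + \partial$ shows $I\circ d = \delta\circ I$. The only subtlety here is sign-tracking, so I would fix orientation conventions on $\Delta^{k}$ at the outset and be careful about whether $d_{\Delta}$ anticommutes past forms of $\PP$-degree or $\Delta$-degree; this is the main place an error could creep in, though it is routine rather than deep.

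For the ring-homomorphism statement, the key geometric input is that the Eilenberg--Zilber/shuffle decomposition of $\Delta^{m+n}$ as a union of $(m+n)$-simplices indexed by $(m,n)$-shuffles realises the product $\|N\PP\| \times \|N\PP'\|$ structure, and correspondingly the fibre integration $\int_{\Delta^{m+n}}$ of a wedge $\omega^{(m+n)}\wedge\eta^{(m+n)}$ built from the two projections $p_{1}, p_{2}$ breaks up, via Fubini over the shuffle decomposition, into the cup product $\bigl(\int_{\Delta^{m}}\omega^{(m)}\bigr)\vee\bigl(\int_{\Delta^{n}}\eta^{(n)}\bigr)$ up to exact terms. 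Rather than reproving this, I would invoke \cite[Theorem 2.14]{dupont2} directly, noting that the cup product of Definition \ref{gpdderham} on $\Tot^{*}\Omega(\PP/K)$ is exactly Dupont's and that the wedge product on $\Omega^{*}_{\Delta}(\PP/K)$ is the one appearing there; the $K$-basic refinement is immediate since $I$, the wedge product, the cup product, and the face and degeneracy maps all commute with the $G$-action by Lemma \ref{actionsonk} and Proposition \ref{dgaofsimp}, so everything restricts to the $K$-basic subcomplexes. Thus the harder conceptual content is entirely contained in the cited theorem, and the work that remains on our side is the sign-checking in the cochain-map argument together with the observation that passing to $K$-basic forms is compatible with all the structure maps in play.
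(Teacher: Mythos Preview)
Your proposal is correct and follows essentially the same approach as the paper. In its final form the paper simply cites \cite[Theorem 2.3, Theorem 2.14]{dupont2} for both claims; the author's draft proof (present in the source but commented out) does exactly what you outline for the cochain-map assertion --- split $d$ into simplex-direction and $\PP^{(k)}$-direction parts, apply Stokes on $\Delta^{k}$ together with the simplicial compatibility relation to recover $\partial$, and commute the $\PP$-direction derivative past the fibre integral to recover $d$ with the appropriate sign --- and likewise defers the multiplicative statement to \cite[Theorem 2.14]{dupont2}.
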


\begin{rmk}\normalfont
	When $\GG$ is Hausdorff, the map $I:\Omega^{*}_{\Delta}(\PP/K)\rightarrow\Tot^{*}\Omega(\PP/K)$ descends to an isomorphism on cohomology \cite[Theorem 2.3]{dupont2}.  Thus for Hausdorff $\GG$ the double complex $\Omega^{*}(\GG^{(*)})$ computes the cohomology of the classifying space $B\GG$.
\end{rmk}

Before we give the characteristic map, we need to show that a connection form on $\PP^{(0)}$ induces a connection on the differential graded algebra $\Omega^{*}_{\Delta}(\PP)$.  The universal property of the Weil algebra $W(\mathfrak{g})$ (Theorem \ref{weiluni}) will then guarantee a homomorphism from $W(\mathfrak{g})$ to $\Omega^{*}_{\Delta}(\PP)$ which, composed with the cochain map $I$, will give us our characteristic map.

\begin{cons}\label{pki}\normalfont For each $0\leq i\leq k$, define $p^{k}_{i}:\PP^{(k)}\rightarrow\PP^{(0)}$ by
	\[
	p^{k}_{0}((u_{1},\dots,u_{k})\cdot p):=(u_{1}\cdots u_{k})\cdot p,
	\]
	\[
	p^{k}_{k}((u_{1},\dots,u_{k})\cdot p):=p,
	\]
	and
	\[
	p^{k}_{i}((u_{1},\dots,u_{k})\cdot p):=(u_{i+1}\cdots u_{k})\cdot p.
	\]
	for all $1\leq i \leq k-1$.  Since the range and source maps are $G$-equivariant, so too are the maps $p^{k}_{i}$.  Given a connection form $\alpha\in\Omega^{1}(\PP^{(0)};\mathfrak{g})$, for each $k\in\NB$ we define a differential form $\alpha^{(k)}\in\Omega^{1}(\Delta^{k}\times\PP^{(k)})$ by the formula
	\begin{equation}\label{bob}
	\alpha^{(k)}_{(t_{0},\dots,t_{k};(u_{1},\dots,u_{k})\cdot p)}:=\sum_{i=0}^{k}t_{i}((p^{k}_{i})^{*}\alpha)_{(u_{1},\dots,u_{k})\cdot p}.
	\end{equation}
\end{cons}

The next lemma now follows from routine calculations (see also \cite[Proposition 5.3]{chgpd}).

\begin{lemma}\label{con1}
	The sequence $\tilde{\alpha}:=\{\alpha^{(k)}\}_{k\in\NB}$ of 1-forms $\alpha^{(k)}\in\Omega^{1}(\Delta^{k}\times\PP^{(k)})\otimes\mathfrak{g}$ determines a connection on the differential graded algebra $\Omega^{*}_{\Delta}(\PP)$.\qed
\end{lemma}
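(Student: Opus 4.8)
The plan is to verify directly that the sequence $\tilde{\alpha} = \{\alpha^{(k)}\}_{k\in\NB}$ satisfies the three conditions required of a connection on the $G$-differential graded algebra $\Omega^{*}_{\Delta}(\PP)$: first, that each $\alpha^{(k)}$ is genuinely a simplicial $1$-form (i.e.\ the collection is compatible with the face maps $\tilde{\epsilon}^{k}_{i}$ and $\epsilon^{k}_{i}$), second, that $\tilde{\alpha}$ is invariant under the $G$-action of Proposition \ref{dgaofsimp}, and third, that $i_{X}\tilde{\alpha} = 1\otimes X$ for all $X\in\mathfrak{g}$ in the sense of Definition \ref{algcon}.

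For the simplicial-form compatibility, I would fix $k>0$ and $0\leq i\leq k$ and compare $(\tilde{\epsilon}^{k}_{i}\times\id)^{*}\alpha^{(k)}$ with $(\id\times\epsilon^{k}_{i})^{*}\alpha^{(k-1)}$. The key combinatorial input is that the maps $p^{k}_{j}:\PP^{(k)}\rightarrow\PP^{(0)}$ of Construction \ref{pki} are compatible with the face maps, namely $p^{k}_{j}\circ\epsilon^{k}_{i}$ equals $p^{k-1}_{j}$ or $p^{k-1}_{j-1}$ according to whether $j\leq i$ or $j>i$ (one checks this from the explicit formulae for $\epsilon^{k}_{i}$ and $p^{k}_{j}$, using that composing adjacent arrows or deleting an end arrow doesn't change the relevant ``tail'' element of $\PP^{(0)}$), while $\tilde{\epsilon}^{k}_{i}$ inserts a $0$ in the $i$-th barycentric coordinate, which is exactly what makes the two substitutions in the barycentric sum $\sum_{j} t_{j}(p^{k}_{j})^{*}\alpha$ agree term-by-term. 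This is the bookkeeping heart of the lemma and, while not deep, it is the step most likely to require care with indexing conventions.

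For $G$-invariance, I would use that each $p^{k}_{j}$ is $G$-equivariant (noted in Construction \ref{pki}, since range and source are $G$-equivariant and the $p^{k}_{j}$ are built from composition and projection), that $\alpha$ is a connection form hence satisfies $g\cdot\alpha = \Ad_{g}(R_{g^{-1}}^{*}\alpha) = \alpha$ as in Example \ref{princon}, and that $G$ acts trivially on the simplex factor $\Delta^{k}$; pulling back through $p^{k}_{j}$ and chasing the $\Ad_{g}$ twist shows $g\cdot\alpha^{(k)} = \alpha^{(k)}$ in the twisted sense $g\cdot(a\otimes X) = (g\cdot a)\otimes\Ad_{g}(X)$. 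For the vertical condition, the fundamental vector field $V^{X,k}$ on $\Delta^{k}\times\PP^{(k)}$ has zero component in the $\Delta^{k}$ direction and projects under $dp^{k}_{j}$ to the fundamental vector field $V^{X}$ on $\PP^{(0)}$ for every $j$ (again by $G$-equivariance of $p^{k}_{j}$); hence $\iota_{V^{X,k}}\alpha^{(k)} = \sum_{j=0}^{k} t_{j}\,\alpha(V^{X}) = \big(\sum_{j=0}^{k} t_{j}\big)X = X$, using the defining constraint $\sum t_{j} = 1$ of $\Delta^{k}$ from Equation \eqref{simplex}, so $i_{X}\tilde{\alpha} = 1\otimes X$. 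Assembling these three verifications, together with the fact established in Proposition \ref{dgaofsimp} that $\Omega^{*}_{\Delta}(\PP)$ is a $G$-differential graded algebra, gives the claim; I expect no genuine obstacle beyond disciplined index-chasing, and I would otherwise simply refer to \cite[Proposition 5.3]{chgpd} as the excerpt already does.
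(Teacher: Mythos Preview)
Your proposal is correct and follows essentially the same route as the paper's (commented-out) proof: verify simplicial compatibility of the $\alpha^{(k)}$ via the combinatorics relating the $p^{k}_{j}$ to the face maps, then check $G$-invariance and verticality using $G$-equivariance of the $p^{k}_{j}$ and the constraint $\sum_{j} t_{j}=1$. One small slip to fix when you write it up: the composition $p^{k}_{j}\circ\epsilon^{k}_{i}$ does not typecheck (since $\epsilon^{k}_{i}$ lands in $\PP^{(k-1)}$); the identity you want is $p^{k-1}_{l}\circ\epsilon^{k}_{i} = p^{k}_{l}$ for $l<i$ and $= p^{k}_{l+1}$ for $l\geq i$, which is exactly what matches the insertion of a zero by $\tilde{\epsilon}^{k}_{i}$ in the barycentric sum.
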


Finally we can give the characteristic map as in \cite{chgpd}.

\begin{thm}\label{charmap}
	A choice of connection form $\alpha\in\Omega^{1}(\PP^{(0)};\mathfrak{g})$ determines, for any Lie subgroup $K$ of $G$, a homomorphism
	\[
	\varPsi^{\GG}_{\alpha}:W(\mathfrak{g},K)\rightarrow\Omega^{*}_{\Delta}(\PP/K)
	\]
	of differential graded algebras, hence a cochain map
	\[
	\psi^{\GG}_{\alpha} = I\circ\varPsi^{\GG}_{\alpha}:W(\mathfrak{g},K)\rightarrow \Tot^{*}\Omega(\PP/K)
	\]
	of total complexes.  The induced map on cohomology is a homomorphism of graded rings and does not depend on the connection chosen.
\end{thm}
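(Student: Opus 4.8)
The plan is to obtain Theorem \ref{charmap} by chaining together the universal property of the Weil algebra with the integration-over-the-fibre map, all of the analytic content having already been isolated in the preceding results. First I would invoke Lemma \ref{con1}: the given connection form $\alpha\in\Omega^{1}(\PP^{(0)};\mathfrak{g})$ produces a connection $\tilde{\alpha}=\{\alpha^{(k)}\}_{k\in\NB}$ on the $G$-differential graded algebra $\Omega^{*}_{\Delta}(\PP)$ of simplicial differential forms on $\PP$, which is a $G$-differential graded algebra by Proposition \ref{dgaofsimp}. Applying the universal property of the Weil algebra, Theorem \ref{weiluni}, to this connection then yields a unique homomorphism $\varPsi^{\GG}_{\alpha}:=\phi_{\tilde{\alpha}}:W(\mathfrak{g})\rightarrow\Omega^{*}_{\Delta}(\PP)$ of $G$-differential graded algebras with $\varPsi^{\GG}_{\alpha}(\omega)=\tilde{\alpha}$.

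Next I would use that, being a homomorphism of $G$-differential graded algebras, $\varPsi^{\GG}_{\alpha}$ commutes with the $G$-action and with the contractions $i_{X}$ for all $X\in\mathfrak{g}$, and therefore carries $K$-basic elements to $K$-basic elements for any Lie subgroup $K\subseteq G$. By definition $W(\mathfrak{g},K)$ is the subalgebra of $K$-basic elements of $W(\mathfrak{g})$, and by Proposition \ref{dgaofsimp} one has $\Omega^{*}_{\Delta}(\PP)_{K\text{-basic}}=\Omega^{*}_{\Delta}(\PP/K)$; hence restriction gives a homomorphism of differential graded algebras $\varPsi^{\GG}_{\alpha}:W(\mathfrak{g},K)\rightarrow\Omega^{*}_{\Delta}(\PP/K)$. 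Composing with the integration-over-the-fibre map $I:\Omega^{*}_{\Delta}(\PP/K)\rightarrow\Tot^{*}\Omega(\PP/K)$, which is a map of cochain complexes by Proposition \ref{intoverfibre}, produces the desired cochain map $\psi^{\GG}_{\alpha}=I\circ\varPsi^{\GG}_{\alpha}$ of total complexes, whence a well-defined map on cohomology.

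For the ring-homomorphism claim I would keep careful track of which product lives where: $\varPsi^{\GG}_{\alpha}$ is an algebra homomorphism, so the map it induces on cohomology is a homomorphism of graded rings, where $H^{*}(\Omega^{*}_{\Delta}(\PP/K))$ carries the ring structure induced by the wedge product of simplicial forms; the second assertion of Proposition \ref{intoverfibre} says that the map induced by $I$ is a ring homomorphism from this ring to $H^{*}_{dR}(\PP/K)$ equipped with the cup-product structure. The composite $\psi^{\GG}_{\alpha}$ therefore induces a homomorphism of graded rings on cohomology. Independence of the connection then follows from the last sentence of Theorem \ref{weiluni}: if $\alpha_{0},\alpha_{1}$ are two connection forms on $\PP^{(0)}$, the associated connections $\tilde{\alpha}_{0},\tilde{\alpha}_{1}$ on $\Omega^{*}_{\Delta}(\PP)$ give $G$-cochain homotopic classifying maps $\phi_{\tilde{\alpha}_{0}},\phi_{\tilde{\alpha}_{1}}$, and by the homotopy invariance proposition for $G$-cochain homotopic homomorphisms these induce the same map on $K$-basic cohomology, that is, the same map $H^{*}(W(\mathfrak{g},K))\rightarrow H^{*}(\Omega^{*}_{\Delta}(\PP/K))$; postcomposing with the fixed map induced by $I$ gives the same map into $H^{*}_{dR}(\PP/K)$.

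I do not expect a serious obstacle: the theorem is essentially a bookkeeping assembly of Lemma \ref{con1}, Proposition \ref{dgaofsimp}, Theorem \ref{weiluni}, and Proposition \ref{intoverfibre}. The two points that need a little care are, first, confirming that the homomorphism $\varPsi^{\GG}_{\alpha}$ does restrict to the $K$-basic subcomplexes in a way compatible with the identification $\Omega^{*}_{\Delta}(\PP)_{K\text{-basic}}=\Omega^{*}_{\Delta}(\PP/K)$, and second, applying the homotopy-invariance statement at the level of $K$-basic cohomology rather than full cohomology; both are immediate from the equivariance built into the definition of a homomorphism of $G$-differential graded algebras.
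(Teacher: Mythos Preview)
Your proposal is correct and follows essentially the same route as the paper's own proof: invoke Lemma \ref{con1} and Theorem \ref{weiluni} to obtain $\varPsi^{\GG}_{\alpha}$, restrict to $K$-basic elements via Proposition \ref{dgaofsimp}, and then apply Proposition \ref{intoverfibre} both for the cochain-map property and the ring-homomorphism statement, with independence of the connection coming from the homotopy clause of Theorem \ref{weiluni}. Your write-up is more explicit about the $K$-basic restriction and the homotopy-invariance step, but the logical skeleton is identical.
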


\begin{proof}
	The existence of $\varPsi^{\GG}_{\alpha}$ follows from Lemma \ref{con1} together with Theorem \ref{weiluni}.  That $\psi^{\GG}_{\alpha}$ is a cochain map is true by Proposition \ref{intoverfibre}, while the cohomological independence of the choice of connection follows from Theorem \ref{weiluni}.  That $\psi^{\GG}_{\alpha}$ descends to a homomorphism $H^{*}(W(\mathfrak{g},K))\rightarrow H^{*}_{dR}(\PP/K)$ of graded rings follows from Proposition \ref{intoverfibre} together with the fact that $\varPsi^{\GG}_{\alpha}:W(\mathfrak{g},K)\rightarrow\Omega^{*}_{\Delta}(\PP/K)$ is a homomorphism of differential graded algebras.
\end{proof}

\section{Characteristic map for foliated manifolds}\label{sc2}

We will now use the background material of Section \ref{sc1} to prove new results on the secondary characteristic classes for the transverse frame holonomy groupoid of a foliation.  Let us consider a transversely orientable foliated manifold $(M,\FF)$ of codimension $q$ with holonomy groupoid $\GG$.  As we have already discussed in Subsection \ref{ssc2}, the principal $\GL^{+}(q,\RB)$-bundle $\pi_{\Fr^{+}(N)}:\Fr^{+}(N)\rightarrow M$ of positively oriented frames for $N$ carries a left action of the holonomy groupoid $\GG$ that commutes with the canonical right action of $\GL^{+}(q,\RB)$. 

\begin{defn}We will denote by $\GG_{1} = \GG\ltimes\Fr^{+}(N)$ the principal $\GL^{+}(q,\RB)$-groupoid over $\GG$ corresponding to the foliated principal $\GL^{+}(q,\RB)$-bundle $\pi_{\Fr^{+}(N)}:\Fr^{+}(N)\rightarrow M$.
\end{defn}

Given a connection form $\alpha\in\Omega^{1}(\Fr^{+}(N);\mathfrak{gl}(q,\RB))$, the characteristic map $\psi^{\GG}_{\alpha}$ of Theorem \ref{charmap} composes with the inclusion $WO_{q}\hookrightarrow W(\mathfrak{gl}(q,\RB),\SO(q,\RB))$ to give a cochain map $\psi^{\GG}_{\alpha}:WO_{q}\rightarrow\Omega^{*}(\GG_{1}^{(*)}/\SO(q,\RB))$, whose induced map on cohomology does not depend on the connection $\alpha$.  The image of $\psi^{\GG}_{\alpha}$ in the first-column subcomplex $\Omega^{*}(\Fr^{+}(N)/\SO(q,\RB))$ of $\Omega^{*}(\GG_{1}^{(*)}/\SO(q,\RB))$ coincides with that of the characteristic map $\psi_{\alpha}$ of Theorem \ref{classchar}.  More generally the image of $\psi^{\GG}_{\alpha}$ in $H^{*}_{dR}(\GG_{1}/\SO(q,\RB))$ consists of the Pontryagin classes of the groupoid $\GG_{1}/\SO(q,\RB)$, accessed in the same manner as in \cite{chgpd}.  In order to construct \emph{secondary} characteristic classes for the groupoid $\GG_{1}/\SO(q,\RB)$, we must prove an analogue of Bott's vanishing theorem - that is, we must prove that the Pontryagin classes of $\GG_{1}/\SO(q,\RB)$ vanish in total degree greater than $2q$.  To this end, we present the following generalisation of Bott's vanishing theorem, which is the non-\'{e}tale analogue of \cite[Theorem 2 (iv)]{crainic1}.  Regard elements of the subalgebra $I^{*}_{q}(\RB):=\RB[c_{1},\dots,c_{q}]\subset WO_{q}$ as invariant polynomials as in Equation \eqref{ci}.

\begin{thm}[Bott's vanishing theorem for $\GG_{1}$]\label{bottvan2}
	Let $\alpha^{\flat}\in\Omega^{1}(\Fr^{+}(N);\mathfrak{gl}(q,\RB))$ be a Bott connection form.  If $P\in I^{*}_{q}(\RB)$ is an invariant polynomial of degree $\deg(P)>q$ (so that its degree in $I^{*}_{q}(\RB)$ is greater than $2q$), then $\psi_{\alpha^{\flat}}(P) = 0\in\Omega^{*}(\GG^{(*)}_{1}/\SO(q,\RB))$.
\end{thm}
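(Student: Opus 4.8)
The plan is to show that $\varPsi^{\GG}_{\alpha^{\flat}}(P)$ vanishes identically in $\Omega^{*}_{\Delta}(\GG_{1}/\SO(q,\RB))$, so that a fortiori $\psi^{\GG}_{\alpha^{\flat}}(P) = I(\varPsi^{\GG}_{\alpha^{\flat}}(P)) = 0$. By Lemma \ref{con1} and Theorem \ref{charmap}, $\varPsi^{\GG}_{\alpha^{\flat}}$ is the homomorphism of differential graded algebras determined by the simplicial connection $\tilde{\alpha}^{\flat} = \{(\alpha^{\flat})^{(k)}\}_{k\in\NB}$, hence it sends the universal curvature to the simplicial curvature; at simplicial level $k$ it sends the invariant polynomial $P$ to the form $P(R^{(k)})$, where $R^{(k)}\in\Omega^{2}(\Delta^{k}\times\GG_{1}^{(k)})\otimes\mathfrak{gl}(q,\RB)$ is the curvature of the connection $(\alpha^{\flat})^{(k)} = \sum_{i=0}^{k}t_{i}(p^{k}_{i})^{*}\alpha^{\flat}$. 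It therefore suffices to prove $P(R^{(k)}) = 0$ on $\Delta^{k}\times\GG_{1}^{(k)}$ for every $k\in\NB$; for $k=0$ this is exactly the classical Bott vanishing theorem applied to $R^{\flat}$, and the argument below will treat all $k$ uniformly.

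First I would fix, on each $\Delta^{k}\times\GG_{1}^{(k)}$, the ideal $\mathcal{J}^{(k)}\subseteq\Omega^{*}(\Delta^{k}\times\GG_{1}^{(k)})$ generated by the pullbacks along $\rho^{k}:=\pi_{\Fr^{+}(N)}\circ p^{k}_{0}:\GG_{1}^{(k)}\to M$ of the transverse coordinate $1$-forms of foliated charts of $(M,\FF)$ -- equivalently, $\mathcal{J}^{(k)}$ is the annihilator ideal of the corank-$q$ distribution $\mathcal{D}^{(k)}:=(d\rho^{k})^{-1}(T\FF)\subseteq T\GG_{1}^{(k)}$. Since $\mathcal{J}^{(k)}$ is locally generated by $q$ one-forms of degree one, $(\mathcal{J}^{(k)})^{q+1} = 0$. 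The crux of the proof is the claim that $R^{(k)}\in\mathcal{J}^{(k)}\otimes\mathfrak{gl}(q,\RB)$, i.e.\ every matrix entry of $R^{(k)}$ lies in $\mathcal{J}^{(k)}$. Granting this, $P(R^{(k)})$ -- being a sum of wedge products of $d:=\deg(P)$ entries of $R^{(k)}$ -- lies in $(\mathcal{J}^{(k)})^{d}$, which vanishes because $d>q$; this is precisely the mechanism of Bott vanishing transplanted to the nerve of $\GG_{1}$.

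To establish $R^{(k)}\in\mathcal{J}^{(k)}\otimes\mathfrak{gl}(q,\RB)$ I would use the standard formula for the curvature of a convex combination of connections. Writing $\alpha_{i}:=(p^{k}_{i})^{*}\alpha^{\flat}$ and $R^{\flat}_{i}:=(p^{k}_{i})^{*}R^{\flat}$, and using $\sum_{i}dt_{i}=0$ on $\Delta^{k}$ to rewrite $\sum_{i}dt_{i}\wedge\alpha_{i}=\sum_{i}dt_{i}\wedge(\alpha_{i}-\alpha_{0})$, one gets
\[
R^{(k)} \;=\; \sum_{i=0}^{k}dt_{i}\wedge(\alpha_{i}-\alpha_{0}) \;+\; \sum_{i=0}^{k}t_{i}R^{\flat}_{i} \;-\; \frac{1}{2}\sum_{i,j=0}^{k}t_{i}t_{j}\,(\alpha_{i}-\alpha_{j})\wedge(\alpha_{i}-\alpha_{j}).
\]
Since $\mathcal{J}^{(k)}$ is an ideal, it is then enough to check that each $R^{\flat}_{i}$ and each $\alpha_{i}-\alpha_{j}$ already lies in $\mathcal{J}^{(k)}\otimes\mathfrak{gl}(q,\RB)$. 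For $R^{\flat}_{i}$ this follows from the local content of classical Bott vanishing -- in a trivialisation of $\Fr^{+}(N)$ adapted to $\alpha^{\flat}$ one has $R^{\flat}=\Ad_{g^{-1}}(\pi_{\Fr^{+}(N)}^{*}R_{U})$ with $R_{U}$ lying in the transverse ideal of the foliated chart $U\subseteq M$ -- together with the fact that each $p^{k}_{i}$ is a morphism of foliated manifolds, so pulls transverse forms to transverse forms. For $\alpha_{i}-\alpha_{j}$ one observes that it is a difference of connection forms on the principal $\GL^{+}(q,\RB)$-bundle $\Delta^{k}\times\GG_{1}^{(k)}\to\Delta^{k}\times\GG^{(k)}$, hence tensorial and in particular annihilating the vertical bundle, while each $\alpha_{\ell}$ annihilates the leafwise directions of $\GG_{1}^{(k)}$ because $\alpha^{\flat}$ is a Bott connection form and $p^{k}_{\ell}$ is foliated; together these force $\alpha_{i}-\alpha_{j}$ to annihilate $\mathcal{D}^{(k)}$.

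The step I expect to be the main obstacle is the geometric input underpinning the previous paragraph: one must know that $\GG_{1}^{(k)}$ carries a \emph{single} corank-$q$ distribution $\mathcal{D}^{(k)}$, independent of which vertex map $p^{k}_{i}$ is used to define it -- so that $(d(\pi_{\Fr^{+}(N)}\circ p^{k}_{i}))^{-1}(T\FF)=\mathcal{D}^{(k)}$ for every $i$ -- and that every $p^{k}_{i}$ is foliated for it. This rests on the defining property of the holonomy groupoid: the $\GG$-action on $N$, and hence on $\Fr^{+}(N)$, is leafwise parallel transport of a Bott connection, which transversally is an action by local diffeomorphisms. Concretely, in trivialisations of $\Fr^{+}(N)$ whose frames are $\nabla^{\flat}$-parallel along leaves the holonomy transition matrices depend only on the transverse coordinate, so that $\pi_{\Fr^{+}(N)}\circ p^{k}_{i}$ and $\pi_{\Fr^{+}(N)}\circ p^{k}_{j}$ have the same conormal at every point; equivalently, since $p^{k}_{i}$ and $p^{k}_{j}$ differ by the $\GG$-action, one invokes the standard fact that $(ds)^{-1}(T\FF)=(dr)^{-1}(T\FF)$ on a foliation groupoid. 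Packaging this cleanly -- ideally by exhibiting the nerve of $\GG_{1}$ as a simplicial foliated manifold on which $\alpha^{\flat}$ induces a simplicial Bott connection -- is where the real work lies; once it is in place, the curvature identity above and the nilpotency $(\mathcal{J}^{(k)})^{q+1}=0$ finish the proof routinely.
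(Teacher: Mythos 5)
Your proposal is correct, and the underlying geometric input is the same as in the paper's proof: in foliated coordinates the Bott condition forces the local connection forms into the ideal generated by the transverse differentials $dz^{1},\dots,dz^{q}$, the Maurer--Cartan contributions cancel out of the curvature, and all the vertex maps $p^{k}_{i}$ pull the transverse ideal back to one and the same corank-$q$ conormal ideal on $\GG^{(k)}_{1}$ because holonomy acts transversally by local diffeomorphisms (the point you rightly single out as the crux; it is exactly what the paper's explicit coordinate formula for $p^{k}_{i}$ in terms of holonomy transformations encodes). Where you differ is in the organisation, and your route is cleaner and proves slightly more. The paper integrates over $\Delta^{k}$ first, disposes of $k>\deg(P)$ by a degree count in the simplex variables, and for $k\leq\deg(P)$ counts transverse differentials in each surviving summand ($k$ of them forced by the $dt_{i}$-factors that survive the fibre integration, plus at least $\deg(P)-k$ from the remaining curvature factors) to reach a total of $\deg(P)>q$. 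You instead show that every matrix entry of $R^{(k)}$ already lies in the annihilator ideal $\mathcal{J}^{(k)}$ -- via the affine-combination curvature identity, with $(p^{k}_{i})^{*}R^{\flat}$ handled by the local content of classical Bott vanishing and $\alpha_{i}-\alpha_{j}$ by tensoriality plus the Bott condition -- so that $P(R^{(k)})\in(\mathcal{J}^{(k)})^{\deg(P)}=0$ pointwise on $\Delta^{k}\times\GG^{(k)}_{1}$. This yields the stronger statement $\varPsi^{\GG}_{\alpha^{\flat}}(P)=0$ in $\Omega^{*}_{\Delta}(\GG_{1}/\SO(q,\RB))$, not merely the vanishing of its fibre integral $\psi^{\GG}_{\alpha^{\flat}}(P)=I(\varPsi^{\GG}_{\alpha^{\flat}}(P))$, and it eliminates both the case split on $k$ versus $\deg(P)$ and the bookkeeping over which factors contribute $dt_{i}$'s. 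Both the curvature identity and the nilpotency $(\mathcal{J}^{(k)})^{q+1}=0$ are sound, so I see no gap.
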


\begin{proof}
	For each $k\in\NB$, let $(R^{\flat})^{(k)} = d(\alpha^{\flat})^{(k)}+(\alpha^{\flat})^{(k)}\wedge(\alpha^{\flat})^{(k)}$ denote the curvature of the connection form $(\alpha^{\flat})^{(k)}$ on $\Delta^{k}\times\GG_{1}^{(k)}$ obtained as in Lemma \ref{con1}.  Let $P\in I^{*}_{q}(\RB)$.  The cochain $\psi_{\alpha^{\flat}}(P)$ in $\Omega^{*}(\GG_{1}^{(*)}/\SO(q,\RB))$ identifies in the same manner as in Example \ref{princ1} with the $\SO(q,\RB)$-basic cochain
	\begin{equation}\label{sumk}
	\sum_{k}\int_{\Delta^{k}}P((R^{\flat})^{(k)}),
	\end{equation}
	in $\Omega^{*}(\GG_{1}^{(*)})$.  Thus it suffices to show that the cochain in Equation \eqref{sumk} is zero.
	
	The form $(R^{\flat})^{(k)}$ is by construction of degree at most 1 in the $\Delta^{k}$ variables due to the $d(\alpha^{\flat})^{(k)}$, and therefore $P((R^{\flat})^{(k)})$ is of degree at most $\deg(P)$ in the $\Delta^{k}$ variables.  Thus $\int_{\Delta^{k}}P((R^{\flat})^{(k)})$ vanishes when $\deg(P)<k$, implying that $\psi_{\alpha}(P)$ vanishes in $\Omega^{*}(\GG^{(k)}_{1})$ for $k>\deg(P)$.
	
	Let us assume therefore that $k\leq \deg(P)$.  We will show that $\int_{\Delta^{k}}P((R^{\flat})^{(k)}) =0$ as a differential form on $\Omega^{2\deg(P)-k}(\GG^{(k)}_{1})$.  On $\Delta^{k}\times\GG^{(k)}_{1}$, using Equation \eqref{bob}, we compute
	\begin{equation}\label{curvk}
	(R^{\flat})^{(k)} = \sum_{i=0}^{k}dt_{i}\wedge(p^{k}_{i})^{*}\alpha^{\flat}+\sum_{i=0}^{k}t_{i}(p^{k}_{i})^{*}d\alpha^{\flat}+\bigg(\sum_{i=0}^{k}t_{i}(p^{k}_{i})^{*}\alpha^{\flat}\bigg)\wedge\bigg(\sum_{i=0}^{k}t_{i}(p^{k}_{i})^{*}\alpha^{\flat}\bigg),
	\end{equation}
	with the $p^{k}_{i}:\GG^{(k)}_{1}\rightarrow\Fr^{+}(N)$ defined as in Construction \ref{pki}.  To proceed further, we must consider a local coordinate picture.
	
	About a point $(u_{1},\dots,u_{k})\cdot \phi\in\GG^{(k)}_{1}$, consider a local coordinate chart for $\GG^{(k)}_{1}$ of the form
	\[
	\bigg((x^{j}_{1})_{j=1}^{\dim(\FF)};\dots;(x^{j}_{k})_{j=1}^{\dim(\FF)};(z^{j})_{j=1}^{q};g\bigg)\in B_{1}\times\cdots\times B_{k}\times V\times\GL^{+}(q,\RB),
	\]
	where the $B_{1},\dots,B_{k}$ are open balls in $\RB^{\dim(\FF)}$ corresponding to plaques in foliated charts $U_{1},\dots, U_{k}$ in $(M,\FF)$, and where $V$ is an open ball in $\RB^{q}$ such that $B_{k}\times V\cong U_{k}$.  For $\tu_{j+1} = u_{j+1}\cdots u_{k}\in\GG$ we let $(h_{\tu_{j+1}})^{l}:V\rightarrow\RB$ denote the $l^{th}$ component function of some holonomy transformation $h_{\tu_{j+1}}$ representing $\tu_{j+1}$.  Then in these coordinates the maps $p^{k}_{i}:\GG^{(k)}_{1}\rightarrow \Fr^{+}(N)$ take the form
	\[
	p^{k}_{i}\bigg((x^{j}_{1})_{j=1}^{\dim(\FF)};\dots;(x^{j}_{k})_{j=1}^{\dim(\FF)};(z^{j})_{j=1}^{q};g\bigg):= \bigg((x^{j}_{i+1})_{j=1}^{\dim(\FF)};\big((h_{\tu_{i+1}})^{j}(z^{1},\dots,z^{q})\big)_{j=1}^{q};g\bigg).
	\]
	To write $(R^{\flat})^{(k)}$ in these local coordinates, consider the chart $U_{i}\times\GL^{+}(q,\RB)$ of $\Fr^{+}(N)$.  In the foliated chart $U_{i}$ we have the local connection form $\alpha_{i}\in\Omega^{1}(U_{i};\mathfrak{gl}(q,\RB))$ corresponding to $\nabla^{\flat}$, which by Proposition \ref{bottconnections} vanishes on plaquewise\footnote{i.e. locally leafwise} tangent vectors.  Letting $\pi_{1}:U_{i}\times\GL^{+}(q,\RB)\rightarrow U_{i}$ and $\pi_{2}:U_{i}\times\GL^{+}(q,\RB)\rightarrow\GL^{+}(q,\RB)$ denote the projections, over $U_{i}\times\GL^{+}(q,\RB)$ the form $\alpha^{\flat}$ can be written
	\[
	\alpha^{\flat}_{(x,g)} = \Ad_{g^{-1}}\big(\pi_{1}^{*}\alpha_{i}\big)_{(x,g)}+\big(\pi_{2}^{*}\omega^{MC}\big)_{(x,g)},\hspace{7mm}(x,g)\in U_{i}\times\GL^{+}(q,\RB)
	\]
	where $\omega^{MC}$ is the Maurer-Cartan form on $\GL^{+}(q,\RB)$ \cite[Section 2.4 (b)]{gdf}.  For simplicity, let us abuse notation in letting $\alpha_{i}$ denote the form $\Ad^{-1}\big(\pi_{1}^{*}\alpha_{i}\big)$ on $U_{i}\times\GL^{+}(q,\RB)$.  Then by Proposition \ref{bottconnections} the matrix components of $\alpha_{i}$ can all be written in terms of the differentials of the transverse coordinates $z^{j}$ in $U_{i}$.  Consequently, in coordinates we can write
	\[
	(p^{k}_{i})^{*}\alpha^{\flat} = (p^{k}_{i})^{*}\alpha_{i}+\pi_{k}^{*}\omega^{MC},
	\]
	where $\pi_{k}:B_{1}\times\cdots\times B_{k}\times V\times\GL^{+}(q,\RB)\rightarrow\GL^{+}(q,\RB)$ is the projection and where $(p^{k}_{i})^{*}\alpha_{i}$ is a $\mathfrak{gl}(q,\RB)$-valued 1-form in the coordinate differentials $(dz^{j})_{j=1}^{q}$.
	
	Let us now rewrite the expression \eqref{curvk} for $(R^{\flat})^{(k)}$ in coordinates.  The first term on the right hand side can be written
	\begin{equation}\label{p1}
	\sum_{i=0}^{k}dt_{i}\wedge(p^{k}_{i})^{*}\alpha^{\flat} = \sum_{i=0}^{k}dt_{i}\wedge(p^{k}_{i})^{*}\alpha_{i}+\bigg(\sum_{i=0}^{k}dt_{i}\bigg)\wedge\pi_{k}^{*}\omega^{MC} = \sum_{i=0}^{k}dt_{i}\wedge(p^{k}_{i})^{*}\alpha_{i}
	\end{equation}
	since $\sum_{i=0}^{k}t_{i} = 1$.  The middle term on the right hand side of \eqref{curvk} can be written
	\begin{equation}\label{p2}
	\sum_{i=0}^{k}t_{i}(p^{k}_{i})^{*}d\alpha^{\flat} = \sum_{i=0}^{k}t_{i}(p^{k}_{i})\alpha_{i}+\bigg(\sum_{i=0}^{k}t_{i}\bigg)\pi_{k}^{*}d\omega^{MC} = \sum_{i=0}^{k}t_{i}(p^{k}_{i})\alpha_{i}+\pi_{k}^{*}d\omega^{MC},
	\end{equation}
	while the last term on the right hand side of \eqref{curvk} can be written
	\begin{align}
		\bigg(\sum_{i=0}^{k}t_{i}(p^{k}_{i})^{*}\alpha^{\flat}\bigg)\wedge\bigg(\sum_{i=0}^{k}t_{i}(p^{k}_{i})^{*}\alpha^{\flat}\bigg) =& \bigg(\sum_{i=0}^{k}t_{i}(p^{k}_{i})^{*}\alpha_{i}\bigg)\wedge\bigg(\sum_{i=0}^{k}t_{i}(p^{k}_{i})^{*}\alpha_{i}\bigg)\nonumber\\ &+ \bigg(\sum_{i=0}^{k}t_{i}(p^{k}_{i})^{*}\alpha_{i}\bigg)\wedge\pi_{k}^{*}\omega^{MC}\nonumber\\&+\pi_{k}^{*}\omega^{MC}\wedge\bigg(\sum_{i=0}^{k}t_{i}(p^{k}_{i})^{*}\alpha_{i}\bigg) + \pi_{k}^{*}(\omega^{MC}\wedge\omega^{MC})\label{p3}.
	\end{align}
	Adding the expressions \eqref{p1}, \eqref{p2} and \eqref{p3} and using the fact that the Maurer-Cartan form satisfies $d\omega^{MC}+\omega^{MC}\wedge\omega^{MC} = 0$ \cite[Equation 2.46]{gdf}, we find that
	\begin{align}
		(R^{\flat})^{(k)} =& \sum_{i=0}^{k}dt_{i}\wedge(p^{k}_{i})^{*}\alpha_{i}+\sum_{i=0}^{k}t_{i}(p^{k}_{i})^{*}d\alpha_{i}+\bigg(\sum_{i=0}^{k}t_{i}(p^{k}_{i})^{*}\alpha_{i}\bigg)\wedge\bigg(\sum_{i=0}^{k}t_{i}(p^{k}_{i})^{*}\alpha_{i}\bigg)\nonumber\\ &+\bigg(\sum_{i=0}^{k}t_{i}(p^{k}_{i})^{*}\alpha_{i}\bigg)\wedge\pi_{k}^{*}\omega^{MC}+\pi_{k}^{*}\omega^{MC}\wedge\bigg(\sum_{i=0}^{k}t_{i}(p^{k}_{i})^{*}\alpha_{i}\bigg)\label{p4}.
	\end{align}
	
	For a summand of $\int_{\Delta^{k}}P((R^{\flat})^{(k)})$ to be nonzero, it must contain precisely $k$ factors of the first term appearing in \eqref{p4}.  Thus, in our coordinates, due to the $(p^{k}_{i})^{*}\alpha_{i}$ appearing in this first term of \eqref{p4} each summand of $\int_{\Delta^{k}}P((R^{\flat})^{(k)})$ contains a string of wedge products of at least $k$ of the $dz^{i}$.  This consideration accounts for $2k$ of the coordinate differentials that appear in each summand of $\int_{\Delta^{k}}P((R^{\flat})^{(k)})$, and we must concern ourselves now with the $2\deg(P)-2k$ coordinate differentials that remain.
	
	Now each of the final four terms in \eqref{p4} is a matrix of 2-forms, and contains either an $\alpha_{i}$ or a $d\alpha^{i}$ as a factor.  Consequently, all the components of each such matrix must contain at least one $dz^{i}$ as a factor.  Therefore, of the remaining $2\deg(P)-2k$ coordinate differentials in each summand of $\int_{\Delta^{k}}P((R^{\flat})^{(k)})$, at least $\deg(P)-k$ more must be $dz^{i}$'s.  Thus in our local coordinate system for $\GG^{(k)}_{1}$, each summand in $\int_{\Delta^{k}}P((R^{\flat})^{(k)})$ contains a string of wedge products of at least $k+(\deg(P)-k) = \deg(P)>q$ of the $dz^{i}$, and must therefore be zero by dimension count.
\end{proof}

Bott's vanishing theorem at the level of the holonomy groupoid enables us to refine our characteristic map in a way entirely analogous to the classical case.

\begin{thm}\label{characteristicmap}
	If $\alpha^{\flat}\in\Omega^{1}(\Fr^{+}(N);\mathfrak{gl}(q,\RB))$ is a Bott connection on $N$, the cochain map $\psi^{\GG}_{\alpha^{\flat}}:WO_{q}\rightarrow\Omega^{*}(\GG_{1}^{(*)}/\SO(q,\RB))$ descends to a cochain map
	\[
	\phi^{\GG}_{\alpha^{\flat}}:\underline{WO}_{q}\rightarrow \Omega^{*}(\GG_{1}^{(*)}/\SO(q,\RB))
	\]
	whose induced map on cohomology is independent of the Bott connection chosen.\qed
\end{thm}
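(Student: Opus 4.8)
The plan is to read off the theorem from Bott's vanishing Theorem \ref{bottvan2}, upgraded to the level of simplicial forms, together with the multiplicativity of $\varPsi^{\GG}_{\alpha^{\flat}}$ and the homotopy invariance built into Theorem \ref{weiluni}. Recall that $\psi^{\GG}_{\alpha^{\flat}} = I\circ\varPsi^{\GG}_{\alpha^{\flat}}$, where $\varPsi^{\GG}_{\alpha^{\flat}}\colon WO_{q}\to\Omega^{*}_{\Delta}(\GG_{1}/\SO(q,\RB))$ is the homomorphism of differential graded algebras of Theorem \ref{charmap} (precomposed with the inclusion $WO_{q}\hookrightarrow W(\mathfrak{gl}(q,\RB),\SO(q,\RB))$), and $I$ is the fibre-integration cochain map of Proposition \ref{intoverfibre}. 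Write $\underline{WO}_{q} = WO_{q}/\mathcal{J}$, where $\mathcal{J}$ is the differential ideal generated by the invariant polynomials $P\in I^{*}_{q}(\RB)$ with $\deg(P)>q$. Since $\mathcal{J}$ is a sub-cochain-complex of $WO_{q}$, it suffices to show that $\psi^{\GG}_{\alpha^{\flat}}$ annihilates $\mathcal{J}$; it then descends to a cochain map $\phi^{\GG}_{\alpha^{\flat}}$ on the quotient complex $\underline{WO}_{q}$.

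Because $\varPsi^{\GG}_{\alpha^{\flat}}$ is multiplicative and $\mathcal{J}$ is generated as an ideal by the invariant polynomials of degree $>q$, it is enough to check that $\varPsi^{\GG}_{\alpha^{\flat}}(P) = 0$ in $\Omega^{*}_{\Delta}(\GG_{1}/\SO(q,\RB))$ for each such $P$; applying $I$ then yields $\psi^{\GG}_{\alpha^{\flat}}(\mathcal{J}) = 0$. But the level-$k$ component of $\varPsi^{\GG}_{\alpha^{\flat}}(P)$ is exactly $P((R^{\flat})^{(k)})$, and this is what the local coordinate analysis in the proof of Theorem \ref{bottvan2} makes vanish: in foliated coordinates each of the five terms of the curvature expression \eqref{p4} carries at least one transverse differential $dz^{i}$, so every summand of the degree-$\deg(P)$ form $P((R^{\flat})^{(k)})$ involves a wedge of at least $\deg(P)>q$ of the $q$ differentials $dz^{1},\dots,dz^{q}$ and hence is zero. (This is formally stronger than the conclusion $I(\varPsi^{\GG}_{\alpha^{\flat}}(P))=0$ recorded in Theorem \ref{bottvan2}, but it is precisely what that argument produces.) This furnishes the factorisation $\phi^{\GG}_{\alpha^{\flat}}\colon\underline{WO}_{q}\to\Omega^{*}(\GG_{1}^{(*)}/\SO(q,\RB))$.

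For independence of the Bott connection I would refine the homotopy invariance already available one level up. Given two Bott connection forms $\alpha^{\flat}_{0},\alpha^{\flat}_{1}$, Theorem \ref{weiluni} (in the guise used for Corollary \ref{glweiluni}) supplies a $G$-cochain homotopy $C$ between $\varPsi^{\GG}_{\alpha^{\flat}_{0}}$ and $\varPsi^{\GG}_{\alpha^{\flat}_{1}}$ as maps $WO_{q}\to\Omega^{*-1}_{\Delta}(\GG_{1}/\SO(q,\RB))$, and this $C$ is built from the affine path of connections $\alpha^{\flat}_{t}:=(1-t)\alpha^{\flat}_{0}+t\alpha^{\flat}_{1}$, $t\in[0,1]$. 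The point is that each $\alpha^{\flat}_{t}$ is again a Bott connection form, since $T\FF_{\Fr^{+}(N)}\subset\ker(\alpha^{\flat}_{0})\cap\ker(\alpha^{\flat}_{1})\subset\ker(\alpha^{\flat}_{t})$, and $\alpha^{\flat}_{1}-\alpha^{\flat}_{0}$ likewise vanishes on $T\FF_{\Fr^{+}(N)}$; running the coordinate estimate of Theorem \ref{bottvan2} along the path (where the curvature of the path connection is, in foliated coordinates, again a sum of terms each carrying a $dz^{i}$) shows that $C$ too annihilates $\mathcal{J}$. Hence $C$ descends to a cochain homotopy $\underline{C}\colon\underline{WO}_{q}\to\Omega^{*-1}_{\Delta}(\GG_{1}/\SO(q,\RB))$ between the descended maps, and composing with the cochain map $I$ (which therefore carries cochain homotopies to cochain homotopies) gives a cochain homotopy between $\phi^{\GG}_{\alpha^{\flat}_{0}}$ and $\phi^{\GG}_{\alpha^{\flat}_{1}}$, whence equality of the induced maps on $H^{*}_{dR}(\GG_{1}/\SO(q,\RB))$.

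The remaining bookkeeping — that $C$ and $\underline{C}$ do land in the $\SO(q,\RB)$-basic subcomplex and that passage to the quotient preserves the cochain-homotopy identity — is routine from the equivariance axioms for $G$-cochain homotopies, exactly as in the classical situation underlying Theorem \ref{classchar}. I expect the only genuinely delicate point to be the verification that the universal homotopy $C$ of Theorem \ref{weiluni} kills $\mathcal{J}$: this is where one actually uses that convex combinations of Bott connection forms are Bott connection forms, so that the vanishing phenomenon of Theorem \ref{bottvan2} persists throughout the homotopy and not merely at its endpoints.
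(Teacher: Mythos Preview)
Your proposal is correct and follows the line the paper intends: the paper simply places a \qed\ after the statement, treating it as an immediate consequence of Theorem \ref{bottvan2} (Bott vanishing for $\GG_{1}$) together with Theorem \ref{charmap}, ``in a way entirely analogous to the classical case'' of Theorem \ref{classchar}. You have unpacked precisely the content of that \qed: the observation that the local computation in the proof of Theorem \ref{bottvan2} actually gives $P((R^{\flat})^{(k)})=0$ at the simplicial level (since every term of \eqref{p4} already carries a $dz^{i}$), which via multiplicativity of $\varPsi^{\GG}_{\alpha^{\flat}}$ kills the whole ideal $\mathcal{J}$ rather than just the polynomial generators; and the convexity argument showing that the standard $G$-cochain homotopy between two Bott connections again annihilates $\mathcal{J}$ and hence descends. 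Both points are exactly the details the paper leaves implicit.
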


Recall now the characteristic map $\phi_{\alpha^{\flat}}:\underline{WO}_{q}\rightarrow \Omega^{*}(\Fr^{+}(N)/\SO(q,\RB))$ of Theorem \ref{classchar}, and for any $b\in\Omega^{*}(\GG^{(*)}_{1}/\SO(q,\RB))$ let $b_{0}$ denote its component in $\Omega^{*}(\Fr^{+}(N)/\SO(q,\RB))$.  Then by construction we have
\[
\big(\phi^{\GG}_{\alpha^{\flat}}(a)\big)_{0} = \phi_{\alpha^{\flat}}(a),\hspace{7mm}\text{for all $a\in\underline{WO}_{q}$}.
\]
Thus $\phi_{\alpha^{\flat}}$ should be thought of as encoding the ``static" transverse geometric information that can be accessed via classical Chern-Weil theory, while the ``larger" characteristic map $\phi^{\GG}_{\alpha^{\flat}}$ encodes both the static \emph{and dynamic} information pertaining to the relationship of the groupoid action with transverse geometry. As discussed in Remark \ref{pldn}, one can pull back the static information encoded by $\phi_{\alpha^{\flat}}$ to $\Omega^{*}(M)$ through the choice of a Euclidean structure for $N$, so one might hope that it is also possible to pull back all the dynamical information encoded by $\phi^{\GG}_{\alpha^{\flat}}$ to the double complex $\Omega^{*}(\GG^{(*)})$ in the same way.

Indeed it is claimed, with some vagueness, by Crainic and Moerdijk in \cite[Section 3.4]{crainic1} (who work with an \'{e}talified, \v{C}ech version of the double complex $\Omega^{*}(\GG^{(*)})$) that the contractibility of the fibres of $\Fr^{+}(N)/\SO(q,\RB)$ allows one to pull all of $\phi^{\GG}_{\alpha^{\flat}}(\underline{WO}_{q})$ down to $\Omega^{*}(\GG^{(*)})$ ``as in" the static case.  While it is unclear exactly what Crainic and Moerdijk mean by this, let us point out here that one is prevented from na\"{i}vely extending the cochain map $\sigma^{*}:\Omega^{*}(\Fr^{+}(N)/\SO(q,\RB))\rightarrow\Omega^{*}(M)$ to a cochain map $\Omega^{*}(\GG_{1}^{(*)}/\SO(q,\RB))\rightarrow\Omega^{*}(\GG^{(*)})$ precisely by the lack of invariance of the Euclidean structure on $N$ defining $\sigma$ under the action of $\GG$.  More precisely, we have the following proposition.

\begin{prop}
	Define $\sigma^{(k)}:\GG^{(k)}\rightarrow\GG^{(k)}_{1}/\SO(q,\RB)$ by the formula
	\[
	\sigma^{(k)}(u_{1},\dots,u_{k}):=(u_{1},\dots,u_{k})\cdot\sigma(s(u_{k})),\hspace{7mm}(u_{1},\dots,u_{k})\in\GG^{(k)}.
	\]
	Pulling back by the $\sigma^{(k)}$ defines a cochain map $\Omega^{*}(\GG_{1}/\SO(q,\RB))\rightarrow\Omega^{*}(\GG^{(*)})$ if and only if $\sigma$ is invariant under the action of $\GG$.
\end{prop}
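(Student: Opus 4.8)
The plan is to reduce the statement to a question about the simplicial face maps and then read off the answer in the lowest bidegree. First I would note that the family $\{(\sigma^{(k)})^{\ast}\}$ preserves the bidegree of the double complex, so it is a morphism of total complexes exactly when it commutes with the vertical differential $d$ and with the horizontal differential $\partial$ (the sign $(-1)^{m}$ in $\delta$ depending only on the simplicial degree $m$, which is preserved). Commutation with $d$ is automatic, since pullback of forms along any smooth map of (locally Hausdorff) manifolds commutes with the exterior derivative. As $\partial = \sum_{i=0}^{k}(-1)^{i}(\epsilon^{k}_{i})^{\ast}$, commutation with $\partial$ will follow once each $\sigma^{(k)}$ intertwines the simplicial face maps, i.e. $\epsilon^{k}_{i}\circ\sigma^{(k)} = \sigma^{(k-1)}\circ\epsilon^{k}_{i}$ for all $0\leq i\leq k$. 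So for $(\Leftarrow)$ I would verify these intertwining identities under the hypothesis that $u\cdot\sigma(s(u)) = \sigma(r(u))$ for all $u\in\GG$ (which is what $\GG$-invariance of $\sigma$ means), and for $(\Rightarrow)$ I would extract this invariance from $\partial$-commutation applied to a single test form.

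For $(\Leftarrow)$, I would unwind $\sigma^{(k)}(u_{1},\dots,u_{k}) = (u_{1},\dots,u_{k})\cdot\sigma(s(u_{k}))$ against the explicit formulas for the $\epsilon^{k}_{i}$ on $\GG_{1}^{(k)}/\SO(q,\RB)$ and on $\GG^{(k)}$, in cases. For $i=0$ and for $1\leq i\leq k-1$ the last entry $u_{k}$, hence $s(u_{k})$, hence the frame $\sigma(s(u_{k}))$, is untouched on both sides of the claimed identity, which then holds with no appeal to invariance. The case $i=k$ is the only one that uses the hypothesis: there $\epsilon^{k}_{k}\sigma^{(k)}(u_{1},\dots,u_{k}) = (u_{1},\dots,u_{k-1})\cdot\big(u_{k}\cdot\sigma(s(u_{k}))\big)$, and invariance rewrites $u_{k}\cdot\sigma(s(u_{k})) = \sigma(r(u_{k})) = \sigma(s(u_{k-1}))$, turning the right side into $(u_{1},\dots,u_{k-1})\cdot\sigma(s(u_{k-1})) = \sigma^{(k-1)}\epsilon^{k}_{k}(u_{1},\dots,u_{k})$. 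Hence every face map is intertwined, each $(\sigma^{(k)})^{\ast}$ commutes with $\partial$, and, combined with the automatic commutation with $d$, the family $\{(\sigma^{(k)})^{\ast}\}$ is a cochain map.

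For $(\Rightarrow)$, I would assume $\{(\sigma^{(k)})^{\ast}\}$ is a cochain map and test it on an arbitrary $f\in\Omega^{0}(\GG_{1}^{(0)}/\SO(q,\RB)) = C^{\infty}(\Fr^{+}(N)/\SO(q,\RB))$, which sits in bidegree $(0,0)$, so that $\delta f = df+\partial f$. Since pullback commutes with $d$, comparing the bidegree $(0,1)$ components of $(\sigma^{(\ast)})^{\ast}\delta f = \delta(\sigma^{\ast}f)$ yields $(\sigma^{(1)})^{\ast}(\partial f) = \partial(\sigma^{\ast}f)$ in $\Omega^{0}(\GG^{(1)}) = C^{\infty}(\GG)$. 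Writing out $\partial$ with the face maps $\epsilon^{1}_{0}=s$ and $\epsilon^{1}_{1}=r$, and using that $s(\sigma^{(1)}(u)) = \sigma(s(u))$ regardless of any invariance, the source-type contributions on the two sides cancel identically, leaving $f\big(u\cdot\sigma(s(u))\big) = f\big(\sigma(r(u))\big)$ for all $u\in\GG$; that is, $f\circ\tau = f\circ(\sigma\circ r)$ where $\tau\colon\GG\to\Fr^{+}(N)/\SO(q,\RB)$, $\tau(u):=u\cdot\sigma(s(u))$, is the range of $\sigma^{(1)}(u)$ in the action groupoid. As this holds for every $f$ and smooth functions separate points of the (Hausdorff) manifold $\Fr^{+}(N)/\SO(q,\RB)$, I conclude $\tau = \sigma\circ r$, i.e. $u\cdot\sigma(s(u)) = \sigma(r(u))$ for all $u$, which is exactly $\GG$-invariance of $\sigma$.

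The one point requiring genuine care is this last step of the $(\Rightarrow)$ direction: commutation with $\partial$ is commutation with an \emph{alternating sum} of pullbacks, so a priori the individual face-map identities could fail while their signed contributions cancel. Testing on a function in bidegree $(0,0)$ is what sidesteps this, since there the relation collapses to a single nontrivial face map ($\epsilon^{1}_{1}=r$), the contributions of $\epsilon^{1}_{0}=s$ cancelling for free because the source of $\sigma^{(1)}(u)$ equals $\sigma(s(u))$ whether or not $\sigma$ is invariant; separation of points on the manifold $\Fr^{+}(N)/\SO(q,\RB)$ then converts the resulting identity of pullbacks into the pointwise invariance statement. Everything else --- the explicit form of the face maps $\epsilon^{k}_{i}$ on $\GG_{1}^{(k)}/\SO(q,\RB)$ and the sign bookkeeping in $\delta$ --- is routine.
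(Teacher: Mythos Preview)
Your argument is correct and follows the same route as the paper: both reduce the question to whether the $\sigma^{(k)}$ intertwine the face maps, observe that only $i=k$ is at issue, and read off the invariance condition from that case. If anything you are more careful than the paper on the $(\Rightarrow)$ direction---the paper simply asserts that being a cochain map is equivalent to intertwining all face maps, whereas you justify this by noting that the $i<k$ identities hold unconditionally and then extract invariance by testing on functions and separating points.
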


\begin{proof}
	The $\sigma^{(k)}$ define a cochain map if and only if $\sigma^{(k-1)}\circ\epsilon^{k}_{i} = \epsilon^{k}_{i}\circ\sigma^{(k)}$ for all $i\leq k$.  However, we see that
	\[
	\epsilon^{k}_{k}\big(\sigma^{(k)}(u_{1},\dots,u_{k})\big) = (u_{1},\dots,u_{k})\cdot\big(u_{k}\cdot\sigma(s(u_{k}))\big)
	\]
	while
	\[
	\sigma^{(k-1)}\big(\epsilon^{k}_{k}(u_{1},\dots,u_{k})\big) = (u_{1},\dots,u_{k-1})\cdot\sigma(s(u_{k-1}))
	\]
	for all $(u_{1},\dots,u_{k})\in\GG^{(k)}$. Consequently we have $\epsilon^{k}_{k}\circ\sigma^{(k)} = \sigma^{(k-1)}\circ\epsilon^{k}_{k}$ if and only if $u\cdot\sigma(s(u)) = \sigma(r(u))$ for all $u\in\GG$, which occurs if and only if the Euclidean structure $\sigma$ on $N$ is preserved by the action of $\GG$.
\end{proof}

An invariant section $\sigma:M\rightarrow\Fr^{+}(N)/\SO(q,\RB)$ is the same thing as a $\GG$-invariant Euclidean structure on $N$, which is not always guaranteed to exist.  Moreover in any situation where such an invariant Euclidean structure \emph{does} exist, it induces via its determinant an invariant transverse volume form.  In this case, well-known results \cite[Theorem 2]{hurdkat} state that all generalised Godbillon-Vey classes (that is, those classes determined by cocycles $h_{1}h_{I}c_{J}\in\underline{WO}_{q}$, for multi-indices $I$ and $J$ with $\deg(c_{J}) = 2q$ and with $h_{I} =1$ permitted) vanish in $H^{*}_{dR}(M)$.  In particular, whenever $(M,\FF)$ has \emph{nonvanishing} Godbillon-Vey invariant, in order to probe the algebraic topology of $\GG$ using the characteristic map $\phi^{\GG}_{\alpha^{\flat}}$ we need a more sophisticated method of getting from $\Omega^{*}(\GG^{(*)}_{1}/\SO(q,\RB))$ to $\Omega^{*}(\GG^{(*)})$ which takes into account the lack of invariance of Euclidean structures on $N$ under the action of $\GG$.  Giving such a construction constitutes an interesting research question, which we leave to a future paper.

\section{The codimension 1 Godbillon-Vey cyclic cocycle}\label{sc3}

Connes and Moscovici \cite[Section 4]{diffcyc} use the \'{e}tale picture of a foliation groupoid to obtain an analogue of Theorem \ref{characteristicmap}.  More specifically, they replace $\GG_{1}$ with the groupoid $FX\rtimes\Gamma_{X}$ of germs of local diffeomorphisms of an $q$-manifold $X$, lifted to the frame bundle $FX$ of $X$.  Then they obtain a characteristic map from $H^{*}(\underline{WO}_{q})$ to the cyclic cohomology of the algebra $C_{c}^{\infty}(FX)\rtimes\Gamma_{X}$.  While unfortunately the lack of an easily-defined ``transverse exterior derivative" prevents a complete replication of the Connes-Moscovici construction in the non-\'{e}tale case, we can use Theorem \ref{characteristicmap} to give, in codimension 1, a cyclic cocycle for the Godbillon-Vey invariant on the algebra $C_{c}^{\infty}(\GG_{1};\Omega^{\frac{1}{2}})$ (recall from \cite{folops} that $C_{c}^{\infty}(\GG_{1};\Omega^{\frac{1}{2}})$ is the convolution algebra spanned by leafwise half-densities that are smooth with compact support in some Hausdorff open subset of the locally Hausdorff Lie groupoid $\GG_{1}$).

Let us begin with a preliminary calculation.  Suppose that $(M,\FF)$ is of codimension 1 (in which case $\SO(1,\RB)$ is the trivial group so we need not concern ourselves with basic elements), and let $\alpha\in\Omega^{1}(\Fr^{+}(N))$ correspond to a Bott connection on $N$ (we have dropped the $\flat$ superscript for notational simplicity).  We obtain the corresponding connection forms $\alpha^{(0)} = \alpha$ on $\GG^{(0)}_{1} = \Fr^{+}(N)$ and $\alpha^{(1)}$ on $\Delta^{1}\times\GG^{(1)}_{1}$ defined by
\[
\alpha^{(1)}_{(t;u)}:=t(p^{1}_{0})^{*}\alpha+(1-t)(p^{1}_{1})^{*}\alpha = t\,r^{*}\alpha+(1-t)\,s^{*}\alpha
\]
for $(t;u)\in\Delta^{1}\times\GG^{(1)}_{1}$.  For simplicity let us denote $(p^{1}_{i})^{*}\alpha$ by simply $\alpha_{i}$, $i=0,1$.  Then since $q=1$, the curvature of $\alpha^{(1)}$ is given simply by
\[
R^{(1)}_{(t;u)}:=dt\wedge(\alpha_{0}-\alpha_{1})+td\alpha_{0}+(1-t)d\alpha_{1}.
\]
Now in $\underline{WO}_{1}$ the Godbillon-Vey invariant is given by the cocycle $h_{1}c_{1}$, which is mapped via the $\phi^{\GG}_{\alpha}$ of Theorem \ref{characteristicmap} to the simplicial differential form
\begin{align*}
	\alpha^{(1)}&\wedge R^{(1)} = (t\alpha_{0}+(1-t)\alpha_{1})\wedge(dt\wedge(\alpha_{0}-\alpha_{1})+td\alpha_{0}+(1-t)d\alpha_{1})\\ =& -dt\wedge (t\alpha_{0}+(1-t)\alpha_{1})\wedge(\alpha_{0}-\alpha_{1})+(t\alpha_{0}+(1-t)\alpha_{1})\wedge(td\alpha_{0}+(1-t)d\alpha_{1})
\end{align*}
on $\Delta^{1}\times\GG^{(1)}_{1}$.  Integration over $\Delta^{1}$ then produces the form
\begin{align}
	\int_{0}^{1}\alpha^{(1)}\wedge R^{(1)} &= -\int_{0}^{1}tdt\wedge\alpha_{0}\wedge(\alpha_{0}-\alpha_{1})-\int_{0}^{1}(1-t)dt\wedge\alpha_{1}\wedge(\alpha_{0}-\alpha_{1})\nonumber\\ &= -\frac{1}{2}(\alpha_{0}+\alpha_{1})\wedge(\alpha_{0}-\alpha_{1})\label{intcurvmot}
\end{align}
on $\GG_{1}$.  Equation \eqref{intcurvmot} is geometrically opaque, and our immediate task now is to elucidate its geometric content.  First, we will prove that the factor $\alpha_{0}-\alpha_{1}$ has an interpretation as a path integral of the Bott curvature form $R$.

\begin{prop}\label{integralprop}
	Let $(M,\FF)$ be codimension $q$, and let $\alpha\in\Omega^{1}(\Fr^{+}(N);\mathfrak{gl}(q,\RB))$ correspond to a Bott connection on $N$ with associated curvature $R\in\Omega^{2}(\Fr^{+}(N);\mathfrak{gl}(q,\RB))$.  For $u\in\GG_{1}$, let $\gamma:[0,1]\rightarrow\Fr^{+}(N)$ be any smooth path in a leaf of $\FF_{\Fr^{+}(N)}$ that represents $u$. Letting $p:T\Fr^{+}(N)\rightarrow N_{\Fr^{+}(N)}$ denote the projection, for any $X\in T_{u}\GG_{1}$ choose a smooth vector field $\tilde{X}\in\Gamma^{\infty}(\gamma([0,1]);T\Fr^{+}(N))$ along $\gamma$ for which
	\begin{enumerate}
		\item $ds_{u}X =\tilde{X}_{\gamma(0)}$ and $dr_{u}X = \tilde{X}_{\gamma(1)}$, and 
		\item the projection $Z = p\tilde{X}\in\Gamma^{\infty}(\gamma([0,1]);N_{\Fr^{+}(N)})$ of $\tilde{X}$ to a normal vector field is parallel along $\gamma$ with respect to the Bott connection $\nabla^{\Fr^{+}(N)}$ (see Proposition \ref{bottconnections}) for the foliation $\FF_{\Fr^{+}(N)}$ determined by $\alpha$.
	\end{enumerate}
	Then 
	\begin{equation}\label{intcurvonfr}
	(\alpha_{0}-\alpha_{1})_{u}(X) = \int_{\gamma} R(\dot{\gamma},\tilde{X}).
	\end{equation}
	In particular, the integral on the right hand side does not depend on the choices of $\gamma$ and $\tilde{X}$.
\end{prop}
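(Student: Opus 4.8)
The plan is to verify the identity pointwise in the curve parameter by means of the structure equation $R = d\alpha + \alpha\wedge\alpha$, after first reducing it to a statement about a single $\mathfrak{gl}(q,\RB)$-valued function of $t$. Since $\alpha_{0} = (p^{1}_{0})^{*}\alpha = r^{*}\alpha$ and $\alpha_{1} = (p^{1}_{1})^{*}\alpha = s^{*}\alpha$ on $\GG_{1}$, and since by hypothesis (1) one has $dr_{u}X = \tilde{X}_{\gamma(1)}$ and $ds_{u}X = \tilde{X}_{\gamma(0)}$, the fundamental theorem of calculus gives
\[
(\alpha_{0}-\alpha_{1})_{u}(X) = \alpha_{\gamma(1)}(\tilde{X}_{\gamma(1)}) - \alpha_{\gamma(0)}(\tilde{X}_{\gamma(0)}) = \int_{0}^{1}\frac{d}{dt}\Big[\alpha_{\gamma(t)}(\tilde{X}_{\gamma(t)})\Big]\,dt.
\]
Since $\int_{\gamma}R(\dot{\gamma},\tilde{X})$ means $\int_{0}^{1}R_{\gamma(t)}(\dot{\gamma}(t),\tilde{X}_{\gamma(t)})\,dt$, it suffices to show that $\frac{d}{dt}\big[\alpha_{\gamma(t)}(\tilde{X}_{\gamma(t)})\big] = R_{\gamma(t)}(\dot{\gamma}(t),\tilde{X}_{\gamma(t)})$ for every $t$.

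For this I would work locally around a fixed point $\gamma(t_{0})$: choose a leafwise vector field $L$ for $\FF_{\Fr^{+}(N)}$, defined on a neighbourhood of $\gamma(t_{0})$ in $\Fr^{+}(N)$, that restricts to $\dot{\gamma}$ along $\gamma$ and has $\gamma$ as an integral curve, together with a vector field $\widehat{X}$ near $\gamma(t_{0})$ restricting to $\tilde{X}$ along $\gamma$. Then $\frac{d}{dt}\big[\alpha(\tilde{X})\big]$ at $\gamma(t)$ equals $L\big(\alpha(\widehat{X})\big)$, and the identity $d\alpha(L,\widehat{X}) = L\big(\alpha(\widehat{X})\big) - \widehat{X}\big(\alpha(L)\big) - \alpha\big([L,\widehat{X}]\big)$ applies componentwise. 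Because $\alpha$ is a Bott connection form, $\alpha(L)$ vanishes identically near $\gamma(t_{0})$ (since $L$ is tangent to $\FF_{\Fr^{+}(N)}\subset\ker\alpha$), so the term $\widehat{X}(\alpha(L))$ is zero and likewise $(\alpha\wedge\alpha)(L,\widehat{X}) = 0$; hence along $\gamma$ one has $L\big(\alpha(\widehat{X})\big) = d\alpha(L,\widehat{X}) + \alpha([L,\widehat{X}]) = R(L,\widehat{X}) + \alpha([L,\widehat{X}])$. Matching this with the required identity reduces the problem to showing that $\alpha([L,\widehat{X}])$ vanishes along $\gamma$, and this is the crux of the argument.

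To kill $\alpha([L,\widehat{X}])$ I would invoke hypothesis (2). By Proposition \ref{bottconnections}, $\nabla^{\Fr^{+}(N)}$ is a Bott connection for $(\Fr^{+}(N),\FF_{\Fr^{+}(N)})$, so the defining property \eqref{bottconclass} of Bott connections (with $M$, $\FF$, $p$ replaced by their $\Fr^{+}(N)$ counterparts) gives $\nabla^{\Fr^{+}(N)}_{L}(p\widehat{X}) = p[L,\widehat{X}]$ along $\gamma$; but $p\widehat{X} = Z$ is $\nabla^{\Fr^{+}(N)}$-parallel along $\gamma$ by hypothesis (2), so $p[L,\widehat{X}] = 0$ there, i.e.\ $[L,\widehat{X}]$ is tangent to $\FF_{\Fr^{+}(N)}$ along $\gamma$, whence $\alpha([L,\widehat{X}]) = 0$ along $\gamma$. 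Combining with the previous paragraph yields $\frac{d}{dt}\big[\alpha_{\gamma(t)}(\tilde{X}_{\gamma(t)})\big] = R_{\gamma(t)}(\dot{\gamma}(t),\tilde{X}_{\gamma(t)})$, and integrating over $[0,1]$ establishes \eqref{intcurvonfr}; independence of the choices of $\gamma$ and $\tilde{X}$ is then immediate, since the left-hand side of \eqref{intcurvonfr} refers to neither. The only point requiring care is purely technical: the leafwise extension $L$ need only be constructed locally — and if $\gamma$ self-intersects one patches over a finite cover of $[0,1]$ — which is harmless because the identity to be proved is pointwise in $t$, and one checks that $[L,\widehat{X}]|_{\gamma}$ does not depend on the chosen extensions once $L$ is required to have $\gamma$ as an integral curve.
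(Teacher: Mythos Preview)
Your proof is correct and follows essentially the same route as the paper: both arguments use the structure equation and the Bott property $\alpha(\dot{\gamma})=0$ to reduce $R(\dot{\gamma},\tilde{X})$ to $\dot{\gamma}\,\alpha(\tilde{X})$ modulo the bracket term $\alpha([\dot{\gamma},\tilde{X}])$, kill that term via hypothesis (2) and the Bott identity $p[L,\widehat{X}]=\nabla^{\Fr^{+}(N)}_{L}(p\widehat{X})$, and then integrate. The only difference is cosmetic---you run the computation from the $(\alpha_{0}-\alpha_{1})$ side and are more explicit about the local extension of $\dot{\gamma}$ and $\tilde{X}$ to vector fields, whereas the paper writes $\dot{\gamma}\alpha(\tilde{X})$ and $[\dot{\gamma},\tilde{X}]$ directly.
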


\begin{proof}
	That such a vector field $\tilde{X}$ can be chosen is a consequence of the surjectivity of the projection $p$ together with the definition of the parallel transport map for $N_{\Fr^{+}(N)}$ along $\gamma$.  More precisely, since $\GG$ acts on $N$ by parallel transport with respect to $\nabla^{\flat}$, the projections of $X_{u}$ to a normal vector on $N$ via the range and source are mapped to one another by parallel transport along any path representing $u$.  We compute
	\[
	R(\dot{\gamma},\tilde{X}) = d\alpha(\dot{\gamma},\tilde{X})+(\alpha\wedge\alpha)(\dot{\gamma},\tilde{X}) = \dot{\gamma}\alpha(\tilde{X})-\tilde{X}\alpha(\dot{\gamma})-\alpha([\dot{\gamma},\tilde{X}])+(\alpha\wedge\alpha)(\dot{\gamma},\tilde{X}).
	\]
	Since $\alpha$ is a Bott connection, all the leafwise tangent vectors $\dot{\gamma}$ lie in the kernel of $\alpha$ and so we can simplify to
	\[
	R(\dot{\gamma},\tilde{X}) = \dot{\gamma}\alpha(\tilde{X})-\alpha([\dot{\gamma},\tilde{X}]).
	\]
	By definition of a Bott connection we moreover have 
	\[
	[\dot{\gamma},\tilde{X}] = \nabla^{\Fr^{+}(N)}_{\dot{\gamma}}(Z)+[\dot{\gamma},\tilde{X}_{T\FF_{\Fr^{+}(N)}}] = [\dot{\gamma},\tilde{X}_{T\FF_{\Fr^{+}(N)}}]
	\]
	since $Z$ is parallel along $\gamma$, and where $\tilde{X}_{T\FF_{\Fr^{+}(N)}}$ is the leafwise component of $\tilde{X}$.  Since $T\FF_{\Fr^{+}(N)}$ is closed under brackets, $[\dot{\gamma},\tilde{X}]$ is also annihilated by $\alpha$ and we have
	\[
	R(\dot{\gamma},\tilde{X}) = \dot{\gamma}\alpha(\tilde{X}).
	\]
	Therefore by Stokes' theorem
	\[
	\int_{\gamma}R(\dot{\gamma},\tilde{X}) = \alpha_{\gamma(1)}(\tilde{X}_{\gamma(1)})-\alpha_{\gamma(0)}(\tilde{X}_{\gamma(0)}) = (\alpha_{0}-\alpha_{1})_{u}(X)
	\]
	as claimed.
\end{proof}

Next we show that the number $(\alpha_{0}-\alpha_{1})_{u}(X)$ defined for $X\in T_{u}\GG_{1}$ depends only on the projection of $X$ to a vector in the normal bundle $N$ of $(M,\FF)$, obtained via the range or source.

\begin{prop}\label{integralprop1}
	Let $\pi_{\GG_{1}}:\GG_{1}\rightarrow\GG$ be the projection induced by $\pi_{\Fr^{+}(N)}:\Fr^{+}(N)\rightarrow M$, and let $T_{r}\GG_{1}$ and $T_{s}\GG_{1}$ denote the tangent bundles to the range and source fibres of $\GG_{1}$ respectively, so that the differentials of $r\circ\pi_{\GG_{1}}$ and $s\circ\pi_{\GG_{1}}$ define fibrewise isomorphisms
	\[
	N_{1}:=T\GG_{1}/\big(T_{r}\GG_{1}\oplus T_{s}\GG_{1}\oplus\ker(d\pi_{\GG_{1}})\big)\rightarrow N.
	\]
	Then the formula in Equation \eqref{intcurvonfr} depends only on the class $[X]\in (N_{1})_{u}$ determined by $X$, and not on the choices of $\gamma$ and $\tilde{X}$.
\end{prop}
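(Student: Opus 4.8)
The plan is to show that the linear functional $(\alpha_{0}-\alpha_{1})_{u}$ on $T_{u}\GG_{1}$ annihilates each of the three summands $T_{r}\GG_{1}$, $T_{s}\GG_{1}$ and $\ker(d\pi_{\GG_{1}})$, so that it descends to the quotient $(N_{1})_{u}$. The independence of the choices of $\gamma$ and $\tilde{X}$ is already the content of Proposition \ref{integralprop}: by that result the right-hand side of \eqref{intcurvonfr} equals $(\alpha_{0}-\alpha_{1})_{u}(X) = (r^{*}\alpha-s^{*}\alpha)_{u}(X)$, where $r,s$ denote the range and source maps of $\GG_{1}$ and $\alpha_{0} = (p^{1}_{0})^{*}\alpha = r^{*}\alpha$, $\alpha_{1} = (p^{1}_{1})^{*}\alpha = s^{*}\alpha$; this expression involves no such choices.

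First I would take $X\in T_{s}\GG_{1}$, tangent at $u$ to a source fibre of $\GG_{1}$. Then $ds_{u}(X) = 0$, so $(s^{*}\alpha)_{u}(X) = 0$. Writing $u = (v,\phi)$ with $v\in\GG$ and $\phi\in\Fr^{+}(N)$, the restriction of $r$ to that source fibre is, under the natural identification with a source fibre of $\GG$, the orbit map $w\mapsto w\cdot\phi$, whose image is the leaf of $\FF_{\Fr^{+}(N)}$ through $\phi$; hence $dr_{u}(X)\in T\FF_{\Fr^{+}(N)}$, and since $\alpha$ is a Bott connection form (Definition \ref{bottconform}) we have $T\FF_{\Fr^{+}(N)}\subset\ker\alpha$, so $(r^{*}\alpha)_{u}(X) = 0$. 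Thus $(\alpha_{0}-\alpha_{1})_{u}(X) = 0$. The case $X\in T_{r}\GG_{1}$ is symmetric: now $dr_{u}(X) = 0$, while the restriction of $s$ to the range fibre of $\GG_{1}$ at $u$ is the orbit map $w\mapsto w^{-1}\cdot(v\cdot\phi)$, again with image inside the leaf of $\FF_{\Fr^{+}(N)}$ through $v\cdot\phi$, so $ds_{u}(X)\in T\FF_{\Fr^{+}(N)}\subset\ker\alpha$ and $(\alpha_{0}-\alpha_{1})_{u}(X) = 0$. (Alternatively, in both cases one may argue via \eqref{intcurvonfr}: condition (2) of Proposition \ref{integralprop} then forces the parallel normal field $p\tilde{X}$ to have a vanishing endpoint, hence to vanish identically, so $\tilde{X}$ is everywhere leafwise and $R(\dot\gamma,\tilde{X}) = 0$ by the computation in the proof of Proposition \ref{integralprop}.)

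Next I would take $X\in\ker(d\pi_{\GG_{1}})$. With $u = (v,\phi)$ as above, such an $X$ is of the form $(0,V^{Y}_{\phi})$ for the fundamental vector field $V^{Y}$ of some $Y\in\mathfrak{gl}(q,\RB)$ on $\Fr^{+}(N)$. Then $ds_{u}(X) = V^{Y}_{\phi}$, and since $r(v,\phi\cdot\exp(tY)) = (v\cdot\phi)\cdot\exp(tY)$ for all $t$, we get $dr_{u}(X) = V^{Y}_{v\cdot\phi}$. Because $\alpha$ is a connection form it evaluates to $Y$ on $V^{Y}$ at every point, so $(r^{*}\alpha)_{u}(X) = Y = (s^{*}\alpha)_{u}(X)$, whence $(\alpha_{0}-\alpha_{1})_{u}(X) = 0$. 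Combining the three cases, $(\alpha_{0}-\alpha_{1})_{u}$ vanishes on $T_{r}\GG_{1}\oplus T_{s}\GG_{1}\oplus\ker(d\pi_{\GG_{1}})$ and therefore descends to a well-defined functional on $(N_{1})_{u}$, which is the assertion.

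The only delicate point I anticipate is the identification, in the first two cases, of the restriction of the range (respectively source) map of $\GG_{1}$ to a source (respectively range) fibre with an orbit map of the $\GG$-action defining $\FF_{\Fr^{+}(N)}$; this is precisely what forces the relevant differential to land in $T\FF_{\Fr^{+}(N)}\subset\ker\alpha$. Everything else is a direct evaluation of the Bott connection form $\alpha$ on leafwise and on vertical tangent vectors, using Definition \ref{bottconform} and the defining property $\alpha(V^{Y}) = Y$ of a connection form.
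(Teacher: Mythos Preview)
Your proposal is correct and follows essentially the same approach as the paper's proof: both show that $(\alpha_{0}-\alpha_{1})_{u}$ annihilates the vertical subspace $\ker(d\pi_{\GG_{1}})$ (using that a connection form returns $Y$ on the fundamental vector field $V^{Y}$ at every point) and annihilates the range- and source-fibre tangents (using that the relevant images under $dr$ and $ds$ lie in $T\FF_{\Fr^{+}(N)}\subset\ker\alpha$). Your treatment is slightly more explicit in handling $T_{r}\GG_{1}$ and $T_{s}\GG_{1}$ separately and observing that one of $dr_{u}(X)$, $ds_{u}(X)$ is actually zero in each case, whereas the paper bundles the two together; the content is the same.
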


\begin{proof}
	To see that $(\alpha_{0}-\alpha_{1})_{u}(X)$ depends only on the class of $X$ in $N_{1}$ we consider a perturbation $X' = X+Y$ of $X$ where $Y\in\ker(d\pi_{\GG_{1}})_{u}$.  Identify $\ker(d\pi_{\GG_{1}})$ with $\GG\ltimes(\ker(d\pi_{\Fr^{+}(N)})) = \GG\ltimes(\Fr^{+}(N)\times\mathfrak{gl}(q,\RB))$.  By commutativity of the action of $\GG$ on $\Fr^{+}(N)$ with that of $\GL^{+}(q,\RB)$, the action of $\GG$ on the $\mathfrak{gl}(q,\RB)$ factor is by the identity, and we have $dr_{u}(Y) = ds_{u}(Y) = Y$.  Since $\alpha$ is a connection form we have $\alpha(Y) = Y$ and therefore
	\[
	(\alpha_{0}-\alpha_{1})_{u}(X+Y) = (\alpha_{0}-\alpha_{1})_{u}(X)+Y-Y = (\alpha_{0}-\alpha_{1})_{u}(X).
	\]
	Now suppose that $X' = X+Z$, where $Z\in T(\GG_{1}^{r(u)})\oplus T((\GG_{1})_{s(u)})$.  Then both $dr_{u}Z$ and $ds_{u}Z$ are contained in $T\FF_{\Fr^{+}(N)}$ and therefore are annihilated by $\alpha$.  Hence
	\[
	(\alpha_{0}-\alpha_{1})_{u}(X+Z) = (\alpha_{0}-\alpha_{1})_{u}(X)
	\]
	as required.
\end{proof}

In the paper \cite{chgpd}, the differential form
\[
\partial\alpha = (p^{1}_{0})^{*}\alpha-(p^{1}_{1})^{*}\alpha = (\epsilon^{1}_{0})^{*}\alpha-(\epsilon^{1}_{1})^{*}\alpha = r^{*}\alpha-s^{*}\alpha\in\Omega^{1}(\GG_{1};\mathfrak{gl}(q,\RB))
\]
determined by a connection form $\alpha$ appears as a measure of the failure of the connection form $\alpha$ to be invariant under the action of $\GG_{1}$.  In light of Proposition \ref{integralprop} we give any such differential form arising from a Bott connection a special name.

\begin{defn}\label{intcurvdef}
	Given a Bott connection form $\alpha\in\Omega^{1}(\Fr^{+}(N);\mathfrak{gl}(q,\RB))$, we refer to the 1-form
	\[
	R^{\GG}:=\partial\alpha = (r^{*}\alpha-s^{*}\alpha) \in\Omega^{1}(\GG_{1};\mathfrak{gl}(q,\RB))
	\]
	as the \textbf{integrated curvature} of $\alpha$.
\end{defn}

\begin{rmk}\normalfont
	Path integrals of differential forms such as in Proposition \ref{integralprop} are already of great use in determining de Rham representatives for loop space cohomology \cite{chen1, chen2, gjp, wilson}.  Since the holonomy groupoid is really a coarse sort of ``path space", in light of Proposition \ref{integralprop} one expects it to be possible to obtain a characteristic map for the holonomy groupoid defined in terms of iterated path integrals.  This would provide an exciting new geometric window into the existing theory of foliations, and has the potential to open up links with loop space theory.  We leave this question to a future paper.
\end{rmk}

Let us now come back to the Godbillon-Vey invariant of a codimension 1 foliated manifold $(M,\FF)$.  Denoting $\alpha^{\GG}:=-\frac{1}{2}(r^{*}\alpha+s^{*}\alpha)$ for notational simplicity, the differential form in \eqref{intcurvmot} can now be written
\[
\int_{0}^{1}\alpha^{(1)}\wedge R^{(1)}  = \alpha^{\GG}\wedge R^{\GG}\in\Omega^{2}(\GG_{1}^{(1)}).
\]
Thus we have reconciled the Chern-Weil description of the Godbillon-Vey invariant, as ``Bott connection wedge curvature", with the image of the Godbillon-Vey invariant arising from the characteristic map of Theorem \ref{characteristicmap}.  Proposition \ref{integralprop1} now allows us to integrate against $\alpha^{\GG}\wedge R^{\GG}$ in the following way.

\begin{lemma}\label{gvdensity}
	Let $(M,\FF)$ be a transversely orientable foliated $n$-manifold of codimension 1, and let $a^{0},a^{1}\in C_{c}^{\infty}(\GG_{1};\Omega^{\frac{1}{2}})$.  Then, setting $x = \pi_{\Fr^{+}(N)}(\phi)$ for $\phi\in\Fr^{+}(N)$, the formula
	\[
	gv(a^{0},a^{1})_{\phi}:=\int_{u\in\GG_{x}}a^{0}(u^{-1},u\cdot\phi)a^{1}(u,\phi)(\alpha^{\GG}\wedge R^{\GG})_{(u,\phi)}
	\]
	defines a compactly supported 1-density $gv(a^{0},a^{1})\in\Gamma(|\Fr^{+}(N)|)$.
\end{lemma}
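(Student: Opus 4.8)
The plan is to read off the integrand over the source fibre $(\GG_{1})_{\phi}\cong\GG_{x}$ and separate the roles of its factors: the leafwise half-densities $a^{0},a^{1}$ supply both a leafwise $1$-density at $\phi$ and the $1$-density along the integration fibre, while the $2$-form $\alpha^{\GG}\wedge R^{\GG}$ supplies a $1$-density in the two remaining, transverse, directions of $\Fr^{+}(N)$. To make the first point precise, fix $\phi\in\Fr^{+}(N)$ over $x=\pi_{\Fr^{+}(N)}(\phi)$ and use $d\pi_{\Fr^{+}(N)}$ to identify leafwise (co)tangent data on $\Fr^{+}(N)$ with its pullback from $M$. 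By the standard description of the half-density bundle of a Lie groupoid recalled in \cite{folops}, $a^{1}(u,\phi)$ is a product of a half-density on $T_{r(u)}\FF$ and one on $T_{x}\FF$, and $a^{0}(u^{-1},u\cdot\phi)$ is a product of a half-density on $T_{x}\FF$ and one on $T_{r(u)}\FF$, so that
\[
a^{0}(u^{-1},u\cdot\phi)\,a^{1}(u,\phi)\in |T_{x}\FF|\otimes|T_{r(u)}\FF|.
\]
Under the isomorphism $T_{(u,\phi)}(\GG_{1})_{\phi}\cong T_{r(u)}\FF$ induced by the range map of $\GG_{1}$, the second factor is a $1$-density on the source fibre $(\GG_{1})_{\phi}$ at the point $(u,\phi)$, and under $d\pi_{\Fr^{+}(N)}$ the first factor is a $1$-density on $T_{\phi}\FF_{\Fr^{+}(N)}$; this is precisely the bookkeeping that underlies convolution on $C_{c}^{\infty}(\GG_{1};\Omega^{\frac{1}{2}})$ evaluated at a unit.

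Next I would show that $(\alpha^{\GG}\wedge R^{\GG})_{(u,\phi)}$ determines a $1$-density on the rank-$2$ bundle $N_{\Fr^{+}(N),\phi}$. Because $\alpha$ is a Bott connection form, $r^{*}\alpha$ and $s^{*}\alpha$ both vanish on the leafwise tangent directions of the source and range fibres of $\GG_{1}$ (these are carried by $dr$ and $ds$ into $T\FF_{\Fr^{+}(N)}=\ker\alpha^{\flat}$), so $\alpha^{\GG}$ and $R^{\GG}$ annihilate $T_{s}\GG_{1}+T_{r}\GG_{1}$ and $\alpha^{\GG}\wedge R^{\GG}$ factors through $T_{(u,\phi)}\GG_{1}/(T_{s}\GG_{1}+T_{r}\GG_{1})$. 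Since $s$ restricts to a covering of each range fibre onto its leaf, $\overline{ds}$ identifies this quotient with $N_{\Fr^{+}(N),\phi}=V_{\phi}\oplus H_{\phi}$ in the notation of Proposition \ref{bottconnections}. Trivialising $V=\ker d\pi_{\Fr^{+}(N)}$ canonically by fundamental vector fields, using the triviality of the $\GG$-action on $\ker d\pi_{\GG_{1}}$ from Proposition \ref{integralprop1}, and using that $H\cong\pi_{\Fr^{+}(N)}^{*}N$ has rank $1$ in codimension $1$ (so $R^{\GG}|_{V}=0$ and $\alpha^{\GG}|_{H}\wedge R^{\GG}|_{H}=0$), one finds that on $V_{\phi}\oplus H_{\phi}$ the form $\alpha^{\GG}\wedge R^{\GG}$ equals $c_{V}\wedge R^{\GG}|_{H_{\phi}}$, where $c_{V}$ is the canonical volume element of $V_{\phi}$ and $R^{\GG}|_{H_{\phi}}$ is the induced $1$-form on $H$, which is well defined by Proposition \ref{integralprop1}. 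The chosen transverse orientation of $(M,\FF)$ orients $H\cong\pi_{\Fr^{+}(N)}^{*}N$, and with the canonical orientation of $V$ this orients the rank-$2$ bundle $N_{\Fr^{+}(N)}$; a $2$-form on an oriented rank-$2$ vector space is a $1$-density, so we obtain a $1$-density $\nu_{u}$ on $N_{\Fr^{+}(N),\phi}$. The matching of the form-degree $2$ with the transverse rank $2$ is the codimension-$1$ phenomenon that makes this work.

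Finally I would assemble the pieces and check the analytic points. The short exact sequence $0\to T_{\phi}\FF_{\Fr^{+}(N)}\to T_{\phi}\Fr^{+}(N)\to N_{\Fr^{+}(N),\phi}\to 0$ gives a canonical isomorphism $|\Fr^{+}(N)|_{\phi}\cong|T_{\phi}\FF_{\Fr^{+}(N)}|\otimes|N_{\Fr^{+}(N),\phi}|$, so tensoring the leafwise $1$-density at $\phi$ from the first step with $\nu_{u}$ from the second gives an element of $|\Fr^{+}(N)|_{\phi}$ depending on $u$; multiplying this by the remaining $1$-density along $(\GG_{1})_{\phi}$ and integrating it out over $u\in(\GG_{1})_{\phi}\cong\GG_{x}$ yields $gv(a^{0},a^{1})_{\phi}\in|\Fr^{+}(N)|_{\phi}$. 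Convergence is immediate: for each $\phi$ the integrand is supported on a compact subset of the Hausdorff manifold $(\GG_{1})_{\phi}$ because $a^{0},a^{1}$ have compact support, exactly as in the definition of convolution on $C_{c}^{\infty}(\GG_{1};\Omega^{\frac{1}{2}})$ in \cite{folops}. Smoothness of the integrand in $(u,\phi)$ together with this compactness lets one differentiate under the integral sign, so $\phi\mapsto gv(a^{0},a^{1})_{\phi}$ is smooth; and $gv(a^{0},a^{1})_{\phi}=0$ unless $\phi$ lies in the image of $\mathrm{supp}\,a^{1}$ under the source map of $\GG_{1}$, which is compact, so $gv(a^{0},a^{1})\in\Gamma(|\Fr^{+}(N)|)$ is compactly supported. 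The step needing the most care is the second: tracking exactly which directions $\alpha^{\GG}$ and $R^{\GG}$ kill on the non-Hausdorff groupoid $\GG_{1}$, and recognising that transverse orientability is precisely what converts the transverse $2$-form into a $1$-density.
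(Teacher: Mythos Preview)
Your proof is correct and follows essentially the same approach as the paper: identify the product $a^{0}(u^{-1},u\cdot\phi)a^{1}(u,\phi)$ as a leafwise density at $\phi$ tensor a density along the integration fibre, show that $(\alpha^{\GG}\wedge R^{\GG})_{(u,\phi)}$ lives purely in the two transverse directions of $\Fr^{+}(N)$ at $\phi$, and assemble via $|\Fr^{+}(N)|_{\phi}\cong|T_{\phi}\FF_{\Fr^{+}(N)}|\otimes|N_{\Fr^{+}(N),\phi}|$. The only difference is one of execution: the paper carries out the transverse step in a local foliated chart $B_{1}\times B_{2}\times V\times\RB^{*}_{+}$, computing directly that $R^{\GG}=f_{1}\,dz$ (via Proposition~\ref{integralprop1}) and $\alpha^{\GG}=f_{2}\,dz+t^{-1}dt$, hence $\alpha^{\GG}\wedge R^{\GG}=f\,t^{-1}dt\wedge dz$, whereas you reach the same conclusion intrinsically by using the Bott property to kill $T_{s}\GG_{1}+T_{r}\GG_{1}$ and then the $V\oplus H$ splitting of $N_{\Fr^{+}(N)}$.
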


\begin{proof}
	Fix $\phi\in\Fr^{+}(N)$.  For each $(u,\phi)\in\GG_{1}$, we have
	\[
	a^{0}(u^{-1},u\cdot\phi)a^{1}(u,\phi)\in|\FF_{\Fr^{+}(N)}|_{\phi}\otimes|\FF_{\Fr^{+}(N)}|_{u\cdot\phi},
	\]
	while $(\alpha^{\GG}\wedge R^{\GG})_{(u,\phi)}\in\Lambda^{2}(T^{*}_{(u,\phi)}\GG_{1})$, which we now describe using coordinates.
	
	Consider a chart $B_{1}\times B_{2}\times V\times\RB^{*}_{+}$ for $\GG_{1}$ about $(u,\phi)$, where $B_{1},B_{2}$ are open balls in $\RB^{\dim(\FF)}$ and $V$ an open ball in $\RB$ such that $B_{2}\times V\cong U_{2}\subset M$ is a foliated chart about $\pi_{\Fr^{+}(N)}(\phi)$ and $B_{1}\times h_{u}(V)\cong U_{1}$ is a foliated chart about $\pi_{\Fr^{+}(N)}(u\cdot\phi)$, where $h_{u}:V\rightarrow h_{u}(V)\subset\RB$ is a holonomy diffeomorphism representing $u$.  By Proposition \ref{integralprop1}, in the local coordinates $((x^{i}_{1})_{i=1}^{\dim{\FF}};(x^{i}_{2})_{i=1}^{\dim(\FF)};z;t)\in B_{1}\times B_{2}\times V\times\RB^{*}_{+}$ we have that $R^{\GG} = f_{1}dz$ for some $f_{1}$ defined on $B_{1}\times B_{2}\times V\times\RB^{*}_{+}$.  Moreover, with $t^{-1}dt$ the Maurer-Cartan form on $\RB^{*}_{+}$, $\alpha^{\GG}$ is of the form $f_{2}dz+t^{-1}dt$ for some smooth $f_{2}$ defined on $B_{1}\times B_{2}\times V$.  Consequently, $\alpha^{\GG}\wedge R^{\GG}$ is of the form $ft^{-1}dt\wedge dz$ for some smooth function $f$ defined on $B_{1}\times B_{2}\times V\times\RB^{*}_{+}$.
	
	Since the coordinate differentials $dz$ and $dt$ span $T_{\phi}^{*}\Fr^{+}(N)\ominus T_{\phi}^{*}\FF_{\Fr^{+}(N)}$, at each point $(u,\phi)\in\GG_{1}$ we have that
	\[
	a^{0}(u^{-1},u\cdot\phi)a^{1}(u,\phi)(\alpha^{\GG}\wedge R^{\GG})_{(u,\phi)}\in|\Fr^{+}(N)|_{\phi}\otimes|T\FF_{\Fr^{+}(N)}|_{u\cdot\phi}.
	\]
	Then by compact support of $a^{0}$ and $a^{1}$, the integral
	\[
	gv(a^{0},a^{1})_{\phi} = \int_{u\in\GG_{x}}a^{0}(u^{-1},u\cdot\phi)a^{1}(u,\phi)(\alpha^{\GG}\wedge R^{\GG})_{(u,\phi)}\in|\Fr^{+}(N)|_{\phi}
	\]
	is well-defined and $gv(a^{0},a^{1})$ is a compactly supported density on $\Fr^{+}(N)$.
\end{proof}

\begin{thm}\label{gvcocycle}
	Let $(M,\FF)$ be a transversely orientable foliated $n$-manifold of codimension 1.  Then for $a^{0},a^{1}\in C_{c}^{\infty}(\GG_{1};\Omega^{\frac{1}{2}})$ the formula
	\[
	\varphi_{gv}(a^{0},a^{1}):=\int_{\phi\in\Fr^{+}(N)}gv(a^{0},a^{1})_{\phi} = \int_{(u,\phi)\in\GG_{1}}a^{0}(u^{-1},u\cdot\phi)a^{1}(u,\phi)(\alpha^{\GG}\wedge R^{\GG})_{(u,\phi)}
	\]
	defines a cyclic 1-cocycle $\varphi_{gv}$ on the convolution algebra $C_{c}^{\infty}(\GG_{1};\Omega^{\frac{1}{2}})$.
\end{thm}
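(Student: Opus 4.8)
The plan is to check the two defining properties of a cyclic $1$-cocycle directly from the formula for $\varphi_{gv}$: the cyclic antisymmetry $\varphi_{gv}(a^{0},a^{1}) = -\varphi_{gv}(a^{1},a^{0})$, and the Hochschild cocycle identity $b\varphi_{gv}(a^{0},a^{1},a^{2}) = \varphi_{gv}(a^{0}a^{1},a^{2}) - \varphi_{gv}(a^{0},a^{1}a^{2}) + \varphi_{gv}(a^{2}a^{0},a^{1}) = 0$. Write $\omega:=\alpha^{\GG}\wedge R^{\GG}\in\Omega^{2}(\GG_{1})$ and let $\iota:\GG_{1}\rightarrow\GG_{1}$, $\iota(u,\phi) = (u^{-1},u\cdot\phi)$, be inversion. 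Two properties of $\omega$ will do the work. First, since $r\circ\iota = s$ and $s\circ\iota = r$ while $\alpha^{\GG} = -\tfrac{1}{2}(r^{*}\alpha + s^{*}\alpha)$ is symmetric and $R^{\GG} = r^{*}\alpha - s^{*}\alpha$ antisymmetric in $r,s$, we get $\iota^{*}\alpha^{\GG} = \alpha^{\GG}$, $\iota^{*}R^{\GG} = -R^{\GG}$, hence $\iota^{*}\omega = -\omega$. Second, $\omega$ is $\partial$-closed: it is the $\Omega^{2}(\GG_{1}^{(1)})$-component of $\phi^{\GG}_{\alpha}(h_{1}c_{1})\in\Tot^{3}\Omega(\GG_{1})$ identified in the computation preceding Proposition \ref{integralprop}; the remaining components of $\phi^{\GG}_{\alpha}(h_{1}c_{1})$ in $\Omega^{1}(\GG_{1}^{(2)})$ and $\Omega^{0}(\GG_{1}^{(3)})$ vanish because the simplicial form $\alpha^{(k)}\wedge R^{(k)}$ has degree at most $1$ along $\Delta^{k}$ and so integrates to $0$ over $\Delta^{k}$ for $k\geq 2$; and $\phi^{\GG}_{\alpha}(h_{1}c_{1})$ is $\delta$-closed because $h_{1}c_{1}$ is a cocycle in $\underline{WO}_{1}$, as $d(h_{1}c_{1}) = c_{1}^{2}$ maps to $0$ by Bott vanishing (Theorem \ref{bottvan2}) since $\deg(c_{1}^{2}) = 4 > 2q$. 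Reading off the $\Omega^{2}(\GG_{1}^{(2)})$-component of $\delta\phi^{\GG}_{\alpha}(h_{1}c_{1}) = 0$, which (the other contributing component being $0$) is exactly $\partial\omega$, yields $\partial\omega = (\epsilon^{2}_{0})^{*}\omega - (\epsilon^{2}_{1})^{*}\omega + (\epsilon^{2}_{2})^{*}\omega = 0$.

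For cyclic antisymmetry I would change variables by $\iota$ in the integral defining $\varphi_{gv}(a^{0},a^{1})$. Since $\iota$ is a measure-preserving involution of $\GG_{1}$ for the density assembled from leafwise half-densities, and the substitution $(v,\psi) = \iota(u,\phi)$ turns $a^{0}((u,\phi)^{-1})a^{1}(u,\phi)$ into $a^{0}(v,\psi)a^{1}((v,\psi)^{-1})$ while $\omega$ acquires a sign via $\iota^{*}\omega = -\omega$, one obtains $\varphi_{gv}(a^{0},a^{1}) = -\varphi_{gv}(a^{1},a^{0})$, the tensor factors of half-densities commuting freely.

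For the cocycle identity I would expand each of the three terms using the convolution product on $C_{c}^{\infty}(\GG_{1};\Omega^{\frac{1}{2}})$, then perform a measure-preserving change of variables in each so that the two ``resolved'' factors become a composable pair $(g_{1},g_{2})\in\GG_{1}^{(2)}$. The outcome I expect is
\[
\varphi_{gv}(a^{0}a^{1},a^{2}) = \int_{\GG_{1}^{(2)}} a^{0}((g_{1}g_{2})^{-1})\,a^{1}(g_{1})\,a^{2}(g_{2})\,(\epsilon^{2}_{0})^{*}\omega,
\]
and similarly $\varphi_{gv}(a^{0},a^{1}a^{2})$ and $\varphi_{gv}(a^{2}a^{0},a^{1})$ equal the same integral with $(\epsilon^{2}_{0})^{*}\omega$ replaced by $(\epsilon^{2}_{1})^{*}\omega$ and $(\epsilon^{2}_{2})^{*}\omega$; indeed $\epsilon^{2}_{0}(g_{1},g_{2}) = g_{2}$, $\epsilon^{2}_{1}(g_{1},g_{2}) = g_{1}g_{2}$ and $\epsilon^{2}_{2}(g_{1},g_{2}) = g_{1}$, which is precisely why these three face maps arise. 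Combining with signs $+,-,+$ then gives
\[
b\varphi_{gv}(a^{0},a^{1},a^{2}) = \int_{\GG_{1}^{(2)}} a^{0}((g_{1}g_{2})^{-1})\,a^{1}(g_{1})\,a^{2}(g_{2})\,\partial\omega = 0.
\]
Conceptually, $\varphi_{gv}$ is a pairing of a cyclic chain built from $a^{0},a^{1}$ with the Bott--Shulman--Stasheff cocycle $\phi^{\GG}_{\alpha}(h_{1}c_{1})$, so its being a cocycle is dual to the $\delta$-closedness of the latter.

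I expect the main obstacle to be the third paragraph: correctly tracking the leafwise half-densities together with the transverse $2$-form $\omega$ through the three convolution unfoldings and the accompanying changes of variables on $\GG_{1}^{(2)}$, and confirming that each term genuinely reassembles into the claimed common integrand paired with the correct face-map pullback of $\omega$. The delicate case is the cyclic-permutation term $\varphi_{gv}(a^{2}a^{0},a^{1})$, where one must be careful to land on $(\epsilon^{2}_{2})^{*}\omega$ with the right sign; $\iota^{*}\omega = -\omega$ is the tool that controls any inversion appearing in this rearrangement. Well-definedness of the $\GG_{1}^{(2)}$-integrals is not an issue, following from Lemma \ref{gvdensity} applied fibrewise over $\GG_{1}$, so once the three terms are brought into the displayed common form the identity is purely formal.
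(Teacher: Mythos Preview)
Your proposal is correct and follows essentially the same route as the paper: both arguments reduce the Hochschild identity to $\partial(\alpha^{\GG}\wedge R^{\GG})=0$ by unfolding the convolutions and recognising the three terms as face-map pullbacks, and both reduce cyclicity to $(\alpha^{\GG}\wedge R^{\GG})_{v^{-1}}=-(\alpha^{\GG}\wedge R^{\GG})_{v}$. Two small differences are worth noting: for $\partial\omega=0$ you give a more careful justification than the paper (you explicitly observe that the $\Omega^{1}(\GG_{1}^{(2)})$ and $\Omega^{0}(\GG_{1}^{(3)})$ components of $\phi^{\GG}_{\alpha}(h_{1}c_{1})$ vanish, so that $\delta$-closedness really does force $\partial\omega=0$), whereas the paper asserts this more tersely; and for cyclicity you compute $\iota^{*}\omega=-\omega$ directly from the symmetry properties of $\alpha^{\GG}$ and $R^{\GG}$, while the paper instead evaluates $\partial\omega=0$ at $(v^{-1},v)$ and uses $R^{\GG}|_{\text{units}}=0$ to reach the same conclusion.
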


\begin{proof}
	We work with Connes' $\lambda$-complex \cite{ncg}.  For notational simplicity we denote elements of $\GG_{1}$ by $v_{i}$, and we use the notation $\int_{v_{0}v_{1}v_{2}\in\Fr^{+}(N)}$ to mean the iterated integral over all triples $(v_{0},v_{1},v_{2})\in\GG_{1}^{(3)}$ for which $v_{0}v_{1}v_{2}\in\Fr^{+}(N)$, followed by an integral over $\Fr^{+}(N)$.  For $a^{0},a^{1},a^{2}\in C_{c}^{\infty}(\GG_{1};\Omega^{\frac{1}{2}})$, we calculate
	\begin{align*}
		\varphi_{gv}(a^{0}a^{1},a^{2}) =& \int_{v_{0}v_{1}v_{2}\in\Fr^{+}(N)}a^{0}(v_{0})a^{1}(v_{1})a^{2}(v_{2})(\alpha^{\GG}\wedge R^{\GG})_{v_{2}}\\ =&\int_{v_{0}v_{1}v_{2}\in\Fr^{+}(N)}a^{0}(v_{0})a^{1}(v_{1})a^{2}(v_{2})\,(\epsilon^{2}_{0})^{*}(\alpha^{\GG}\wedge R^{\GG})_{(v_{1},v_{2})},
	\end{align*}
	\begin{align*}
		\varphi_{gv}(a^{0},a^{1}a^{2}) =& \int_{v_{0}v_{1}v_{2}\in\Fr^{+}(N)}a^{0}(v_{0})a^{1}(v_{1})a^{2}(v_{2})(\alpha^{\GG}\wedge R^{\GG})_{v_{1}v_{2}}\\ =&\int_{v_{0}v_{1}v_{2}\in\Fr^{+}(N)}a^{0}(v_{0})a^{1}(v_{1})a^{2}(v_{2})\,(\epsilon^{2}_{1})^{*}(\alpha^{\GG}\wedge R^{\GG})_{(v_{1},v_{2})}
	\end{align*}
	and
	\begin{align*}
		\varphi_{gv}(a^{2}a^{0},a^{1}) =& \int_{v_{2}v_{0}v_{1}\in\Fr^{+}(N)}a^{2}(v_{2})a^{0}(v_{0})a^{1}(v_{1})(\alpha^{\GG}\wedge R^{\GG})_{v_{1}}\\ =& \int_{v_{0}v_{1}v_{2}\in\Fr^{+}(N)}a^{0}(v_{0})a^{1}(v_{1})a^{2}(v_{2})\,(\epsilon^{2}_{2})^{*}(\alpha^{\GG}\wedge R^{\GG})_{(v_{1},v_{2})}.
	\end{align*}
	Because $h_{1}c_{1}\in\underline{WO}_{1}$ is closed under $d$, the component $\alpha^{\GG}\wedge R^{\GG}\in\Omega^{2}(\GG_{1}^{(1)})$ of its image under the cochain map $\psi_{\alpha}:\underline{WO}_{1}\rightarrow\Omega^{*}(\GG^{(*)}_{1})$ of Theorem \ref{characteristicmap} is closed under $\partial:\Omega^{2}(\GG^{(1)}_{1})\rightarrow\Omega^{2}(\GG^{(2)}_{1})$.  Thus
	\begin{align*}
		b\varphi_{gv}(a^{0},a^{1},a^{2}) =& \varphi_{gv}(a^{0}a^{1},a^{2})-\varphi_{gv}(a^{0},a^{1}a^{2})+\varphi_{gv}(a^{2}a^{0},a^{1})\\ =&  \int_{v_{0}v_{1}v_{2}\in\Fr^{+}(N)}a^{0}(v_{0})a^{1}(v_{1})a^{2}(v_{2})\,\partial(\alpha^{\GG}\wedge R^{\GG})_{(v_{1},v_{2})}\\ =& 0
	\end{align*}
	making $\varphi_{gv}$ a Hochschild cocycle.
	
	It remains only to check that $\varphi_{gv}(a^{0},a^{1}) = -\varphi_{gv}(a^{1},a^{0})$.  For this, we observe that by definition $R^{\GG}_{\phi} = \alpha_{\phi}-\alpha_{\phi} = 0$ for any unit $\phi\in\GG_{1}$, hence
	\[
	0 = \partial(\alpha^{\GG}\wedge R^{\GG})_{(v^{-1},v)} = (\alpha^{\GG}\wedge R^{\GG})_{v}-(\alpha^{\GG}\wedge R^{\GG})_{v^{-1}v}+(\alpha^{\GG}\wedge R^{\GG})_{v^{-1}} = (\alpha^{\GG}\wedge R^{\GG})_{v^{-1}}+(\alpha^{\GG}\wedge R^{\GG})_{v}.
	\]
	Therefore
	\begin{align*}
		\varphi_{gv}(a^{0},a^{1}) =& \int_{v\in\GG_{1}}a^{0}(v^{-1})a^{1}(v)(\alpha^{\GG}\wedge R^{\GG})_{v} = -\int_{v^{-1}\in\GG_{1}}a^{1}(v)a^{0}(v^{-1})(\alpha^{\GG}\wedge R^{\GG})_{v^{-1}}\\ =& -\varphi_{gv}(a^{1},a^{0})
	\end{align*}
	making $\varphi_{gv}$ a cyclic cocycle.
\end{proof}

\begin{defn}
	We refer to the cyclic cocycle $\varphi_{gv}$ on $C_{c}^{\infty}(\GG_{1};\Omega^{\frac{1}{2}})$ given in Theorem \ref{gvcocycle} as the \textbf{Godbillon-Vey cyclic cocycle}.
\end{defn}

\begin{rmk}\normalfont\label{whatsthepoint}The Godbillon-Vey cyclic cocycle for $C_{c}^{\infty}(\GG_{1};\Omega^{\frac{1}{2}})$ is the analogue of the Connes-Moscovici formula \cite[Proposition 19]{backindgeom} for the crossed product of a manifold by a discrete group action.  Note that in contrast with the \'{e}tale setting of Connes and Moscovici, the differential form $\alpha^{\GG}\wedge R^{\GG}$ on $\GG_{1}$ with respect to which $\varphi_{gv}$ is defined has, by Proposition \ref{integralprop}, an explicit interpretation in terms of the integral of the Bott curvature along paths representing elements in $\GG_{1}$.  Such a geometric interpretation is novel, and is completely lost in the \'{e}tale setting that has been almost exclusively used in studying the secondary characteristic classes of foliations using noncommutative geometry.
\end{rmk}

One has the following immediate corollary of Proposition \ref{integralprop} which, while  completely unsurprising, is novel due to our non-\'{e}tale perspective that incorporates the global transverse geometry of $(M,\FF)$.

\begin{cor}
	If $(M,\FF)$ is a codimension 1, transversely orientable foliated manifold with a flat Bott connection, then the Godbillon-Vey cyclic cocycle vanishes.\qed
\end{cor}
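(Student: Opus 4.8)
The plan is to reduce the statement to the vanishing of the integrated curvature form. Recall from Theorem \ref{gvcocycle} that $\varphi_{gv}$ is assembled entirely from the $2$-form $\alpha^{\GG}\wedge R^{\GG}\in\Omega^{2}(\GG_{1}^{(1)})$, where $R^{\GG}=\partial\alpha=r^{*}\alpha-s^{*}\alpha$ is the integrated curvature of the chosen Bott connection form $\alpha$ in the sense of Definition \ref{intcurvdef} (and, in the notation of Section \ref{sc3}, $R^{\GG}=\alpha_{0}-\alpha_{1}$ since $p^{1}_{0}=r$ and $p^{1}_{1}=s$). Hence it suffices to show that $R^{\GG}$ vanishes identically on $\GG_{1}$ whenever the Bott connection is flat: in that case $\alpha^{\GG}\wedge R^{\GG}=0$, so the density-valued integrand defining $\varphi_{gv}(a^{0},a^{1})$ is zero for all $a^{0},a^{1}\in C_{c}^{\infty}(\GG_{1};\Omega^{\frac{1}{2}})$, and therefore $\varphi_{gv}=0$.

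The vanishing of $R^{\GG}$ is immediate from Proposition \ref{integralprop}, which expresses $R^{\GG}$ as a path integral of the Bott curvature: for each $u\in\GG_{1}$ and $X\in T_{u}\GG_{1}$ one has $R^{\GG}_{u}(X)=\int_{\gamma}R(\dot{\gamma},\tilde{X})$, where $R\in\Omega^{2}(\Fr^{+}(N);\mathfrak{gl}(q,\RB))$ is the curvature form of $\alpha$, $\gamma$ is any leafwise path in $\Fr^{+}(N)$ representing $u$, and $\tilde{X}$ is a vector field along $\gamma$ of the type produced in that proposition. By hypothesis the Bott connection is flat, i.e. $R\equiv 0$, so the integrand $R(\dot{\gamma},\tilde{X})$ vanishes pointwise along every such $\gamma$, forcing $R^{\GG}_{u}(X)=0$. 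As $u$ and $X$ were arbitrary, $R^{\GG}=0$ on $\GG_{1}$, and the corollary follows.

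I expect no genuine obstacle here: all the substantive geometry was already carried out in Proposition \ref{integralprop}, which converts the non-invariance $2$-cocycle $\partial\alpha$ into an integral of $R$; once that is in hand, flatness of $\alpha$ does the rest. The only point worth flagging explicitly is that ``flat Bott connection'' should be read as a Bott connection form with $d\alpha+\alpha\wedge\alpha=0$, so that it is precisely this curvature that appears in, and kills, the relevant integrand.
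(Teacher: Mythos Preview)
Your argument is correct and is exactly the route the paper intends: the corollary is stated as an immediate consequence of Proposition \ref{integralprop}, and you have spelled out precisely why---flatness makes the integrand $R(\dot{\gamma},\tilde{X})$ vanish, hence $R^{\GG}=0$, hence $\alpha^{\GG}\wedge R^{\GG}=0$ and $\varphi_{gv}=0$.
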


Our final task is to demonstrate that the Godbillon-Vey cyclic cocycle coincides with the cocycle obtained from the local index formula for the semifinite spectral triple considered in \cite[Section 4.3]{macr1}.  For this purpose, it will be convenient to have a formula for the Godbillon-Vey cyclic cocycle in terms of a transverse volume form $\omega\in\Omega^{1}(M)$ (that is, a form $\omega$ that is nonvanishing and is identically zero on leafwise tangent vectors).  By the final statement of Proposition \ref{integralprop1}, we know that we can write
\begin{equation}\label{Rgdelta}
R^{\GG} = \delta\, (s\circ \pi^{(1)})^{*}\omega
\end{equation}
for some smooth function $\delta:\GG_{1}\rightarrow\RB$, where $s:\GG\rightarrow M$ is the source and where $\pi^{(1)}:\GG_{1}\rightarrow\GG$ is the projection.  Since $r^{*}\omega$ and $s^{*}\omega$ both annihilate the tangents to the range and source fibres we can formulate the following definition.

\begin{defn}\label{modularfunction}
	Given a transverse volume form $\omega\in\Omega^{1}(M)$, the smooth homomorphism $\Delta:\GG\rightarrow\RB^{*}_{+}$ defined by the equation
	\[
	r^{*}\omega = \Delta\,s^{*}\omega
	\]
	is called the \textbf{modular function} or \textbf{Radon-Nikodym derivative} associated to $\omega$.
\end{defn}

Note that the face maps $\epsilon^{2}_{i}:\GG^{(2)}\rightarrow\GG$ of $\GG$ satisfy
\[
s\circ\epsilon^{2}_{0}(u_{1},u_{2}) = s\circ\epsilon^{2}_{1}(u_{1},u_{2}) = s(u_{2}),\hspace{5mm} s\circ\epsilon^{2}_{2}(u_{1},u_{2}) = r\circ\epsilon^{2}_{0}(u_{1},u_{2}) = r(u_{2})
\]
for all $(u_{1},u_{2})\in\GG^{(2)}$.  Via a mild abuse of notation let us also denote the face maps of $\GG_{1}$ by $\epsilon^{j}_{i}$.  Then the fact that $R^{\GG} = \partial\alpha$ gives $(\epsilon^{2}_{0})^{*}R^{\GG}-(\epsilon^{2}_{1})^{*}R^{\GG}+(\epsilon^{2}_{2})^{*}R^{\GG} = \partial^{2}\alpha = 0$.  Therefore, letting $\pi^{(2)}:\GG_{1}^{(2)}\rightarrow\GG^{(2)}$ denote the projection, we have
\begin{align*}
	0 =& \,\delta(u_{2})(s\circ \pi^{(1)}\circ\epsilon^{2}_{0})^{*}\omega_{(u_{1},u_{2})}-\delta(u_{1}u_{2})(s\circ\pi^{(1)}\circ\epsilon^{2}_{1})^{*}\omega_{(u_{1},u_{2})}\\& + \delta(u_{1})(s\circ\pi^{(1)}\circ\epsilon^{2}_{2})^{*}\omega_{(u_{1},u_{2})}\\ =&\, \delta(u_{2})(s\circ\epsilon^{2}_{0}\circ\pi^{(2)})^{*}\omega_{(u_{1},u_{2})}-\delta(u_{1}u_{2})(s\circ\epsilon^{2}_{1}\circ\pi^{(2)})^{*}\omega_{(u_{1},u_{2})}\\&+\delta(u_{1})(s\circ\epsilon^{2}_{2}\circ\pi^{(2)})^{*}\omega_{(u_{1},u_{2})}\\ =&\, \big(\delta(u_{1})-\delta(u_{1}u_{2})\big)(s\circ\epsilon^{2}_{0}\circ\pi^{(2)})^{*}\omega_{(u_{1},u_{2})}+\delta(u_{1})(r\circ\epsilon^{2}_{0}\circ\pi^{(2)})^{*}\omega_{(u_{1},u_{2})}\\ =&\, \big(\delta(u_{2})-\delta(u_{1}u_{2})+\delta(u_{1})\Delta(u_{2})\big)(s\circ\epsilon^{2}_{0}\circ\pi^{(2)})^{*}\omega_{(u_{1},u_{2})}
\end{align*}
for all $(u_{1},u_{2})\in\GG^{(2)}$.  Hence
\begin{equation}\label{bigdlild}
\delta(u_{1}u_{2}) = \delta(u_{2})+\delta(u_{1})\Delta(u_{2}),\hspace{5mm}(u_{1},u_{2})\in\GG^{(2)}.
\end{equation}
Now the choice of $\omega$ determines a trivialisation $\Fr^{+}(N)\cong M\times\RB^{*}_{+}$ in which we can write elements of $\GG_{1}$ as $(u,x,t)\in \GG\ltimes(M\times\RB^{*}_{+})$.  By the arguments of the second paragraph in the proof of Lemma \ref{gvdensity} we can now write
\[
(\alpha^{\GG}\wedge R^{\GG})_{(u,x,t)} = \frac{\delta(u)}{t}dt\wedge\omega_{x},\hspace{5mm}(u,x,t)\in\GG\ltimes(M\times\RB^{*}_{+})
\]
so that our Godbillon-Vey cyclic cocycle becomes
\[
\varphi_{gv}(a^{0},a^{1}) = -\int_{(x,t)\in M\times\RB^{*}_{+}}\int_{u\in\GG_{x}}a^{0}(u^{-1},\Delta(u)t)\,a^{1}(u,t)\,\frac{\delta(u)}{t}\,\omega_{x}\wedge dt
\]
for $a^{0},a^{1}\in C_{c}^{\infty}(\GG_{1};\Omega^{\frac{1}{2}})$.  In order to compare our formula with that in \cite{macr1}, we will want to assume that $\GG_{1}$ is of the form $\GG_{1} = \Fr^{+}(N)\rtimes\GG$ rather than $\GG\ltimes\Fr^{+}(N)$.  Now these groupoids are of course isomorphic via the map $\Fr^{+}(N)\rtimes\GG\ni(\phi,u)\mapsto(u,u^{-1}\cdot\phi)\in\GG\ltimes\Fr^{+}(N)$, and a function $a\in C_{c}^{\infty}(\GG\ltimes\Fr^{+}(N);\Omega^{\frac{1}{2}})$ identifies under this map with $\tilde{a}\in C_{c}^{\infty}(\Fr^{+}(N)\rtimes\GG;\Omega^{\frac{1}{2}})$ given by
\[
\tilde{a}(\phi,u):=a(u,u^{-1}\cdot\phi),\hspace{5mm}(\phi,u)\in\Fr^{+}(N)\rtimes\GG.
\]
With these identifications, and using the notational convention
\[
a_{u}(\phi):=a(\phi,u)\hspace{7mm}(\phi,u)\in\Fr^{+}(N)\rtimes\GG
\]
for $a\in C_{c}^{\infty}(\Fr^{+}(N)\rtimes\GG;\Omega^{\frac{1}{2}})$, we see that the Godbillon-Vey cyclic cocycle is defined for $a^{0},a^{1}\in C_{c}^{\infty}(\Fr^{+}(N)\rtimes\GG;\Omega^{\frac{1}{2}})$ by the formula
\begin{equation}\label{omegagvcocycle}
\varphi_{gv}(a^{0},a^{1}) = -\int_{(x,t)\in M\times\RB^{*}_{+}}\int_{u\in\GG^{x}}a^{0}_{u}(x,t)a^{1}_{u^{-1}}(u^{-1}\cdot x, \Delta(u^{-1})t)\frac{\delta(u^{-1})}{t}\omega_{x}\wedge dt.
\end{equation}
Let us now recall the cocycle $\phi_{1}$ obtained via the local index formula in \cite[Section 4.3]{macr1}.  In the coordinates we have chosen in this paper, $\phi_{1}$ is given by the equation
\begin{equation}\label{macrcocycle}
\phi_{1}(a^{0},a^{1}) = -(2\pi i)^{\frac{1}{2}}\int_{(x,t)\in M\times\RB^{*}_{+}}\int_{u\in\GG^{x}}a^{0}_{u}(x,t)\,a^{1}_{u^{-1}}(u^{-1}\cdot x,\Delta(u^{-1})t)\,\frac{\partial\log\Delta(u^{-1})}{t}\omega_{x}\wedge dt
\end{equation}
for $a^{0},a^{1}\in C_{c}^{\infty}(\GG_{1};\Omega^{\frac{1}{2}})$.  Thus in order to conclude that the index formula $\phi_{1}$ of \cite{macr1} and the cyclic cocycle $\varphi_{gv}$ of Equation \eqref{omegagvcocycle} coincide (up to the constant multiple $(2\pi i)^{\frac{1}{2}}$), we need only show that $\delta=\partial\log\Delta$.  This will be a consequence of the following fact, which holds for foliations of arbitrary codimension and gives a geometric interpretation (in the non-\'{e}tale setting) for the off-diagonal term appearing in the triangular structures considered by Connes \cite[Lemma 5.2]{cyctrans} and Connes-Moscovici \cite[Part I]{CM}.

\begin{prop}
	Let $(M,\FF)$ be a transversely orientable foliated manifold of codimension $q$, and let $\alpha^{\flat}\in\Omega^{1}(\Fr^{+}(N);\mathfrak{gl}(q,\RB))$ be a Bott connection form.  Let $H:=\ker(\alpha^{\flat})/T\FF_{\Fr^{+}(N)}$ be the horizontal normal bundle determined by $\alpha^{\flat}$, let $V:=\ker(d\pi_{\Fr^{+}(N)})$ be the vertical tangent bundle, and let
	\[
	N_{\Fr^{+}(N)} = V\oplus H \cong \Fr^{+}(N)\times (\mathfrak{gl}(q,\RB)\oplus\RB^{q})
	\]
	be the corresponding decomposition of $N_{\Fr^{+}(N)}$, with $V$ and $H$ trivialised as in the proof of Proposition \ref{bottconnections}.  With respect to this decomposition, for $u\in\GG$ and for any $\phi\in\Fr^{+}(N)_{s(u)}$ the action $u^{\Fr^{+}(N)}_{*}:(N_{\Fr^{+}(N)})_{\phi}\rightarrow (N_{\Fr^{+}(N)})_{u\cdot\phi}$ can be written
	\begin{equation}\label{matrixu}
	u^{\Fr^{+}(N)}_{*} = \begin{pmatrix} \id_{\mathfrak{gl}(q,\RB)} & R^{\GG}_{u} \\ 0 & \id_{\RB^{q}} \end{pmatrix}.
	\end{equation}
\end{prop}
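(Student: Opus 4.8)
The plan is to read off the four blocks of $u^{\Fr^{+}(N)}_{*}$ one at a time, using the decomposition $N_{\Fr^{+}(N)}=V\oplus H$ together with the trivialisations $V\cong\Fr^{+}(N)\times\mathfrak{gl}(q,\RB)$ by fundamental vector fields and $H\cong\Fr^{+}(N)\times\RB^{q}$ by $h\mapsto\phi^{-1}(d\pi_{\Fr^{+}(N)}(h))$ fixed in the proof of Proposition \ref{bottconnections}. The one input I will use about the induced action on normal bundles is its standard description as linearised holonomy: for $(u,\phi)\in\GG_{1}$ and any $X\in T_{(u,\phi)}\GG_{1}$ one has $u^{\Fr^{+}(N)}_{*}([ds_{(u,\phi)}X])=[dr_{(u,\phi)}X]$ in $(N_{\Fr^{+}(N)})_{u\cdot\phi}$, where $[\,\cdot\,]$ denotes the class modulo $T\FF_{\Fr^{+}(N)}$, and likewise for the action of $\GG$ on $N$ (cf. \cite[Section 2.2]{macr1}). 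Since $s\colon\GG_{1}\to\Fr^{+}(N)$ is a submersion, every vector in $T_{\phi}\Fr^{+}(N)$ is of the form $ds_{(u,\phi)}X$, so this does pin down $u^{\Fr^{+}(N)}_{*}$.

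For the first column I would take $Y\in\mathfrak{gl}(q,\RB)$ and its fundamental vector field $\widetilde{V}^{Y}$ for the $\GL^{+}(q,\RB)$-action $(u,\phi)\cdot g=(u,\phi g)$ on $\GG_{1}$. Then $ds_{(u,\phi)}\widetilde{V}^{Y}=V^{Y}_{\phi}$, while because the $\GG$- and $\GL^{+}(q,\RB)$-actions on $\Fr^{+}(N)$ commute, $dr_{(u,\phi)}\widetilde{V}^{Y}=\frac{d}{dt}\big|_{0}(u\cdot\phi)\exp(tY)=V^{Y}_{u\cdot\phi}$. Hence $u^{\Fr^{+}(N)}_{*}([V^{Y}_{\phi}])=[V^{Y}_{u\cdot\phi}]\in V_{u\cdot\phi}$, which identifies with $Y$ again; so the first column is $(\id_{\mathfrak{gl}(q,\RB)},0)^{T}$.

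For the second column I would fix $h\in H_{\phi}$, choose a representative $\tilde{h}\in\ker(\alpha^{\flat})_{\phi}$, and lift it to $X\in T_{(u,\phi)}\GG_{1}$ with $ds_{(u,\phi)}X=\tilde{h}$, so that $u^{\Fr^{+}(N)}_{*}(h)=[dr_{(u,\phi)}X]$. Decomposing this class: its $V$-component, read via the fundamental-field trivialisation (recall that since $\alpha^{\flat}$ is a connection form the $V$-component of a class $[w]\in N_{\Fr^{+}(N)}$ is $\alpha^{\flat}(w)$), is $\alpha^{\flat}(dr_{(u,\phi)}X)=(r^{*}\alpha^{\flat})_{(u,\phi)}(X)$, and since $\alpha^{\flat}(\tilde{h})=0$ this equals $(r^{*}\alpha^{\flat}-s^{*}\alpha^{\flat})_{(u,\phi)}(X)=R^{\GG}_{(u,\phi)}(X)$; its $H$-component, read via $(u\cdot\phi)^{-1}\circ d\pi_{\Fr^{+}(N)}$, is $(u\cdot\phi)^{-1}(d\pi_{\Fr^{+}(N)}(dr_{(u,\phi)}X))$, and using $\pi_{\Fr^{+}(N)}\circ r=r\circ\pi_{\GG_{1}}$ and $\pi_{\Fr^{+}(N)}\circ s=s\circ\pi_{\GG_{1}}$, the linearised-holonomy description of the $\GG$-action on $N$ applied to $d\pi_{\GG_{1}}(X)$, and $u\cdot\phi=u_{*}\circ\phi$ (so $(u\cdot\phi)^{-1}=\phi^{-1}\circ u_{*}^{-1}$), this collapses to $\phi^{-1}(d\pi_{\Fr^{+}(N)}(\tilde{h}))$, the trivialisation $\hat{h}\in\RB^{q}$ of $h$. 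Finally, by Proposition \ref{integralprop1} the scalar $R^{\GG}_{(u,\phi)}(X)$ depends only on the class of $X$ in $(N_{1})_{(u,\phi)}$, which under the isomorphisms $(N_{1})_{(u,\phi)}\cong N_{s(u)}\cong\RB^{q}$ induced by $d(s\circ\pi_{\GG_{1}})$ and $\phi^{-1}$ is exactly $\hat{h}$; writing $R^{\GG}_{u}$ for $R^{\GG}_{(u,\phi)}$ regarded through these identifications as a map $\RB^{q}\to\mathfrak{gl}(q,\RB)$, this makes the second column $(R^{\GG}_{u},\id_{\RB^{q}})^{T}$.

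Putting the columns together gives $u^{\Fr^{+}(N)}_{*}(\hat{v},\hat{h})=(\hat{v}+R^{\GG}_{u}(\hat{h}),\hat{h})$, which is \eqref{matrixu}. The part that needs the most care is exactly the bookkeeping in the second column: one must keep the trivialisations of $H_{\phi}$ and $H_{u\cdot\phi}$ coherent — both expressed relative to the single frame $\phi$ via $(u\cdot\phi)^{-1}=\phi^{-1}u_{*}^{-1}$ — so that the cancellations above are legitimate and the off-diagonal term comes out as $R^{\GG}$ itself rather than a twist of it, the identity $\partial^{2}\alpha^{\flat}=0$ being the shadow of this on the level of the associativity of the action.
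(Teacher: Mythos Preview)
Your proof is correct and follows essentially the same route as the paper's. Both arguments read off the four blocks using: commutativity of the $\GG$- and $\GL^{+}(q,\RB)$-actions for the first column; $\GG$-equivariance of $\pi_{\Fr^{+}(N)}$ together with $(u\cdot\phi)^{-1}=\phi^{-1}u_{*}^{-1}$ for the bottom-right entry; and the identification of the $V$-component with $\alpha^{\flat}(u^{\Fr^{+}(N)}_{*}v_{\phi})-\alpha^{\flat}(v_{\phi}) = (r^{*}\alpha^{\flat}-s^{*}\alpha^{\flat})(X)=R^{\GG}_{u}(X)$ for the off-diagonal term, with Proposition~\ref{integralprop1} supplying well-definedness. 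The only cosmetic difference is that you package the computation through the linearised-holonomy description $u^{\Fr^{+}(N)}_{*}[ds\,X]=[dr\,X]$ and lifts to $T\GG_{1}$ from the outset, whereas the paper works directly with the action and then appeals to Equation~\eqref{intcurvonfr} to recognise the off-diagonal term as $R^{\GG}$; your route makes the role of $r^{*}\alpha^{\flat}-s^{*}\alpha^{\flat}$ slightly more transparent, but the substance is identical.
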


\begin{proof}
	That the top left corner is $\id_{\mathfrak{gl}(q,\RB)}$ follows from the commutativity of the left action of $\GG$ on $\Fr^{+}(N)$ with the right action of $\GL^{+}(q,\RB)$, and the bottom left entry is zero for the same reason.  For the bottom right corner, we note that equivariance of the map $\pi_{\Fr^{+}(N)}:\Fr^{+}(N)\rightarrow M$ with respect to the action of $\GG$ implies that the induced fibrewise isomorphisms $\pi_{\phi}:H_{\phi}\rightarrow N_{\pi_{\Fr^{+}(N)}(\phi)}$ are also equivariant.  Thus if $[v_{\phi}]\in H_{\phi}$, denoting $u_{22}\cdot [v_{\phi}] :=\proj_{H}\big(u_{*}[v_{\phi}]\big)$ and letting $u_{*}:N_{s(u)}\rightarrow N_{r(u)}$ denote the action of $u$ on $N$, we have
	\[
	\big((u\cdot\phi)^{-1}\circ \pi_{u\cdot\phi}\big)(u_{22}\cdot [v_{\phi}]) = \big(\phi^{-1}\circ u_{*}^{-1}\circ u_{*}\circ\pi_{\phi}\big)([v_{\phi}]) = \big(\phi^{-1}\circ \pi_{\phi}\big)(v_{\phi})
	\]
	giving the bottom right entry of \eqref{matrixu}.
	
	Finally we come to the top right entry.  Since a connection form maps vertical vectors to themselves, the top right entry of \eqref{matrixu} is the map which sends $[v_{\phi}]\in H$ to $\alpha^{\flat}(u^{\Fr^{+}(N)}_{*}v_{\phi})$, where $u^{\Fr^{+}(N)}_{*}v_{\phi}$ is any element of $T\Fr^{+}(N)$ representing $u^{\Fr^{+}(N)}_{*}[v_{\phi}]$.  Since $v_{\phi}$ is contained in $\ker(\alpha^{\flat})$, we can equally regard the top right entry as the map which sends $[v_{\phi}]\in(\ker(\alpha^{\flat})/T\FF_{\Fr^{+}(N)})$ to
	\[
	\alpha^{\flat}(u^{\Fr^{+}(N)}_{*}v_{\phi})-\alpha^{\flat}(v_{\phi}),
	\]
	which by \eqref{intcurvonfr} in Proposition \ref{integralprop} coincides with $R^{\GG}_{u}(v_{\phi})$, the well-definedness of which is due to Proposition \ref{integralprop1}.
\end{proof}

Coming back to our codimension 1 foliation $(M,\FF)$, recall \cite[p. 23]{macr1} that the function $\partial\log\Delta$ on $\GG$ is by definition the top right corner of the matrix in Equation \eqref{matrixu} that gives the action of $\GG$ on $N_{\Fr^{+}(N)}$.  Therefore
\[
\delta = \partial\log\Delta
\]
as required, proving the following result.

\begin{thm}
	The Godbillon-Vey cyclic cocycle of Equation \eqref{omegagvcocycle} coincides with the local index formula cocycle in Equation \eqref{macrcocycle} for the semifinite spectral triple considered in \cite[Section 4.3]{macr1}.\qed
\end{thm}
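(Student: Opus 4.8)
The plan is to reduce the theorem to the single scalar identity $\delta = \partial\log\Delta$ on $\GG$, which the preceding proposition has in effect already secured; everything else is a term-by-term comparison of two explicit formulas.

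First I would place Equations \eqref{omegagvcocycle} and \eqref{macrcocycle} side by side. After the identifications of $C_{c}^{\infty}(\GG_{1};\Omega^{\frac{1}{2}})$ with $C_{c}^{\infty}(\Fr^{+}(N)\rtimes\GG;\Omega^{\frac{1}{2}})$ via $\tilde{a}(\phi,u):=a(u,u^{-1}\cdot\phi)$, together with the trivialisation $\Fr^{+}(N)\cong M\times\RB^{*}_{+}$ induced by the chosen transverse volume form $\omega$ and the convention $a_{u}(\phi):=a(\phi,u)$ — all of which are already in force just before the statement — the two integrands agree symbol for symbol except in two respects: the overall constant $(2\pi i)^{\frac{1}{2}}$ present in \eqref{macrcocycle}, and the appearance of the factor $\delta(u^{-1})/t$ in \eqref{omegagvcocycle} where \eqref{macrcocycle} carries $\partial\log\Delta(u^{-1})/t$. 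Hence the theorem is equivalent to the assertion $\varphi_{gv} = (2\pi i)^{-\frac{1}{2}}\phi_{1}$, which follows at once from $\delta = \partial\log\Delta$ as functions on $\GG$.

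Next I would record why $\delta = \partial\log\Delta$. By Equation \eqref{Rgdelta} the function $\delta$ is characterised by $R^{\GG} = \delta\,(s\circ\pi^{(1)})^{*}\omega$, and since $R^{\GG} = \partial\alpha^{\flat}$ depends only on the class of a tangent vector in $N_{1}$ by Proposition \ref{integralprop1}, $\delta$ descends to a function on $\GG$. On the other hand, the preceding proposition computes the action $u^{\Fr^{+}(N)}_{*}$ on $N_{\Fr^{+}(N)} = V\oplus H$ in the global trivialisation of Proposition \ref{bottconnections} and identifies its off-diagonal entry with $R^{\GG}_{u}$; in codimension $1$ this entry, read against $\omega$, is precisely the scalar $\delta(u)$. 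Finally, by the definition recalled from \cite[p.\ 23]{macr1}, the function $\partial\log\Delta$ on $\GG$ is by construction exactly the top right entry of the matrix in Equation \eqref{matrixu}. Comparing the two readings of that entry yields $\delta = \partial\log\Delta$, and hence $\varphi_{gv} = (2\pi i)^{-\frac{1}{2}}\phi_{1}$.

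The only point demanding genuine care is bookkeeping rather than mathematics: one must check that the source/range and left/right action conventions used to write \eqref{omegagvcocycle} match verbatim those underlying the stated form of \eqref{macrcocycle} — in particular that the arguments $(u^{-1}\cdot x,\Delta(u^{-1})t)$ and the direction in which $\partial\log\Delta$ is taken line up correctly. Once $\GG_{1}$ is presented as $\Fr^{+}(N)\rtimes\GG$ and the notational conventions above are fixed, as arranged immediately before the statement, this matching is routine, so I do not expect any substantive obstacle.
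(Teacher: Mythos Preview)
Your proposal is correct and follows essentially the same approach as the paper: both reduce the theorem to the scalar identity $\delta = \partial\log\Delta$ by a term-by-term comparison of \eqref{omegagvcocycle} and \eqref{macrcocycle}, and then obtain that identity by reading the top right entry of the matrix \eqref{matrixu} in two ways---once as $R^{\GG}_{u}$ (hence $\delta$ via \eqref{Rgdelta}) and once via the definition of $\partial\log\Delta$ recalled from \cite[p.~23]{macr1}.
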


	\bibliographystyle{amsplain}
	\bibliography{references}

\providecommand{\bysame}{\leavevmode\hbox to3em{\hrulefill}\thinspace}
\providecommand{\MR}{\relax\ifhmode\unskip\space\fi MR }
\providecommand{\MRhref}[2]{%
  \href{http://www.ams.org/mathscinet-getitem?mr=#1}{#2}
}
\providecommand{\href}[2]{#2}
\begin{thebibliography}{10}

\bibitem{ben7}
M.~T. Benameur, \emph{{Triangulations and the stability theorem for
  foliations}}, {Pacific J. Math.} \textbf{179} (1997), 221--239.

\bibitem{bott1}
R.~Bott, \emph{{On a topological obstruction to integrability}}, {Proc. Sympos.
  Pure Math.} \textbf{16} (1970), 127--131.

\bibitem{bott2}
\bysame, \emph{{Lectures on characteristic classes and foliations}}, {Lectures
  on Algebraic and Differential Topology}, vol. 279, Springer, 1972.

\bibitem{bott3}
\bysame, \emph{{On characteristic classes in the framework of Gelfand-Fuks
  cohomology}}, Ast\'{e}rique \textbf{32--33} (1976), 113--139.

\bibitem{bss}
R.~Bott, H.~Shulman, and J.~Stasheff, \emph{{On the de Rham theory of certain
  classifying spaces}}, {Adv. Math.} \textbf{{20}} (1976), {43--56}.

\bibitem{chen1}
K.~T. Chen, \emph{{Iterated integrals of differential forms and loop space
  homology}}, {Ann. Math.} \textbf{97} (1973), 217--246.

\bibitem{chen2}
\bysame, \emph{{Iterated path integrals}}, {Bull. Amer. Math. Soc.} \textbf{83}
  (1977), 831--879.

\bibitem{folops}
A.~Connes, \emph{{A survey of foliations and operator algebras}}, Proc. Sympos.
  Pure, 1982, pp.~38--521.

\bibitem{cyctrans}
\bysame, \emph{{Cyclic cohomology and the transverse fundamental class of a
  foliation}}, {Geometric methods in operator algebras (Kyoto, 1983)} (H.~Araki
  and E.~G. Effros, eds.), {Longman Sci. Tech., Harlow}, 1986, pp.~52--144.

\bibitem{ncg}
\bysame, \emph{{Noncommutative Geometry}}, {Academic Press}, 1994.

\bibitem{diffcyc}
A.~Connes and H.~Moscovici, \emph{{Differentiable cyclic cohomology and Hopf
  algebraic structures in transverse geometry}}, {Essays on geometry and
  related topics, Vol. 1, 2}, vol.~38, pp.~217--255, {Monogr. Enseign. Math.,
  Geneva}, 2001.

\bibitem{backindgeom}
\bysame, \emph{{Background independent geometry and Hopf cyclic cohomology}},
  arXiv:math/0505475v1, 2015.

\bibitem{CM}
A.~Connes and M.~Moscovici, \emph{{The local index formula in noncommutative
  geometry}}, {GAFA} \textbf{5} (1995), 174 -- 243.

\bibitem{hopf1}
\bysame, \emph{{Hopf algebras, cyclic cohomology and transverse index theory}},
  {Comm. Math. Phys.} \textbf{198} (1998), 199 -- 246.

\bibitem{crainic2}
M.~Crainic and I.~Moerdijk, \emph{Foliation groupoids and their cyclic
  homology}, Adv. Math. \textbf{157} (2001), 177--197.

\bibitem{crainic1}
\bysame, \emph{{Cech-De Rham theory for leaf spaces of foliations}}, {Math.
  Ann.} \textbf{328} (2004), 59--85.

\bibitem{dupont}
J.~Dupont, \emph{{Curvature and Characteristic Classes}}, {Lecture Notes in
  Mathematics}, {Springer-Verlag}, {1973}.

\bibitem{dupont2}
\bysame, \emph{{Simplicial de Rham cohomology and characteristic classes of
  flat bundles}}, {Topology} \textbf{{15}} (1976), 233--245.

\bibitem{gjp}
E.~Getzler, J.~D.~S. Jones, and S.~Petrack, \emph{{Differential forms on loop
  spaces and the cyclic bar complex}}, {Topology} \textbf{30} (1991), 339--371.

\bibitem{goro1}
A.~Gorokhovsky, \emph{{Characters of cycles, equivariant characteristic classes
  and Fredholm modules}}, {Comm. Math. Phys.} \textbf{208} (1999), 1--23.

\bibitem{goro2}
\bysame, \emph{{Secondary characteristic classes and cyclic cohomology of Hopf
  algebras}}, {Topology} \textbf{41} (2002), 993--1016.

\bibitem{gue}
S.~Guelorget, \emph{{Alg\`{e}bre de Weil du groupe lin\'{e}aire, Application
  aux classes caract\'{e}ristiques d'un feuilletage}}, {Springer Lecture Notes
  in Math.}, vol. 484, pp.~179--191, {Springer, Berlin, Heidelberg}, 1975.

\bibitem{sed}
V.~W. Guillemin and S.~Sternberg, \emph{{Supersymmetry and Equivariant de Rham
  Theory}}, {Springer}, 1999.

\bibitem{hh}
J.~Heitsch and S.~Hurder, \emph{{Secondary classes, Weil measures and the
  geometry of foliations}}, J. Differential Geom. \textbf{20} (1984), 291--309.

\bibitem{hs2}
M.~Hilsum and G.~Skandalis, \emph{{Stabilit\'{e} des $C^{*}$-alg\`{e}bres de
  feuilletages}}, {Ann. Inst. Fourier (Grenoble)} \textbf{33} (1983), 201--208.

\bibitem{hurdkat}
S.~Hurder and A.~Katok, \emph{{Secondary classes and transverse measure theory
  of a foliation}}, {Bull. Amer. Math. Soc.} \textbf{11} (1984), 347--350.

\bibitem{chgpd}
C.~Laurent-Gengoux, J.-L. Tu, and P.~Xu, \emph{{Chern-Weil map for principal
  bundles over groupoids}}, Math. Z. \textbf{255} (2007), 451--491.

\bibitem{macr1}
L.~MacDonald and A.~Rennie, \emph{{The Godbillon-Vey invariant in equivariant
  $KK$-theory}}, arXiv:1811.04603, 2018.

\bibitem{gdf}
S.~Morita, \emph{{Geometry of Differential Forms (Translations of Mathematical
  Monographs)}}, American Mathematical Society, 2001.

\bibitem{mosc1}
H.~Moscovici, \emph{{Geometric construction of Hopf cyclic characteristic
  classes}}, {Adv. Math.} \textbf{274} ({2015}), 651--680.

\bibitem{moscorangi}
H.~Moscovici and B.~Rangipour, \emph{Cyclic cohomology of hopf algebras of
  transverse symmetries in codimension 1}, Adv. Math. \textbf{210} (2007),
  323--374.

\bibitem{segal}
G.~Segal, \emph{{Classifying spaces and spectral sequences}}, {Inst. Hautes
  \'{E}tudes Sci. Publ. Math.} \textbf{{34}} (1968), 105--112.

\bibitem{wilson}
T.~Tradler, S.~Wilson, and M.~Zeinalian, \emph{{Loop differential $K$-theory}},
  {Ann. Math. Blaise Pascal} \textbf{22} (2015), 121--163.

\bibitem{wink}
H.~E. Winkelnkemper, \emph{The graph of a foliation}, {Ann. Global Anal. Geom.}
  \textbf{1} (1983), no.~3, 51--75.

\end{thebibliography}
	
\end{document}